\newtheorem{theorem}{Theorem}[section]
\newtheorem{proposition}{Proposition}[section]
\newtheorem{lemma}[theorem]{Lemma}
\newtheorem{corollary}[theorem]{Corollary}
\theoremstyle{definition}
\theoremstyle{remark}
\newtheorem{remark}[theorem]{Remark}
\DeclareMathOperator{\sech}{sech}
\DeclareMathOperator{\Var}{Var}
\DeclareMathOperator{\Cov}{Cov}
\DeclareMathOperator{\Li}{Li}
\numberwithin{equation}{section}
\def\MR#1{\href{http://www.ams.org/mathscinet-getitem?mr=#1}{MR#1}}
\begin{document}

\title{Real roots of random polynomials: asymptotics of the variance}      
   \author{Yen Q. Do}
\address{Department of Mathematics, University of Virginia,  Charlottesville, Virginia 22904, USA}
\email{yqd3p@virginia.edu}

\author{Nhan D. V. Nguyen}

\address{Department of Mathematics, University of Colorado Boulder, Boulder, Colorado 80309, USA \and Department of Mathematics and Statistics, Quy Nhon University, Binh Dinh, Vietnam}
\email{Nhan.Nguyen-1@colorado.edu, nguyenduvinhan@qnu.edu.vn}

\keywords{Random polynomials; real roots; variance; universality; central limit theorem}
\subjclass[2020]{60G50; 60F05; 41A60}
\date{\today}
\begin{abstract} We compute the precise leading asymptotics of the variance of the number of real roots for a large class of random polynomials, where the random coefficients have polynomial growth. Our results apply to many classical ensembles, including the Kac polynomials, hyperbolic polynomials, their derivatives, and any linear combinations of these polynomials. Prior to this paper, such asymptotics were established only for the Kac polynomials in the 1970s, with the seminal contribution of Maslova. The main ingredients of the proof are new asymptotic estimates for the two-point correlation function of the real roots, revealing geometric structures in the distribution of the real roots of these random polynomials. As a corollary, we obtain asymptotic normality for the real roots of these random polynomials, extending and strengthening a related result of O. Nguyen and V. Vu.
\end{abstract} 
\maketitle
\section{Introduction} 
In this paper, we consider the real roots of random polynomials of the form
\begin{equation} \label{e.ranpoly}
    P_n(x)=\sum_{j=0}^n \xi_j c_jx^j,
\end{equation}
where $\xi_0,\dots,\xi_n$ are independent real-valued random variables with unit variance, and $c_0,\dots, c_n$ are deterministic real-valued coefficients.  

Let $N_n$ denote the number of real roots of $P_n$. The distribution of $N_n$ is a classical topic in probability theory with a long and rich history. While leading asymptotics for the expectation of $N_n$ in the large $n$ limit have been established for many classical ensembles of random polynomials, analogous results for the variance of $N_n$ are harder to come by, especially for random polynomials with non-Gaussian coefficients, and this is the main goal of the current paper.

We begin with a brief history of the subject before discussing the class of random polynomials studied in this work. As noted by Todhunter \cite[p. 618]{Tod14}, Waring was already exploring the distribution of ``impossible'' (non-real) roots of typical polynomial equations as early as 1782. However, it took nearly 150 years for concrete estimates to appear. The first such result was given by Bloch and P\'olya in 1932 \cite{BP}, who considered random polynomials with independent coefficients taking values in $\{-1, 0, 1\}$ with equal probability and established a crude upper bound of $O(\sqrt{n})$ for the average number of real roots. Here and throughout, we use standard asymptotic notation as $n \to \infty$. For real sequences $\{\alpha_n\}$ and $\{\beta_n\}$, we write $\alpha_n = O(\beta_n)$, $\alpha_n \ll \beta_n$, or $\beta_n \gg \alpha_n$ to mean $|\alpha_n| \le C\beta_n$ for some constant $C>0$ independent of $n$. Moreover, $\alpha_n = o(\beta_n)$ indicates that $\alpha_n / \beta_n \to 0$ as $n\to \infty$. 

The result of Bloch and P\'olya \cite{BP} has inspired numerous subsequent studies on the (average) number of real roots of random polynomials with independent and identically distributed (i.i.d.) coefficients. This became the {\em primary theme} of much early work, with seminal contributions by Littlewood--Offord \cite{LO1, LO2, LO3}, Kac \cite{K, K1}, and Ibragimov--Maslova \cite{IM1, IM2, IM3}. In particular, for random polynomials with i.i.d. centered Gaussian coefficients, Kac \cite{K} derived an exact formula for $\mathbb E[N_n]$ and showed that
\begin{equation} \label{eq1.2}
    \mathbb E[N_n]  =\frac{2}{\pi}\log n+o(\log n).
\end{equation}
A key component of Kac’s method is a formula for the mean density of real roots, also known as the one-point correlation function (see, e.g., \cite{TV}; see also \cite{EK} for a geometric interpretation). The Kac formula and its generalization, the Kac--Rice formula, are now among the most fundamental tools for studying the real roots of Gaussian random polynomials. Moreover, random polynomials with i.i.d. coefficients are commonly referred to as {\em Kac polynomials} \cite{TV}.

The asymptotic formula \eqref{eq1.2} has been extended to Kac polynomials with non-Gaussian coefficients; see \cite{EO, IM1, IM3, K1, NNV, St}. Notably, Nguyen--Nguyen--Vu \cite{NNV} improved the error term in \eqref{eq1.2} to $O(1)$ under broad assumptions on the coefficient distribution. Furthermore, for several interesting classes of coefficient distributions, it was shown in \cite{DHV} that $\mathbb E [N_n]=\frac 2{\pi}\log n+C+o(1)$, where the constant $C$ may depend on the coefficient distribution. For Gaussian polynomials, Wilkins \cite{W} derived a complete asymptotic expansion of $\mathbb E[N_n]$.

In the 1970s, Maslova \cite{M, M1} initiated {\em two new themes} to the subject: the leading asymptotics for the variance $\Var[N_n]$, 
\begin{equation} \label{e.kacvar}
    \Var[N_n] =\left[\frac{4}{\pi}\left(1-\frac{2}{\pi}\right)+o(1)\right]\log n,
\end{equation}
and the convergence (in distribution) of the standardization of $N_n$ to a standard Gaussian,  
\begin{equation}\label{e.kacclt}
    \frac{N_n -\mathbb E[N_n]}{\sqrt{\Var[N_n]}} \xrightarrow{d} \mathcal N(0,1).
\end{equation}
The convergence in \eqref{e.kacclt} is often referred to as a central limit theorem (CLT), or the asymptotic normality of the real roots of $P_n$. Maslova's work has inspired numerous authors to explore similar results for other classes of random polynomials and random functions, in both Gaussian and non-Gaussian settings; see, e.g., \cite{AL21, AADL, AADL1, ADL16, AL13, BCP19, BS, BD97, BD04, D15, DNN22, DV20,  F, Gas23,  GW11,  LP21, LP22, NV22A, NV22D, STB83, SM}.  The current work is part of these themes.

While earlier studies of the real roots of random polynomials with i.i.d. coefficients were largely motivated by mathematical curiosity, it turns out that there is a deeper connection to the theory of random point processes in statistical physics. In the beautiful exposition \cite{HKPV09}, Hough et al. examined random point processes that are invariant under the group of isometries of the underlying Riemann surfaces. In two dimensions, this classification leads to three classical ensembles of random point processes, corresponding to the isometry groups of the complex plane $\mathbb C$, the Riemann sphere $\mathbb S^1 = \mathbb C \cup \{\infty\}$, and the hyperbolic disk $\mathbb D=\{|z|<1\}$:
\begin{enumerate}
    \item random flat point processes, given by the complex zeros of the random flat (or Weyl) series,  $\sum_{j=0}^\infty\frac 1 {\sqrt{j!}} \xi_j z^j$, where $\{\xi_j\}_{j\ge 0}$ are independent standard complex Gaussian random variables;
    \item random elliptic point processes, given by the complex zeros of the random elliptic polynomials, $\sum_{j = 0}^n \sqrt{\binom{n}{j}} \xi_j z^j$, where $\{\xi_j\}_{j\ge 0}$ are independent standard complex Gaussian random variables; and
    \item random hyperbolic point processes, given by the complex zeros of the random hyperbolic series, $\sum_{j=0}^\infty \sqrt{\frac{L(L+1)\dots (L+j-1)}{j!}} \xi_j z^j$, where $\{\xi_j\}_{j\ge 0}$ are independent standard complex Gaussian random variables, and $L>0$ is a fixed parameter.
\end{enumerate}

A central theme in the field is to consider the real analogues of these random analytic functions, where the $\xi_j$ are real-valued random variables, and to study the distribution of the real roots of their polynomial approximations (except in the elliptic case, where the model is already polynomial). In particular, Kac polynomials arise as polynomial approximations of the random hyperbolic series with parameter $L=1$. Furthermore, higher-dimensional generalizations of random elliptic polynomials have been studied; see \cite{AADL, AADL1, K93, SS93}.

We now recall the relevant existing results on variance asymptotics and asymptotic normality for the real roots of the above ensembles of random polynomials. In what follows, the Gaussian setting refers to the case where all random coefficients $\xi_j$ are standard Gaussian. For random flat polynomials, variance asymptotics and CLTs for the number of real roots were established in the Gaussian setting in a joint work of V. Vu and the first author \cite{DV20}. For Gaussian elliptic polynomials, the leading asymptotics for variances were computed by Bleher--Di \cite{BD97} (see also \cite{N}), and the CLT was proved by Dalmao \cite{D15} (see also \cite{AL21} and \cite{N}). These results were later extended to higher dimensions in \cite{AADL, AADL1}. To our knowledge, non-Gaussian extensions of \cite{AL21, AADL, AADL1, BD97, D15, DV20} remain open problems. As mentioned above, Maslova \cite{M, M1} established both variance asymptotics and CLTs for Kac polynomials under broad conditions on the coefficients. For random hyperbolic polynomials and their derivatives, CLTs were proved for a wide class of coefficient distributions in Nguyen--Vu \cite{NV22D}. However, the methods in \cite{NV22D} do not yield precise leading asymptotics for the variance, which is addressed in the present paper. We also highlight related results for random functions: variance asymptotics have been studied in \cite{BCP19, DNN22, Gas23, GW11,  LP21, LP22}, and CLTs in \cite{AL21, ADL16, AL13, DNNP21}.

\subsection{Statement of main results}
As discussed above, among the three classical ensembles of random polynomials, random hyperbolic polynomials remain the ensemble for which the precise leading asymptotics for the variance of the number of real roots are not known (even for the Gaussian setting), other than the special case of Kac polynomials, treated by Maslova in the 1970s. The current paper addresses this open problem. Furthermore, our methods work for a very general class of random polynomials, which includes not only hyperbolic polynomials but also their derivatives and any linear combination of these polynomials, and our results also apply to both Gaussian and non-Gaussian settings.\footnote{Aside from Maslova’s result from five decades ago, the only known non-Gaussian results for variance asymptotics pertain to random trigonometric functions; see \cite{BCP19, DNN22}.}

We first state our result for random hyperbolic polynomials. To this end, we fix some notation. Given $\tau>-1/2$, define
\begin{equation} \label{ftau}
    f_{\tau}(u):=\left(\sqrt{1-\Delta_{\tau}^2(u)}+\Delta_{\tau}(u) \arcsin \Delta_{\tau}(u)\right)\Sigma_{\tau}(u)-1,
\end{equation}
where 
\begin{align} \label{del}  
    \Delta_{\tau}(u)&:=u^ {\tau+1/2}\frac{u(1-u^{2\tau+1})-(2\tau+1)(1-u)}{1-u^{2\tau+1}-(2\tau+1)u^{2\tau+1}(1-u)},\\
    \notag    \Sigma_{\tau}(u)&:=\frac{1-u^{2\tau+1}-(2\tau+1)(1-u)u^{2\tau+1}}{(1-u^{2\tau+1})^{3/2}},
\end{align}
and let
\begin{equation} \label{kappa}
    \kappa_\tau:=\left(\frac{2\tau+1}{\pi}\int_{0}^{\infty}f_{\tau}(\sech^2v)dv+\frac{\sqrt{2\tau+1}}{2}\right)\frac{1}{\pi}.
\end{equation}
Some basic properties of $f_\tau$, including its integrability, are collected in Lemma \ref{l.f-fact}.

\begin{theorem}[Variance asymptotics for real roots of random hyperbolic polynomials]\label{t.hyperbolic-main} Let $\xi_0,\dots,\xi_n$ be real-valued independent random variables with zero mean, unit variance, and uniformly bounded $(2+\varepsilon)$-moments for some $\varepsilon>0$. Fix $L>0$ and consider the random hyperbolic polynomial
    \[
    P_{n,L}(x)=\xi_0 +\sqrt L \xi_1 x+ \dots  +\sqrt{\frac{L(L+1)\cdots (L+n-1)}{n!}} \xi_n x^n.
    \]
For any $k\ge 0$, let $N_{n,k}$ be the number of real roots of the $k$th derivative of $P_{n,L}$ (so $k=0$ means $P_{n,L}$ itself). Then for $\tau=k+\frac {L-1}2$, we have
    \[
    \Var[N_{n,k}]=\left[2\kappa_\tau+\frac 2\pi \Big(1-\frac 2\pi\Big)+o(1)\right]\log n.
    \]
\end{theorem}
\begin{remark} When $L=1$, the random hyperbolic polynomial $P_{n, L}$ becomes a random Kac polynomial, so Theorem \ref{t.hyperbolic-main}  applies to the derivatives of Kac polynomials, which are novel even in the Gaussian setting.  Note that for $L=1$ and $k=0$, we have $\tau=0$ and
\begin{align*}
    \int_0^\infty f_\tau(\sech^2v)dv &=\int_0^\infty \left(\tanh^2v+\tanh v\sech v \arcsin(\sech v)-1\right)dv=\frac{\pi}{2}-2,
\end{align*}
thus, it follows from \eqref{kappa} that 
    \[
    \kappa_\tau=\frac{1}{\pi}\left(1-\frac{2}{\pi}\right),
    \]
and this recovers Maslova's result given in \eqref{e.kacvar} for the Kac polynomials. 
\end{remark}

Theorem \ref{t.hyperbolic-main} is a special case of the following more general result for random polynomials with coefficients having polynomial asymptotics. Below, we collect the assumptions for this slightly more technical theorem.  We assume that there are fixed positive constants  $C_0$, $C_1$, $C_2$, $N_0$, $\varepsilon$, and a fixed constant $\tau>-1/2$,  such that
\begin{enumerate}
\renewcommand{\labelenumi}{\theenumi}
\renewcommand{\theenumi}{(A\arabic{enumi})}
    \item \label{A1} $\xi_0,\dots,\xi_n$ are independent real-valued random variables,  with $\mathbb E[\xi_j]=0$ for $j\ge N_0$, $\Var[\xi_j]=1$ for $j\ge 0$, and $\sup_{0\le j\le n}\mathbb E[|\xi_j|^{2+\varepsilon}]<C_0$, and
    \item \label{A2} $c_0,\dots, c_n$ are real coefficients such that for $N_0\le j\le n$,  we have\footnote{The $o_j(1)$ notation in \eqref{e.polyasymp} means that this term can be bounded by some $o_j$ independent of $n$ such that $\lim_{j\to\infty} o_j=0$.}
\begin{equation}\label{e.polyasymp}
    |c_j|=C_1 j^\tau(1+o_j(1)),  
\end{equation}
and for $0\le j <N_0$, we have 
    \[
    |c_j|\le C_2.
    \]
\end{enumerate}
We emphasize that the coefficients $c_j$ may vary with $n$; otherwise, the second condition in \ref{A2} would be superfluous.

\begin{theorem}[Variance asymptotics for real roots of generalized Kac polynomials] \label{t.main} Assume that the polynomial $P_n$ defined by \eqref{e.ranpoly} satisfies conditions \ref{A1} and \ref{A2}. Then,
    \[
    \Var[N_n]=\left[2\kappa_\tau +\frac{2}{\pi}\left( 1-\frac{2}{\pi}\right)+o(1)\right]\log n,
    \]
where the implicit constants in the $o(1)$ term depend only on $N_0$, $C_0$, $C_1$, $C_2$, $\varepsilon$, $\tau$, and the decay rate of $o_j(1)$ in condition \ref{A2}.  Furthermore,  $N_n$ satisfies the CLT; that is, the following convergence in distribution to a normalized Gaussian holds as $n\to\infty$:
    \[
    \frac{N_n-\mathbb E[N_n]}{\sqrt{\Var[N_n]}} \xrightarrow{d} \mathcal N(0,1).
    \]
\end{theorem}

Note that the leading asymptotics 
$\left[2\kappa_\tau +\frac{2}{\pi}\left( 1-\frac{2}{\pi}\right)+o(1)\right]\log n$
naturally decomposes into two terms. The first, $[2\kappa_\tau+o(1)]\log n$, reflects the contribution from real roots in the interval $(-1,1)$, while the second, $\left[\frac{2}{\pi}\left( 1-\frac{2}{\pi}\right)+o(1)\right]\log n$, arises from real roots outside this interval. We will also establish that the real roots inside and outside $(-1,1)$ are asymptotically independent. Consequently, the growth parameter $\tau$ affects only the distribution of real roots within $(-1,1)$, and $\log n \ll \Var[N_n]$. It then follows that the second part of Theorem~\ref{t.main}, concerning the asymptotic normality of the number of real roots of $P_n$, is a consequence of the conditional CLT of O. Nguyen and V. Vu \cite[Theorem 1.2]{NV22D} for real roots of generalized Kac polynomials.

\begin{proposition}[\cite{NV22D}] 
Let $P_n$ be defined by \eqref{e.ranpoly}, where $\xi_0,\dots,\xi_n$ satisfy condition \ref{A1}. Assume that for some fixed positive constants $C_1$, $C_2, $ $C_3$, and $N_0$, and a constant $\tau>-1/2$, we have
\begin{equation} \label{e.polygrowth}
\begin{cases}
    C_1j^{\tau}\le |c_j|\le C_2j^{\tau}&\text{if}\quad N_0\le j\le n,\quad \text{and}\\
    c_j^2\le C_3&\text{if}\quad 0\le j<N_0.
\end{cases}
\end{equation}
Further, assume
\begin{equation}\label{e.lowbound}
    \log n \ll \Var[N_n].
\end{equation}
Then $N_n$ satisfies the CLT.
\end{proposition}

The elegant proof in \cite{NV22D}  employs universality to reduce to the Gaussian case (i.e., i.i.d. standard Gaussian $\xi_j$) with roots restricted to a core region, and then adapts Maslova's method \cite{M1}, which approximates the number of roots by a sum of independent random variables.

In \cite{NV22D}, the lower bound \eqref{e.lowbound} was established for a class of random polynomials satisfying the following additional condition
\begin{equation} \label{e.OV}
    \frac{|c_j|}{|c_n|}-1=O\left(e^{-(\log\log n)^{1+\varepsilon}}\right),\quad  n-ne^{-\log^{1/5}n}\le j \le n-e^{\log^{1/5}n}.
\end{equation}
This condition was later verified for Kac polynomials, their derivatives, and hyperbolic polynomials, leading to the CLTs for the real roots of these polynomials in \cite{NV22D}.  Our asymptotic estimates for the variance of the real roots in these cases strengthen these results, as they provide details about the denominator of $\frac{N_n-\mathbb E[N_n]}{\sqrt{\Var[N_n]}}$.

We also note that the class of random polynomials in Theorem~\ref{t.main} and the class of generalized Kac polynomials satisfying Nguyen--Vu's condition \eqref{e.OV} are substantially different. For the convenience of the reader, we include an example in Appendix \ref{a.example} that satisfies \eqref{e.polyasymp} but does not satisfy \eqref{e.OV}.

\begin{remark} The assumption $\tau > -1/2$ plays a critical role throughout our proof. It would be interesting to explore generalized Kac polynomials in the regime $\tau \leq -1/2$. In particular, it remains an open question whether the methods developed in this paper can be extended to estimate $\Var[N_n]$ in that setting, a direction we leave for future investigation. For a recent comprehensive overview of these polynomials, we refer the reader to the work of Krishnapur, Lundberg, and Nguyen \cite{KLN}, which provides asymptotic results for the expected number of real roots and offers insights into bifurcating limit cycles.
\end{remark}

\begin{remark} Under the same assumption as in  Theorem~\ref{t.main},  it was shown in \cite{DNV} (see also \cite{D}) that 
\begin{equation} \label{eq1.5}
    \mathbb E[N_n] = \frac{1+\sqrt{2\tau+1}}{\pi}\log n +o(\log n).
\end{equation}
For generalized Kac polynomials satisfying \eqref{e.polygrowth} (but not necessarily \eqref{e.polyasymp}), it was also shown in \cite{DNV} that $\mathbb E [N_n]$ grows logarithmically in $n$. The asymptotic formula \eqref{eq1.5} in special cases where the $\xi_j$ are Gaussian and $c_j=j^{\tau}$ for some $\tau>0$ was previously considered in \cite{D1, D2, S,SGF,SM0, SM}.  We note that \eqref{eq1.5} also recovers Maslova's result in \cite[Theorem 2]{M} for the first derivatives of Kac polynomials.
\end{remark}

\begin{remark} Generalized Kac polynomials have also attracted attention in the mathematical physics community. In particular, Schehr and Majumdar \cite{SM0, SM} established a connection between the persistence exponent of the diffusion equation with random initial conditions and the probability that a certain generalized Kac polynomial has no real root in a given interval. A more comprehensive treatment was later provided by Dembo and Mukherjee \cite{DM}, who derived general criteria for the continuity of persistence exponents for centered Gaussian processes. These criteria were then applied to study gap probabilities for both the real roots of random polynomials and the zero-crossings of solutions to the heat equation with Gaussian white noise initial data.
\end{remark}

\begin{remark} As a numerical illustration, consider the polynomial $P_n(x)=\sum_{j=1}^n \xi_j j x^j$, which can essentially be viewed as the derivative of a Kac polynomial. By Theorem~\ref{t.main}, we have
    \[
    \Var[N_n]= C\log n +o(\log n),
    \]
where
    \[
    C=2\kappa_1+ \frac{2}{\pi}\left(1-\frac{2}{\pi}\right) \approx 0.575737845.
    \]
Figure~\ref{fig1} and Figure~\ref{fig2} present numerical simulations supporting this theoretical result. Moreover, the numerical evidence in Figure~\ref{fig3} appears to support the conjecture that $\Var[N_n] - C\log n$ converges to a finite limit as $n\to \infty$, with the limiting value potentially depending on the distribution of $\xi_j$.

\begin{figure}[htbp]
\centering
\includegraphics[scale=1.0]{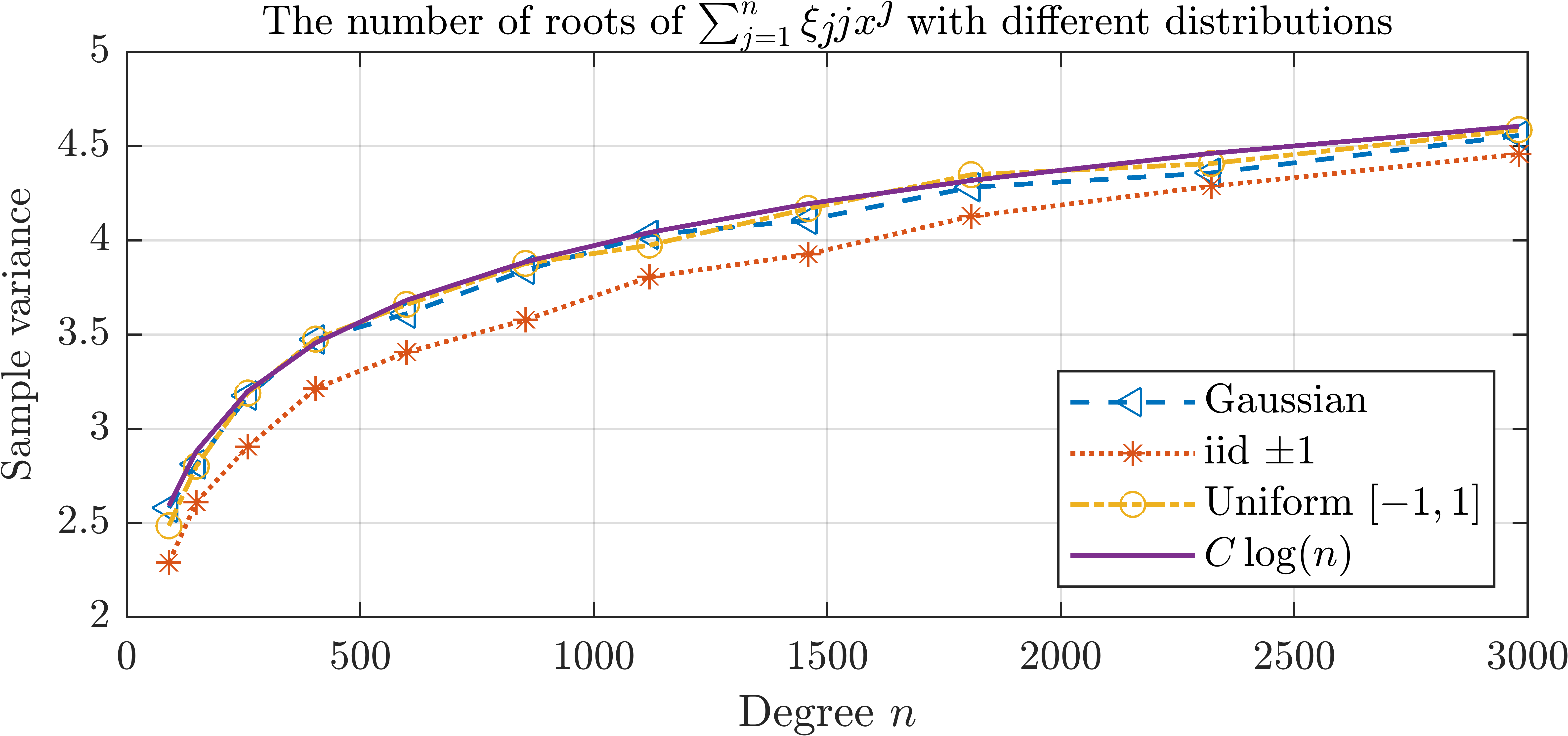}
\caption{Plot of sample variances versus the degree $n$.}
\label{fig1}
\end{figure}

\begin{figure}[htbp]
\centering
\includegraphics[scale=1.0]{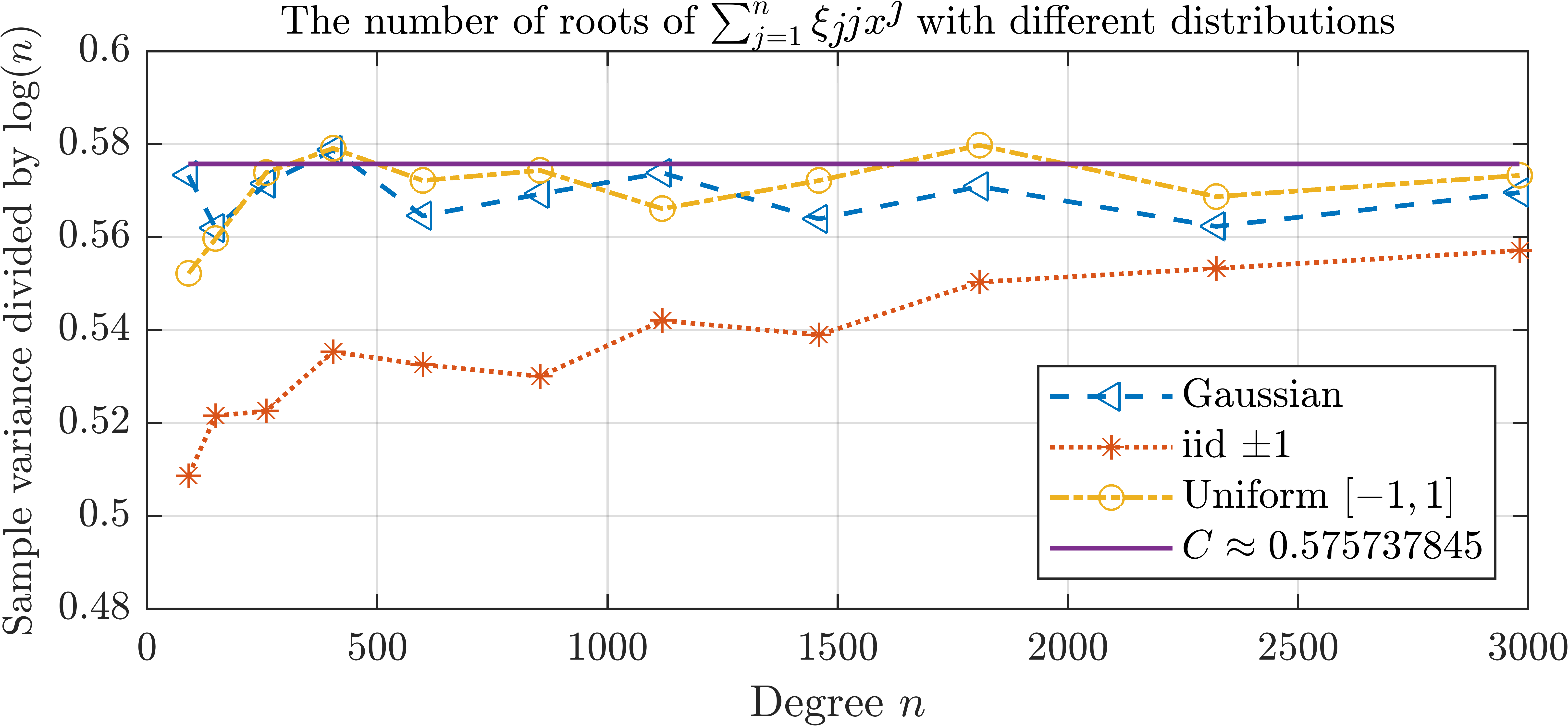}
\caption{Sample variances divided by $\log(n)$ are  approaching $C$.}
\label{fig2}
\end{figure}

\begin{figure}[htbp]
\centering
\includegraphics[scale=1.0]{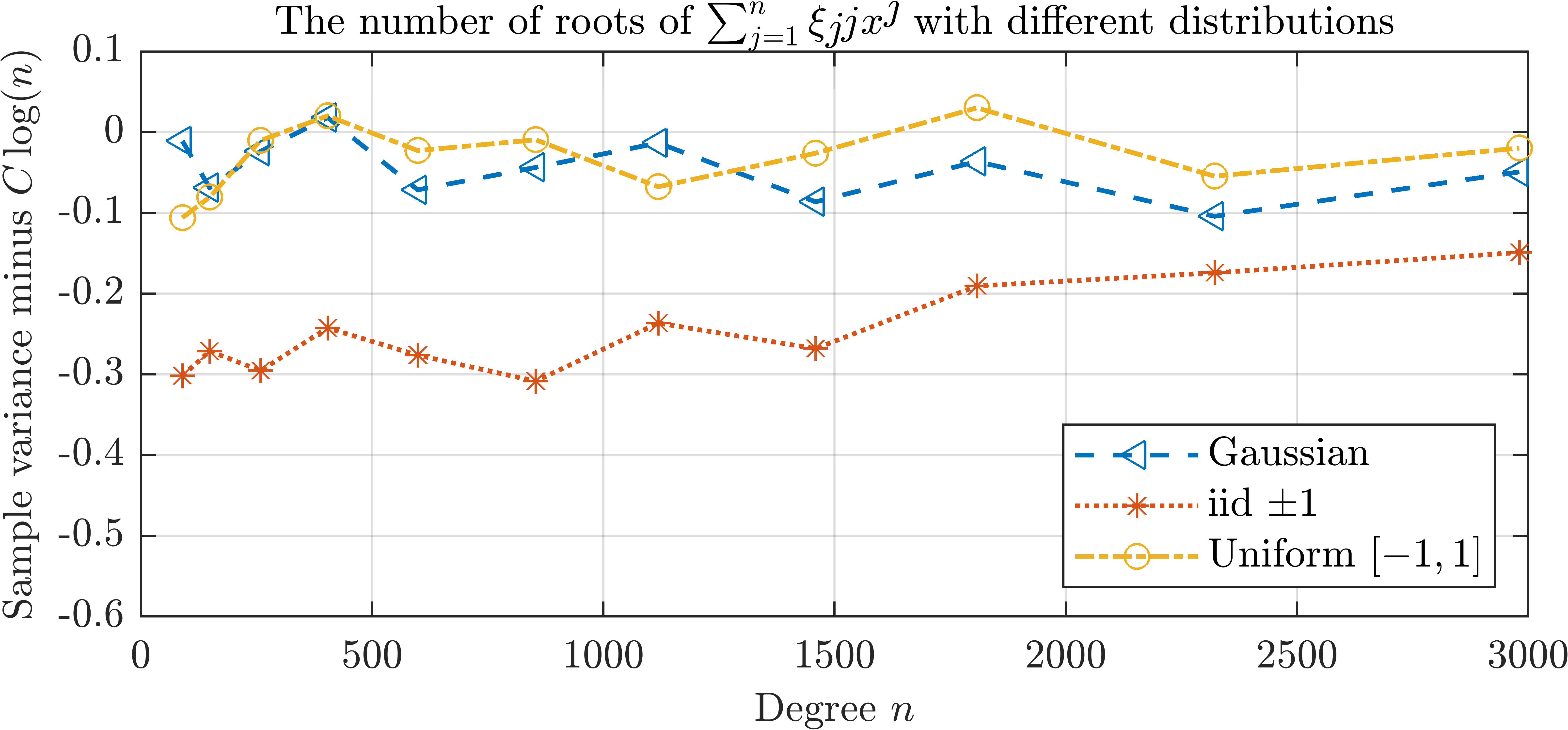}
\caption{We subtract $C\log(n)$ from the sample variances, and the curves seem to converge to different values.}
\label{fig3}
\end{figure}
\end{remark}

\subsection{Main ideas of the proof}
We now discuss the main ideas behind our proof. To prove Theorem \ref{t.main}, our starting point is the universality argument of Nguyen and Vu in \cite{NV22D}, which reduces the problem to the Gaussian case.  However, our consideration of the Gaussian setting differs from the Nguyen--Vu argument. In \cite{NV22D}, the authors used a novel swapping argument to compare the Gaussian version of $P_n$ with a classical Kac polynomial (based on assumption \eqref{e.OV} and the reciprocal formulation of $P_n$) and deduced the lower bound \eqref{e.lowbound} from Maslova's variance estimate for the number of real roots in $[-1,1]$. This elegant approach, however, only accounts for the real roots of $P_n$ outside $[-1,1]$ and the consequential lower bound in \cite{NV22D} for the variance is unfortunately not sharp. 

In contrast, our approach avoids treating Maslova’s estimates as black boxes. The main ingredients in our evaluation of the variance for the Gaussian case are new asymptotic estimates for the two-point correlation function of the real roots of $\widetilde P_n$ (the Gaussian analogue of $P_n$), as developed in Section~\ref{sec3}. These estimates reveal some underlying hyperbolic geometry inside the distribution of the real roots of generalized Kac polynomials (especially under hypothesis \eqref{e.polyasymp}): the asymptotics depend on a certain notion of pseudo-hyperbolic distance between the real roots. One of the main challenges in the proof is the fact that there are various instances when one has to find the leading asymptotics for an algebraic sum where the asymptotics of the summands may negate each other, particularly when the mentioned pseudo-hyperbolic distance is very small. A separate consideration is often required in such situations, where a geometric property of this distance (Lemma~\ref{l.ez}) becomes useful in the proof. 

We would also like to mention that Maslova's proof in \cite{M} for the Kac polynomials relies on very delicate computations for the variance of real roots inside carefully chosen local intervals. Extending such explicit computations to the setting of the current paper, where there are no closed-form expressions for the coefficients $c_j$, appears to be quite challenging. Nevertheless, the estimates for the correlation functions in the current paper can certainly be utilized to derive asymptotic estimates for the variances of the number of real roots inside local intervals, and we include some local estimates in Theorem~\ref{t.maingauss}.

The remainder of the paper is organized as follows. In Section~\ref{sec2}, we recall the universality method from \cite{NV22D}, which allows a reduction to the Gaussian case. Section~\ref{sec3} presents the estimates for correlation functions, and the proof of the Gaussian case is given in Section~\ref{s.mainthmproof}.

\subsection{Notational conventions}
For any subset $I$ of $\mathbb R$, where $I$ may depend on $n$, we denote by $N_n(I)$ the number of real roots of $P_n(x)$ in $I$ (counted with multiplicity). In particular, $N_n = N_n(\mathbb R)$ is the total number of real roots. These random variables may take values in $\{0,1,...,n\}$.

\section{Reduction  to the Gaussian case} \label{sec2}
We begin by recalling the universality method developed in \cite{NV22D} for generalized Kac polynomials. The core idea of this method is to show that the limiting laws of real roots are largely independent of the distribution of the coefficients $\xi_j$. This is achieved via a replacement principle, which compares correlation functions of two random functions whose log-magnitudes are similarly distributed and satisfy certain anti-concentration conditions. Once universality is established, it suffices to prove the desired law in the Gaussian case, where powerful tools such as the Kac--Rice formula and properties of Gaussian processes are available. This method is particularly effective for analyzing local statistics, such as root densities and correlation functions in regions where the expected number of real roots is $O(1)$. However, since the framework is inherently local, extending it to global laws requires several technically involved steps to link local behavior to global statistics. To this end, the authors of \cite{NV22D} partition the real line into two regions: a core region $\mathcal{I}_n$, which contains the majority of real roots, and the complement $\mathbb{R} \setminus \mathcal{I}_n$, which contains only a negligible number.

To adapt this approach to our setting, we define the core region $\mathcal{I}_n$ for $P_n$ as follows. Fix $d \in (0, 1/2)$, and let
    \[
    d_n:=\exp(\log^{\frac d4}n),\quad a_n:=d_n^{-1},\quad b_n:=n^{-1}d_n,\quad \text{and}\quad I_n:=[1-a_n,1-b_n].
    \]
We then let
    \[
    \mathcal I_n=I_n\cup (-I_n) \cup I_n^{-1} \cup (- I_n^{-1}),
    \]
where for any given set $S$, we define $-S:=\{-x: x\in S\}$ and $S^{-1}:=\{x^{-1}: x\in S\}$.

Let $\widetilde{P}_n(x)$ denote the Gaussian analogue of $P_n(x)$, defined by
    \[
    \widetilde{P}_n (x):=\sum_{j=0}^n\widetilde{\xi}_j c_jx^j,
    \]
where $\widetilde{\xi}_0, \dots, \widetilde{\xi}_n$ are real-valued, independent standard Gaussian random variables, and the coefficients $c_0, \dots, c_n$ satisfy assumption \ref{A2}. For $S\subset \mathbb R$, we denote by $\widetilde{N}_n (S)$ the number of real roots of $\widetilde{P}_n(x)$ inside $S$. 

As shown in \cite[Theorem 2.1 and Corollary 2.2]{NV22D}, the distribution of the roots of $P_n$ closely approximates that of its Gaussian analogue $\widetilde{P}_n$ within the core region $\mathcal I_n$.
\begin{proposition}[\cite{NV22D}] \label{pro.u}
Let $P_n$ be defined as in \eqref{e.ranpoly}, where $\xi_0,\dots,\xi_n$ satisfy condition \ref{A1}, and $c_0,\dots, c_n$ satisfy \eqref{e.polygrowth}. There exist positive constants $c$ and $\lambda$ such that for sufficiently large $n$ and every function $F:\mathbb R\to \mathbb R$ with derivatives up to order three bounded by 1, one has
    \[
    |\mathbb E[F(N_n(\mathcal I_n))]-\mathbb E[F(\widetilde{N}_n(\mathcal I_n))]|\le ca_n^{\lambda}+cn^{-\lambda}.
    \]
In particular, for any fixed integer $k\ge 1$, 
    \[
    |\mathbb E[N_n^k(\mathcal I_n)]-\mathbb E[\widetilde{N}_n^k(\mathcal I_n)]|\le ca_n^{\lambda}+cn^{-\lambda}, 
    \]
and
    \[
    |\Var[N_n(\mathcal I_n)]-\Var[\widetilde{N}_n(\mathcal I_n)]|\le ca_n^{\lambda}+cn^{-\lambda}.
    \]
\end{proposition}

Moreover, as a consequence of \cite[Proposition 2.3]{NV22D}, the contribution from outside the core region $\mathbb R\setminus \mathcal I_n$ is negligible.

\begin{proposition}[\cite{NV22D}] \label{pro.out}
Let $P_n$ be defined as in \eqref{e.ranpoly}, where $\xi_0,\dots,\xi_n$ satisfy condition \ref{A1}, and $c_0,\dots, c_n$ satisfy \eqref{e.polygrowth}. Then, for any integer $k\ge 2$, there exists a constant $c>0$ such that
    \[
    \mathbb E[N_n^k(\mathbb R\setminus \mathcal I_n)] \le c(\log a_n)^{2k}.
    \]
\end{proposition}

With the aid of Propositions \ref{pro.u} and \ref{pro.out}, Theorem \ref{t.main} will be proved once we prove the following theorem for the Gaussian case.

\begin{theorem}[Gaussian case] \label{t.maingauss} Fix $S_n\in \{-I_n, I_n\}$. As $n\to \infty$, it holds that
\begin{align*}
    \Var[\widetilde{N}_n(S_n)] &=\left(\kappa_\tau+o(1)\right)\log n,\\
    \Var[\widetilde{N}_n(S_n^{-1})] &=\bigg[\frac{1}{\pi}\left(1-\frac{2}{\pi}\right)+o(1)\bigg]\log n,
\end{align*}
and
    \[
    \Var[\widetilde{N}_n(\mathcal I_n)]=\bigg[2\kappa_\tau +\frac{2}{\pi}\left( 1-\frac{2}{\pi}\right)+o(1)\bigg]\log n,
    \]
where $\kappa_\tau$ is given by \eqref{kappa} and the implicit constants in the $o(1)$ terms depend only on the constants $N_0$, $C_1$, $C_2$, $\tau$, and the rate of decay of $o_j(1)$ in condition \ref{A2}.
\end{theorem}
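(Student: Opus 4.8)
The plan is to compute $\Var[\widetilde N_n(S_n)]$ via the Kac--Rice formulas for the first and second intensities of the real zeros of the Gaussian polynomial $\widetilde P_n$, and then to identify the leading $\log n$ term in each of the three cases. Recall that for a Gaussian field the variance decomposes as
\[
\Var[\widetilde N_n(S)] = \mathbf E[\widetilde N_n(S)] + \int_{S}\int_{S}\bigl(\rho_2(x,y)-\rho_1(x)\rho_1(y)\bigr)\,dx\,dy,
\]
where $\rho_1,\rho_2$ are the one- and two-point correlation functions. The first term $\mathbf E[\widetilde N_n(S_n)]$ is already understood from \cite{DNV}: on $S_n=\pm I_n$ it contributes $\tfrac{\sqrt{2\tau+1}}{2\pi}\log n + o(\log n)$, while on $S_n^{-1}$ it contributes $\tfrac{1}{2\pi}\log n+o(\log n)$ (this is the ``$1$'' half of $\tfrac{1+\sqrt{2\tau+1}}{\pi}\log n$ localized to the relevant dyadic range). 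So the real work is the double integral of the \emph{truncated two-point function} $\rho_2-\rho_1\rho_1$, and this is exactly where the correlation-function estimates promised in Section~\ref{sec3} enter.

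The key steps, in order: (1) Change variables to the hyperbolic-type coordinate natural for this ensemble. On $I_n=[1-a_n,1-b_n]$ write $x=\operatorname{sech}^2 v$-type substitutions (or $x=e^{-s/n}$ and similar), so that the local scale of the zero process is normalized; the parameter $\tau$ then enters through the functions $\Delta_\tau,\Sigma_\tau$ of the ratio $u=y/x$ (or of a pseudo-hyperbolic distance between $x$ and $y$). (2) Insert the asymptotic expansions of $\rho_1$ and $\rho_2$ from Section~\ref{sec3}: for $|x-y|$ much larger than the local scale the pair correlation factorizes, $\rho_2(x,y)\approx\rho_1(x)\rho_1(y)$, so the integrand decays; for $x,y$ at comparable hyperbolic distance the difference $\rho_2-\rho_1\rho_1$ is, after rescaling, governed by a universal profile whose integral over the relative coordinate produces precisely $f_\tau(\operatorname{sech}^2 v)$. (3) Integrate out the relative coordinate to get a density in the ``center'' coordinate that is constant (in the bulk) times $\tfrac{2\tau+1}{\pi}\int_0^\infty f_\tau(\operatorname{sech}^2 v)\,dv$; multiplying by the length $\log n + o(\log n)$ of the bulk of $I_n$ in the rescaled variable and adding the $\mathbf E[\widetilde N_n]$ contribution assembles $\kappa_\tau\log n$ exactly as in \eqref{kappa}. (4) For $S_n^{-1}$, the substitution $x\mapsto 1/x$ turns $\widetilde P_n$ into its reciprocal polynomial, whose top coefficients $c_{n-j}$ are \emph{asymptotically constant} (the $j^\tau$ weighting disappears near the top), so this regime behaves like a Kac polynomial and the same computation yields the Maslova constant $\tfrac{1}{\pi}(1-\tfrac2\pi)$. (5) For $\mathcal I_n$, combine: the four arcs $\pm I_n,\pm I_n^{-1}$ are pairwise at hyperbolic distance $\gg 1$ (they are separated, up to the cross-terms controlled by Lemma~\ref{l.ez}), so the cross-covariances are $o(\log n)$ and the variances simply add, giving $2\kappa_\tau + \tfrac2\pi(1-\tfrac2\pi)$ after using the symmetry $x\mapsto -x$ to pair $I_n$ with $-I_n$ and $I_n^{-1}$ with $-I_n^{-1}$.

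The main obstacle I anticipate is Step (3): controlling the integral of $\rho_2-\rho_1\rho_1$ when the pseudo-hyperbolic distance between $x$ and $y$ is very small. There the two terms $\rho_2$ and $\rho_1\rho_1$ are each of order $(\text{local scale})^{-2}$ and nearly cancel, so naive bounds lose the leading order; one must expand both to the next order and extract the genuine $O(1)$ difference, showing it matches $f_\tau$ near $u\to 1$ and in particular that $f_\tau(u)\to 0$ fast enough to be integrable at that endpoint (the finiteness and endpoint behavior recorded in Lemma~\ref{l.f-fact}). A secondary technical point is uniformity: the expansions in Section~\ref{sec3} hold with error terms depending on the decay rate of $o_j(1)$ in \ref{A2}, and one must check these errors integrate to $o(\log n)$ over the full range $[1-a_n,1-b_n]$, including the two ends $x\to 1-a_n$ and $x\to 1-b_n$ where the bulk approximation degrades; these endpoint zones have length $o(\log n)$ in the rescaled variable and contribute negligibly, but that has to be verified. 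Once the small-distance cancellation is handled cleanly and the error bookkeeping is done, the three displayed asymptotics follow by assembling the pieces as above.
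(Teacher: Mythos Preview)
Your proposal is correct and follows essentially the same route as the paper: Kac--Rice decomposition of the variance, asymptotics for $\rho_1,\rho_2$ from Section~\ref{sec3} giving $\rho_2-\rho_1\rho_1\sim \tfrac{2\tau+1}{\pi^2}\tfrac{f_\tau(\alpha)}{(1-x^2)(1-y^2)}$, a hyperbolic change of variables to integrate this, passage to the reciprocal polynomial $Q_n$ for $S_n^{-1}$ (which behaves like Kac, $\tau=0$), and verification that all cross-covariances among the four arcs are $o(\log n)$. Two small corrections: the substitution that linearizes the density is $x=\tanh t$, $y=\tanh s$ (so that $\tfrac{dx\,dy}{(1-x^2)(1-y^2)}=dt\,ds$ and $\alpha=\operatorname{sech}^2(s-t)$), not $x=\operatorname{sech}^2 v$; and the relevant two-point variable is the pseudo-hyperbolic quantity $\alpha(x,y)=\tfrac{(1-x^2)(1-y^2)}{(1-xy)^2}$ rather than the ratio $y/x$. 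Your identification of the main obstacle is also on target---in the paper this short-distance cancellation is resolved inside the proof of Lemma~\ref{l.cor-asymp} by fourth-order Taylor expansions (the functions $A,B,C,E$ there), after which the integration in Section~\ref{s.mainthmproof} is routine.
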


\section{Estimates for the correlation functions} 
\label{sec3}
The proof of Theorem \ref{t.maingauss} relies on the Kac--Rice formulas for the expectation and variance, which will be recalled below.

\subsection{Kac--Rice formulas} Let $r_n(x,y)$ denote the normalized correlator of $\widetilde{P}_n$, defined as
    \[
    r_n(x,y):=\frac{\mathbb E[\widetilde{P}_n (x)\widetilde{P}_n (y)]}{\sqrt{\Var[\widetilde{P}_n (x)]\Var[\widetilde{P}_n (y)]}}.
    \]
Setting $k_n(x):=\sum_{j=0}^nc_j^2x^j$, we see that 
\begin{equation} \label{r}
    r_n(x,y)=\frac{k_n(xy)}{\sqrt{k_n(x^2)k_n(y^2)}}.
\end{equation}
Thanks to the Kac--Rice formulas, the one-point correlation function of the real roots of $\widetilde P_n$ is given by (see, e.g., \cite{EK})
    \[
    \rho^{(1)}_n(x)=\frac{1}{\pi}\sqrt{\frac{\partial^2r_n}{\partial x\partial y}(x,x)},  
    \]
and the two-point correlation function is given by (see, e.g., \cite[Lemma 2.1]{N})
    \[
    \rho^{(2)}_n(x,y)=\frac{1}{\pi^2}\left(\sqrt{1-\delta^2_n(x,y)}+\delta_n(x,y) \arcsin \delta_n(x,y)\right)\frac{\sigma_n(x,y)}{\sqrt{1-r_n^2(x,y)}},
    \]
where 
    \[
    \sigma_n(x,y)=\sqrt{\bigg(\frac{\partial^2r_n}{\partial x\partial y}(x,x)-\frac{(\frac{\partial r_n}{\partial x}(x,y))^2}{1-r_n^2(x,y)}\bigg)\bigg(\frac{\partial^2r_n}{\partial x\partial y}(y,y)-\frac{(\frac{\partial r_n}{\partial y}(x,y))^2}{1-r_n^2(x,y)}\bigg)}
    \]
and 
    \[
    \delta_n(x,y)=\frac{1}{\sigma_n(x,y)}\bigg(\frac{\partial^2r_n}{\partial x\partial y}(x,y)+\frac{r_n(x,y)\frac{\partial r_n}{\partial x}(x,y)\frac{\partial r_n}{\partial y}(x,y)}{1-r_n^2(x,y)}\bigg).
    \]
As is standard, for any interval $I\subset \mathbb R$, we have 
\begin{equation} \label{mean}
    \mathbb E[\widetilde{N}_n (I)]=\int_{I}\rho^{(1)}_n(x)dx
\end{equation}
and 
\begin{equation} \label{var}
    \Var[\widetilde{N}_n (I)]=\iint_{I\times I}\left[\rho^{(2)}_n(x,y)-\rho^{(1)}_n(x)\rho^{(1)}_n(y)\right]dydx +\mathbb E[\widetilde{N}_n (I)].
\end{equation}

\subsection{Asymptotic estimates for the variance function and its derivatives} The estimates for the variances rely on the formulas \eqref{mean} and \eqref{var}, and asymptotic estimates for the correlation functions $\rho_n^{(1)}$ and $\rho_n^{(2)}$, which will be established shortly. To this end, we first investigate the behavior of $r_n(x,y)$ for $x,y\in I_n \cup (-I_n)$, and thanks to \eqref{r}, this will be done via estimates for $k_n(x)$ for $|x|\in I_n^2:=\{uv: u, v\in I_n\}$. Recall that $I_n=[1-a_n,1-b_n]$, where $a_n=d_n^{-1}=\exp(-\log^{d/4}n)$ and $b_n=d_n/n$. In what follows, we will assume that $n$ is sufficiently large and $S_n \in \{I_n,-I_n\}$.

By assumption \ref{A2}, we can write $c_j^2=C_1^2j^{2\tau}(1+o_{j,n})$ for $N_0\le j\le n$, where $o_{j,n}=o_j(1)$ as $j\to \infty$.

\begin{lemma}\label{l.kn}
Let
    \[
    \tau^0_n:=\max_{\log(\log n)\le j\le n} |o_{j,n}|+a_n^{\tau+1/2} + a_n.
    \]
Then it holds uniformly for $x\in I_n^2$ that
    \[
    k_n(x)=\frac{C_1^2\Gamma(2\tau+1)}{(1-x)^{2\tau+1}} (1+O(\tau^0_n))
    \]
and 
    \[
     k_n(-x)=O(\tau^0_n)k_n(x),
    \]
where the implicit constants admit the same potential dependencies as in Theorem \ref{t.maingauss}.
\end{lemma}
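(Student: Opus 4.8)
The plan is to estimate $k_n(x)=\sum_{j=0}^n c_j^2 x^j$ for $x\in I_n^2$ by comparing it to the exact generating function identity $\sum_{j\ge 0}\binom{2\tau+j}{j}x^j=(1-x)^{-(2\tau+1)}$, equivalently $\sum_{j\ge 0}\frac{\Gamma(j+2\tau+1)}{\Gamma(2\tau+1)\,j!}x^j=(1-x)^{-(2\tau+1)}$, and using the classical fact that $\frac{\Gamma(j+2\tau+1)}{j!}=j^{2\tau}(1+O(1/j))$ as $j\to\infty$. Since $\binom{2\tau+j}{j}\sim \frac{j^{2\tau}}{\Gamma(2\tau+1)}$, the ``model'' sum $\sum_j \frac{j^{2\tau}}{\Gamma(2\tau+1)}x^j$ should match $(1-x)^{-(2\tau+1)}$ up to lower-order terms, and then $k_n(x)=C_1^2\sum_j j^{2\tau}(1+o_{j,n})x^j$ is, after accounting for errors, $C_1^2\Gamma(2\tau+1)(1-x)^{-(2\tau+1)}(1+O(\tau^0_n))$.

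First I would reduce to a clean asymptotic for $x$ close to $1$: writing $x=1-t$ with $t=1-x\in[b_n^2,\,(2a_n)]$ roughly (since $x\in I_n^2$ means $x=uv$ with $u,v\in[1-a_n,1-b_n]$, so $1-x$ is comparable to $a_n$ at the top and to $b_n$ at the bottom), I would split the sum at a scale like $j\asymp 1/t$. For the tail $j\gtrsim 1/t$ the factor $x^j$ is exponentially small in $jt$, and I would control this tail directly against the corresponding tail of $\Gamma(2\tau+1)(1-x)^{-(2\tau+1)}=\sum_j \binom{2\tau+j}{j}\Gamma(2\tau+1)x^j$, noting both are of the same order. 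For the head $j\lesssim 1/t$ the number of terms is at most $\asymp 1/t\le d_n^2/ n^{\text{(small)}}$, wait — more carefully, $1/t\lesssim 1/b_n^2 = n^2/d_n^2$, so one cannot say the head is negligible; instead the head carries the main term. The right move is: (i) for $j\ge \log\log n$ use $c_j^2=C_1^2 j^{2\tau}(1+o_{j,n})$ with $|o_{j,n}|\le \tau^0_n$, and (ii) for $j<\log\log n$ bound $c_j^2\le C_2^2$ (for $j<N_0$) or $c_j^2=C_1^2 j^{2\tau}(1+o_{j,n})$ (for $N_0\le j<\log\log n$), contributing at most $O((\log\log n)\cdot (\log\log n)^{2|\tau|})$, which is $o(\tau^0_n (1-x)^{-(2\tau+1)})$ since $(1-x)^{-(2\tau+1)}\ge b_n^{-2(2\tau+1)}$ blows up far faster than any power of $\log\log n$ while $\tau^0_n\ge a_n^{\tau+1/2}$ is only polynomially small in $\log$-scale — here I would double-check the bookkeeping, splitting into $\tau\ge -1/2$ giving $a_n^{\tau+1/2}$ small and into the regime $1-x$ near $a_n$ versus near $b_n$. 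The discrepancy $\sum_{j\ge \log\log n} C_1^2 j^{2\tau}o_{j,n} x^j$ is $O(\tau^0_n)\sum_j C_1^2 j^{2\tau} x^j=O(\tau^0_n)\,\Gamma(2\tau+1)C_1^2(1-x)^{-(2\tau+1)}$, and the error from replacing $\binom{2\tau+j}{j}\Gamma(2\tau+1)$ by $j^{2\tau}$, namely $\sum_j O(j^{2\tau-1})x^j = O((1-x)^{-2\tau})=O((1-x)\cdot (1-x)^{-(2\tau+1)})=O(a_n)\,(1-x)^{-(2\tau+1)}$, is absorbed into $O(\tau^0_n)$ as well since $a_n\le\tau^0_n$. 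Finally the tail $\sum_{j>n}$ of the full binomial series is bounded by the tail of a geometric-times-polynomial series, which against $(1-x)^{-(2\tau+1)}$ is of relative size $\lesssim (n(1-x))^{2\tau+1}e^{-cn(1-x)}$; since $1-x\gtrsim b_n^2 = d_n^2/n^2$ this is not automatically small, so I would instead observe $1-x\ge b_n\cdot b_n$ is too weak and must use that for $x\in I_n^2$ with both factors in $I_n$, actually $x\le (1-b_n)^2$ so $1-x\ge 2b_n - b_n^2\ge b_n$, giving $n(1-x)\ge nb_n=d_n\to\infty$, hence the tail beyond $j=n$ is superpolynomially small relative to the main term — this is the cleanest form of the cutoff-at-$n$ error.

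For the second assertion $k_n(-x)=O(\tau^0_n)k_n(x)$: I would write $k_n(-x)=\sum_j c_j^2(-1)^j x^j$ and pair consecutive terms, $c_{2i}^2 x^{2i}-c_{2i+1}^2 x^{2i+1}$. Using $c_j^2 = C_1^2 j^{2\tau}(1+o_{j,n})$ the paired difference is $C_1^2 x^{2i}\big[(2i)^{2\tau}-(2i+1)^{2\tau}x + (2i)^{2\tau}o_{2i,n}-(2i+1)^{2\tau}x\,o_{2i+1,n}\big]$; the term $(2i)^{2\tau}-(2i+1)^{2\tau}x = (2i)^{2\tau}(1-x) + O(i^{2\tau-1})$, and summing $\sum_i C_1^2 i^{2\tau}(1-x)x^{2i}=O((1-x))\sum_i i^{2\tau}x^{2i}=O((1-x))\cdot\Gamma(2\tau+1)C_1^2(1-x^2)^{-(2\tau+1)}=O((1-x)/(1-x)^{2\tau+1})\cdot(1-x)^{-2\tau}\cdots$ — cleaner: $=O((1-x)(1-x^2)^{-(2\tau+1)})=O((1-x)^{-2\tau})=O((1-x)\cdot(1-x)^{-(2\tau+1)})$, which is $O(a_n)k_n(x)=O(\tau^0_n)k_n(x)$; the $o_{j,n}$ contributions and the $O(i^{2\tau-1})$ contributions are likewise $O(\tau^0_n)k_n(x)$ by the same estimates as in the first part, plus a harmless boundary term from the last (possibly unpaired) index and the small-$j$ terms bounded as before. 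I would handle the low range $j<\log\log n$ separately with the crude bound $c_j^2\le\max(C_2^2, C_1^2(\log\log n)^{2|\tau|}(1+o(1)))$ and note it is dominated by $\tau^0_n k_n(x)$ because $k_n(x)$ is polynomially large in $n$ (as $1-x\le 2a_n\to0$ and in fact $1-x\gtrsim b_n$ so $k_n(x)\gtrsim b_n^{-(2\tau+1)}$), whereas the low-range sum is only polylog in $n$.

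The main obstacle is the bookkeeping of the error terms across the full range of $x\in I_n^2$, because $1-x$ ranges all the way from order $a_n=e^{-\log^{d/4}n}$ (barely sub-constant) down to order $b_n = d_n/n$ (polynomially small), so one must be careful that every error term — the $o_{j,n}$ tail, the Stirling correction $O(j^{2\tau-1})$, the truncation at $j=n$, and the small-$j$ cutoff at $\log\log n$ — is genuinely $O(\tau^0_n)$ times the main term $\Gamma(2\tau+1)C_1^2(1-x)^{-(2\tau+1)}$ \emph{uniformly} over that whole window, and in particular that the definition of $\tau^0_n$ (which includes $a_n^{\tau+1/2}$, $a_n$, and $\sup_{j\ge\log\log n}|o_{j,n}|$) exactly captures the worst case, with the sign $\tau+1/2>0$ being exactly what makes $a_n^{\tau+1/2}\to 0$. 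A secondary technical point is making the comparison $\sum_j\binom{2\tau+j}{j}x^j=(1-x)^{-(2\tau+1)}$ rigorous for non-integer $\tau$ and extracting the $j^{2\tau}$ leading behavior with explicit $O(1/j)$ control, which is standard (e.g.\ via the $\Gamma$-function ratio asymptotic) but needs to be invoked cleanly.
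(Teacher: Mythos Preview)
Your proposal is correct and follows the same overall architecture as the paper: split off the small-$j$ terms (below $\log\log n$), replace $c_j^2$ by $C_1^2 j^{2\tau}$ up to a multiplicative $1+O(\tau_n^0)$, compare the model sum $\sum_j j^{2\tau}x^j$ to $\Gamma(2\tau+1)(1-x)^{-(2\tau+1)}$, and show the tail $j>n$ is negligible using $n(1-x)\ge n b_n=d_n\to\infty$. The differences are in the analytic devices. For the comparison step the paper invokes the polylogarithm expansion $\Li_{-2\tau}(x)=\Gamma(2\tau+1)(-\log x)^{-2\tau-1}+O(1)$ directly, whereas you use the generalized binomial series $(1-x)^{-(2\tau+1)}=\sum_j\binom{2\tau+j}{j}x^j$ together with the Gamma-ratio asymptotic $\binom{2\tau+j}{j}\Gamma(2\tau+1)=j^{2\tau}(1+O(1/j))$; these are two standard routes to the same estimate, and yours is slightly more elementary. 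For the alternating sum $k_n(-x)$ the paper uses the duplication identity $\Li_{-2\tau}(-x)+\Li_{-2\tau}(x)=2^{2\tau+1}\Li_{-2\tau}(x^2)$, which (because the $(-\log x)^{-2\tau-1}$ leading terms cancel exactly) immediately gives $v_n(-x)=O(1)$; your pairing argument $c_{2i}^2x^{2i}-c_{2i+1}^2x^{2i+1}$ is a hands-on substitute that reaches the same bound but with more bookkeeping. One small wording fix: you say ``$k_n(x)$ is polynomially large in $n$,'' but near $1-x\asymp a_n$ it is only of size $a_n^{-(2\tau+1)}=\exp((2\tau+1)\log^{d/4}n)$, which is sub-polynomial; what you actually need (and what the paper uses) is that $\tau_n^0\,k_n(x)\ge a_n^{\tau+1/2}\cdot a_n^{-(2\tau+1)}=a_n^{-(\tau+1/2)}$ still dominates the polylog-sized low-range contribution, and this is exactly why the term $a_n^{\tau+1/2}$ appears in the definition of $\tau_n^0$.
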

\begin{proof}
Clearly, $\tau^0_n=o(1)$. By scaling invariance, we may assume $C_1=1$. For $x\in I_n^2$, we have
\begin{align*}
    k_n(x)&=\sum_{j=0}^{N_0-1}c_j^2x^j+ \sum_{j=N_0}^nj^{2\tau}(1+o_{j,n})x^j\\
    &=\sum_{j=0}^{N_0-1}c_j^2x^j+(1+O(\tau^0_n))\sum_{j=1}^nj^{2\tau}x^j+O\bigg(\sum_{j=1}^{\lfloor\log(\log n)\rfloor}j^{2\tau}x^j\bigg)\\
    &=: h(x)+(1+O(\tau^0_n))v_n(x)+O(t_n(x)).
\end{align*}
It is clear that $h(x)$ is uniformly bounded on any compact subset of $\mathbb R$, and these bounds are independent of $n$. For $O(t_n(x))$, we note that
    \[
    \left|t_n(x)\right| = O\left([\log(\log n)]^{2\tau+1}\right) =\frac{O(a_n^{\tau+1/2})}{(1-x)^{2\tau+1}}=\frac{O(\tau^0_n)}{(1-x)^{2\tau+1}},\quad x\in I_n^2.
    \]
The estimate for the middle term is based on the asymptotics of $v_n(x):=\sum_{j=1}^nj^{2\tau}x^j$. For $|x|<1$, $v_n(x)$ converges to $v_{\infty}(x)=\Li_{-2\tau}(x)$ as $n\to \infty$, where $\Li_{s}(z)$ is the polylogarithm function defined by
    \[
    \Li_{s}(z)=\sum_{j=1}^\infty \frac{z^j}{j^s},\quad |z|<1.
    \]
It is well known that (see \cite[p. 149]{T})
    \[
    \Li_s(z)=\Gamma(1-s)(-\log z)^{s-1}+\sum_{m=0}^\infty\zeta(s-m)\frac{(\log z)^m}{m!},
    \]
for $|\log z|<2\pi$ and $s\notin \{1,2,3,...\}$, where $\zeta(s)$ is the Riemann zeta function.
Thus, uniformly for $x\in I_n^2$ (and one could also let $x\in I_n^4$), we have 
\begin{align*}
    \Li_{-2\tau}(x) &=\Gamma(2\tau+1)(-\log x)^{-2\tau-1}+O(1)\\
    &=\frac{\Gamma(2\tau+1)}{(1-x)^{2\tau+1}}(1+O(\tau^0_n)),
\end{align*}
where in the second estimate, we implicitly used the fact that $1-x=O(a_n)=O(\tau^0_n)$. 

To estimate $\Li_{-2\tau}(-x)$ for $x\in I_n^2$, we use the first estimate of the last display and the duplication formula (see \cite[\S7.12]{L}),
    \[
    \Li_{-2\tau}(-x)+\Li_{-2\tau}(x)=2^{2\tau+1}\Li_{-2\tau}(x^2),
    \]
and find that uniformly for $x\in I_n^2$, 
    \[
    \Li_{-2\tau}(-x)=O(1).
    \]

For $|x|\in I_n^2$, we have 
    \[
    1-|x|\ge b_n=d_n/n\quad \text{and}\quad  
    |x|^{n+1}\le (1-b_n)^{2(n+1)} =O(e^{-2d_n}),
    \]
so
\begin{align*}
    \bigg|\sum_{j=n+1}^\infty j^{2\tau}x^{j}\bigg|&=|x|^{n+1}\bigg|\sum_{j=0}^{\infty}(j+n+1)^{2\tau}x^{j}\bigg|\\
    &= |x|^{n+1}O\bigg(\sum_{j=0}^{\infty}j^{2\tau}|x|^{j}+(n+1)^{2\tau}\sum_{j=0}^{\infty}|x|^{j}\bigg)\\
    &= |x|^{n+1}O\left(\Li_{-2\tau}(|x|)+\frac{(n+1)^{2\tau}}{1-|x|}\right)\\
    &=o(e^{-d_n}).
\end{align*}
Thus, uniformly for $x\in I_n^2$, 
    \[
    v_n(x)=\Li_{-2\tau}(x)-\sum_{j=n+1}^\infty j^{2\tau}x^{j} = \frac{\Gamma(2\tau+1)}{(1-x)^{2\tau+1}}(1+O(\tau^0_n)),
    \]
and 
    \[
    v_n(-x)=O(1).
    \]
Therefore, uniformly for $x\in I_n^2$,
    \[
    k_n(x)=\frac{C_1^2\Gamma(2\tau+1)}{(1-x)^{2\tau+1}} (1+O(\tau^0_n)).
    \]
Since
    \[
    k_n(-x)=h(-x)+C_1^2\sum_{j=N_0}^{n}j^{2\tau}(-x)^j+C_1^2\sum_{j=N_0}^n j^{2\tau}o_{j,n}(-x)^{j},
    \]
it follows that 
    \[
    |k_n(-x)| \le O(1)+ O(1)+O(\tau^0_n v_n(x))=O(\tau^0_n)k_n(x),\quad x\in I_n^2,
    \]
where, in the last estimate, we used $a_n^{2\tau+1} = O(\tau_n^0)$. This completes the proof of the lemma. 
\end{proof}

The proof extends to $k^{(i)}_n$, with $\tau$ replaced by $\tau+\frac 1 2 i$. Indeed, for $i\ge 1$, we have 
    \[
    k_n^{(i)}(x)=\sum_{j=i}^n c_j^2j(j-1)\cdots (j-i+1) x^{j-i}.
    \]
Recall that 
    \[
    c_j^2 = \begin{cases}
    C_1^2j^{2\tau}(1+o_{j,n})&\text{if}\quad N_0\le j\le n,\\
    O(1)&\text{if}\quad 0\le j<N_0.
    \end{cases}
    \]
Thus, for fixed $i\ge 1$ and all $j\ge i$, we have
    \[
    |c_j^2 j(j-1)\dots(j-i+1)|= \begin{cases}
    C_1^2 j^{2\tau+i} \left(1+O(o_{n,j})+O\left(1/j\right)\right)&\text{if}\quad j\ge \max\{N_0, i\},\\
    O(1)&\text{if}\quad i\le j < N_0.
    \end{cases}
    \]
Since $(a_n)^c = O(\frac 1{\log\log n})$ for any positive constant $c$,  we then let
\begin{equation}\label{e.taun}
    \tau_n:=\max_{\log(\log n)\le j\le n} \bigg|\frac{|c_j|}{C_1j^\tau}-1\bigg| +\frac 1 {\log\log n} = o(1),
\end{equation}
and obtain the following corollary.

\begin{corollary}\label{c.kn-der}
For any $0\le i\le 4$, it holds uniformly for $x\in I_n^2$ that
    \[
    k^{(i)}_n(x)=\frac{C^2_1\Gamma(2\tau+i+1)}{(1-x)^{2\tau+i+1}} (1+O(\tau_n))
    \]
and 
    \[
    k^{(i)}_n(-x)=O(\tau_n)k^{(i)}_n(x),
    \]
where the implicit constants have the same possible dependence given in Theorem~\ref{t.maingauss}.
\end{corollary}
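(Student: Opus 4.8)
The plan is to rerun the proof of Lemma~\ref{l.kn} essentially verbatim, with $k_n$ replaced by $k^{(i)}_n$ and $\tau$ replaced by $\tau+\tfrac i2$, as the remark preceding the statement already indicates. Write
\[
k^{(i)}_n(x)=\sum_{j=i}^n c_j^2\, j(j-1)\cdots(j-i+1)\, x^{j-i}.
\]
The key input is the elementary identity recorded just before the corollary,
\[
|c_j^2\, j(j-1)\cdots(j-i+1)|=C_1^2 j^{2\tau+i}\bigl(1+O(o_{n,j})+O(1/j)\bigr),
\]
which follows from \ref{A2} together with $j(j-1)\cdots(j-i+1)=j^i(1+O(1/j))$ (implicit constant depending on the fixed $i$). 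The prefactor $x^{-i}=1+O(1-x)=1+O(a_n)=1+O(\tau_n)$ on $I_n^2$ can be pulled out; the finitely many terms with $j<\max(N_0,i,\lfloor\log\log n\rfloor)$ contribute $O\bigl([\log\log n]^{2\tau+i+1}\bigr)$, which is $O(\tau_n)$ relative to the main term since $a_n\log\log n\to0$ and $a_n^c=O(1/\log\log n)$ for every fixed $c>0$; and the $O(1/j)$ correction is harmless because for $j\ge\log\log n$ it is $O(1/\log\log n)\le\tau_n$. Thus, exactly as in Lemma~\ref{l.kn}, everything reduces to the asymptotics of $v^{(i)}_n(x):=\sum_{j=1}^n j^{2\tau+i}x^j$.

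For the main term I would split $v^{(i)}_n(x)=\Li_{-(2\tau+i)}(x)-\sum_{j>n}j^{2\tau+i}x^j$, the tail being $o(e^{-d_n})$ by the same computation as in Lemma~\ref{l.kn} (using $|x|^{n+1}=O(e^{-2d_n})$ on $I_n^2$). Applying the polylogarithm expansion
\[
\Li_s(z)=\Gamma(1-s)(-\log z)^{s-1}+\sum_{m=0}^\infty\zeta(s-m)\frac{(\log z)^m}{m!}
\]
with $s=-(2\tau+i)$ is legitimate: $\tau>-1/2$ forces $2\tau+i>-1$, hence $s<1$ and in particular $s\notin\{1,2,3,\dots\}$, while $|\log x|<2\pi$ on $I_n^2$ for large $n$, and $\Gamma(1-s)=\Gamma(2\tau+i+1)$ is finite and positive. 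Using $-\log x=(1-x)(1+O(\tau_n))$ this gives $\Li_{-(2\tau+i)}(x)=\Gamma(2\tau+i+1)(1-x)^{-(2\tau+i+1)}(1+O(\tau_n))$, and unwinding the reductions above (and restoring $C_1^2$) yields the first estimate of the corollary.

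For the negative argument I use the duplication identity $\Li_s(x)+\Li_s(-x)=2^{1-s}\Li_s(x^2)$ with $s=-(2\tau+i)$, the expansion above being valid also on $I_n^4\ni x^2$, and I exploit the same cancellation as in Lemma~\ref{l.kn}: since $-\log(x^2)=2(-\log x)$, the term $2^{2\tau+i+1}\Gamma(2\tau+i+1)(-\log x^2)^{-(2\tau+i+1)}$ equals $\Gamma(2\tau+i+1)(-\log x)^{-(2\tau+i+1)}$, precisely the leading singular term of $\Li_{-(2\tau+i)}(x)$; hence $\Li_{-(2\tau+i)}(-x)=O(1)$ uniformly on $I_n^2$. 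Therefore $v^{(i)}_n(-x)=O(1)$, and tracking the signs in $k^{(i)}_n(-x)=\sum_{j\ge i}(-1)^{j-i}c_j^2\,j(j-1)\cdots(j-i+1)x^{j-i}$ gives
\[
k^{(i)}_n(-x)=O(1)+O(\tau_n)\,v^{(i)}_n(x)=O(1)+O(\tau_n)\,k^{(i)}_n(x)=O(\tau_n)\,k^{(i)}_n(x),
\]
the last step because $O(1)=O(a_n^{2\tau+i+1})\,k^{(i)}_n(x)=O(\tau_n)\,k^{(i)}_n(x)$, again using $a_n^c=O(1/\log\log n)$. All implicit constants depend only on the quantities listed in Theorem~\ref{t.maingauss}, as in Lemma~\ref{l.kn}.

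There is no substantial obstacle; the corollary is a routine reprise of Lemma~\ref{l.kn}. The only points requiring a little care are the bookkeeping showing that differentiating does not degrade the relative error past $\tau_n$ (the falling-factorial correction $O(1/j)$ and the discarded low-order block $j<\log\log n$ are both swallowed by the $1/\log\log n$ built into $\tau_n$), and the observation that the shifted polylog index $s=-(2\tau+i)$ never meets a positive integer, which is exactly what $\tau>-1/2$ guarantees. Uniformity over the finite set $i\in\{0,1,2,3,4\}$ is then automatic.
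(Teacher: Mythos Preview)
Your proposal is correct and follows essentially the same approach as the paper, which simply indicates that the proof of Lemma~\ref{l.kn} is to be rerun with $\tau$ replaced by $\tau+\tfrac i2$ and with the enlarged error term $\tau_n$ absorbing the new $O(1/j)$ coming from the falling factorial. You have filled in the details of that sketch carefully, including the check that the shifted polylogarithm index $s=-(2\tau+i)$ stays below $1$ and the bookkeeping showing that both the $x^{-i}$ prefactor and the low-index block are $O(\tau_n)$ relative to the main term.
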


\subsection{The pseudo-hyperbolic distance on the unit disk} 
This subsection reviews the pseudo-hyperbolic metric on the unit disk, which plays a central role in estimating $r_n(x,y)$ and its partial derivatives.

For $(x,y)\in \mathbb R\times \mathbb R$ with $1-xy\ne 0$, let us introduce the function
    \[
    \alpha:=\alpha(x,y):=1-\left(\frac{y-x}{1-xy}\right)^2=\frac{(1-x^2)(1-y^2)}{(1-xy)^2}.
    \]
Clearly, $0\le \alpha \le 1$. It is well known in complex analysis that
    \[
    \varrho(z,w):=\frac{|z-w|}{|1-\overline{w}z|}
    \]
defines a metric on the hyperbolic disk $\mathbb D :=\{z\in \mathbb C: |z|<1\}$ and is known as the pseudo-hyperbolic distance on $\mathbb D$  (see, e.g., \cite{G1}). A related notion is
    \[
    \frac{|x-y|^2}{(1-|x|^2)(1-|y|^2)} \equiv \frac 1{\alpha}-1,
    \]
which can be naturally extended to $\mathbb R^n$, where it is an isometric invariant for the conformal ball model $\mathbb B^n:=\{x\in \mathbb R^n:\|x\|<1\}$, and the classical Poincar\'e metric on $\mathbb B^n$ can also be computed from this invariant (see, e.g., \cite[\S 4.5]{R}). 

We first prove a property of the pseudo-hyperbolic distance that will be convenient later.

\begin{lemma}\label{l.ez} Let $0\le c<\frac 1{\sqrt 5}$ be a fixed constant. Suppose that for some $x,y \in (-1,1)$ with the same sign, we have
    \[
    \varrho(x,y)  \le c.
    \] 
Then, for every $z_1,z_2,z_3,z_4$ between $x$ and $y$, it holds that
    \[
    \frac{1}{1-z_1 z_2}=\frac{1+O(\varrho(x,y))}{1-z_3z_4},
    \]
where the implicit constants may depend on $c$. Consequently,
    \[
    \varrho(z_1,z_2)\le \varrho(x,y)[1+O(\varrho(x,y))].
    \]
\end{lemma}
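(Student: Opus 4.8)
The plan is to reduce everything to a single scalar quantity. For $x,y$ of the same sign with $\varrho(x,y)\le c$, write $t:=\varrho(x,y)=\left|\frac{y-x}{1-xy}\right|$, so that $\alpha(x,y)=1-t^2$ and in particular $\alpha$ is bounded below by $1-c^2>4/5$. The key observation is that any $z_i$ lying between $x$ and $y$ satisfies $|z_i-x|\le|y-x|$ and $|z_i-y|\le|y-x|$, and (since $x,y$ have the same sign and $|x|,|y|<1$) the quantities $1-z_iz_j$, $1-xy$, $1-x^2$, $1-y^2$ are all comparable: each lies between $\min(1-x^2,1-y^2)$ and $\max(1-x^2,1-y^2)$, up to the relation $1-z_iz_j = (1-x^2) + (\text{controlled corrections})$. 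So first I would establish the two-sided bound $1-z_iz_j = (1-xy)\bigl(1+O(t)\bigr)$: writing $z_i = x+\theta_i(y-x)$, $z_j = x+\theta_j(y-x)$ with $\theta_i,\theta_j\in[0,1]$, one computes
\[
1 - z_iz_j = 1 - x^2 - (\theta_i+\theta_j)x(y-x) - \theta_i\theta_j(y-x)^2,
\]
and then divides by $1-xy = 1-x^2 - x(y-x)$. The correction terms are $O\bigl(|y-x|\cdot|1-xy|^{-1}\bigr)\cdot(1-xy)$ in size; but $|y-x|/|1-xy| = t$, so each correction is $O(t)(1-xy)$. Hence the ratio $(1-z_iz_j)/(1-xy) = 1 + O(t)$, and since $t\le c<1/\sqrt5$ the ratio stays bounded away from $0$, which lets us invert: $\frac{1}{1-z_1z_2} = \frac{1+O(t)}{1-xy}$ and similarly for $z_3z_4$; dividing the two gives the first claimed identity.

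For the consequence about $\varrho(z_1,z_2)$, I would write $\varrho(z_1,z_2) = \frac{|z_1-z_2|}{|1-z_1z_2|}$ and bound the numerator by $|z_1-z_2|\le|y-x|$ (both between $x$ and $y$), while for the denominator I use $|1-z_1z_2| = |1-xy|(1+O(t))$ from the first part, so $|1-z_1z_2|\ge |1-xy|(1-O(t))$, which is $\ge \tfrac12|1-xy|$ for $t$ small (absorbing the threshold into the constant depending on $c$). Therefore $\varrho(z_1,z_2) \le \frac{|y-x|}{|1-xy|}\cdot\frac{1}{1-O(t)} = t\,(1+O(t)) = \varrho(x,y)\bigl(1+O(\varrho(x,y))\bigr)$, as desired.

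The only genuinely delicate point is making sure the implicit constants are uniform — i.e. that they depend only on $c$ and not on how close $x,y$ are to $\pm1$. This works precisely because every estimate is scale-invariant under the relevant quantity: all the error terms are expressed as multiples of $t=\varrho(x,y)$, which is a ratio, and the comparability of the $1-z_iz_j$'s to $1-xy$ never sees the actual magnitude of $1-xy$. I would be slightly careful that the $z_i$ do indeed have $|z_i|<1$ (so that $\varrho(z_1,z_2)$ makes sense) — this is immediate since $z_i$ is a convex combination of $x$ and $y$, both in $(-1,1)$ — and that the sign hypothesis on $x,y$ is used only to guarantee $1-xy\in(0,1)$ and is bounded below, so that denominators don't vanish. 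This is routine but should be stated. I expect no serious obstacle; the one thing to get right is the bookkeeping that turns "$|y-x|$ small relative to $1-xy$" into clean $O(\varrho(x,y))$ error terms with $c$-dependent constants.
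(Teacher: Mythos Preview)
Your argument is correct. The paper's proof takes a slightly different route: instead of parametrizing $z_i=x+\theta_i(y-x)$ and expanding $(1-z_iz_j)/(1-xy)$ directly, it first observes (using the same-sign hypothesis) that $z_1z_2,z_3z_4\in[x^2,y^2]$, reducing the claim to $\frac{1}{1-y^2}=(1+O(\varrho))\frac{1}{1-x^2}$; this is then established via the identity $\frac{|x-y|}{\sqrt{(1-x^2)(1-y^2)}}=\sqrt{\alpha^{-1}-1}\le\frac{\varrho(x,y)}{\sqrt{1-c^2}}$ together with the inequality $\frac{2c}{\sqrt{1-c^2}}<1$, which is precisely where the threshold $c<1/\sqrt5$ enters. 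Your direct expansion is a bit more robust---it does not genuinely rely on the same-sign hypothesis, and the threshold it needs (roughly $c(1+2c)<1$, i.e.\ $c<1/2$) is less restrictive---while the paper's sandwich-to-the-endpoints reduction is marginally cleaner to write down. Both proofs are elementary and of comparable length.
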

\begin{proof}
It is clear that in the conclusion of Lemma~\ref{l.ez}, the second desired estimate follows immediately from the first desired estimate and the inequality $|z_1-z_2|\le |x-y|$. Below, we prove the first estimate.

Without loss of generality, we may assume $|x|\le |y|$.
Since $z_1,z_2,z_3,z_4$ are of the same sign, we have $x^2\le z_1z_2,z_3z_4\le y^2$, thus it suffices to show that
    \[
    \frac 1{1-y^2}  =(1+O(\varrho(x,y)) \frac 1{1-x^2}. 
    \]
It follows from the given hypothesis that $\alpha=1-\varrho^2(x,y) \ge 1-c^2$. Consequently,
    \[
    \frac{|x-y|}{\sqrt{(1-x^2)(1-y^2)}} =  \sqrt{\frac 1{\alpha }-1} \le \frac{\varrho(x,y)}{\sqrt{1-c^2}}.
    \]
Therefore,
\begin{align*}
    0\le \frac 1{1-y^2}-\frac 1{1-x^2} =\frac{y^2-x^2}{(1-x^2)(1-y^2)} 
    &\le  \frac{2|x-y|}{(1-x^2)(1-y^2)} \\
    &\le \frac{2\varrho(x,y)/\sqrt{1-c^2}}{\sqrt{(1-x^2)(1-y^2)}}\\
    &\le \frac{2\varrho(x,y)/\sqrt{1-c^2}}{1-y^2}.
\end{align*}
Since $\frac{2\varrho(x,y)}{\sqrt{1-c^2}} \le \frac{2c}{\sqrt{1-c^2}}<1$, we obtain
    \[
    \frac 1{1-y^2} \le \bigg(1-\frac{2\varrho(x,y)}{\sqrt{1-c^2}}\bigg)^{-1}\frac 1{1-x^2} = (1+O(\varrho(x,y)))\frac 1{1-x^2},
    \]
and the lemma follows.
\end{proof}

\subsection{Asymptotic estimates for the correlator and its partial derivatives} 
This subsection establishes asymptotic estimates for the normalized correlator $r_n(x,y)$ and its partial derivatives. Under hypothesis \ref{A2}, the polynomial $\widetilde{P}_n$ closely resembles a hyperbolic random polynomial. It is well known that the zero sets of complex Gaussian hyperbolic polynomials are asymptotically invariant under isometries of the hyperbolic disk $\mathbb D$. More precisely, as shown in \cite[Proposition 2.3.4]{HKPV09}, if $a_j$ are independent standard complex Gaussian random variables, then the zero set of the Gaussian hyperbolic series
    \[
    \widetilde{P}_{\infty,L}(z)=\sum_{j=0}^\infty \sqrt{\frac{L(L+1)\dots (L+j-1)}{j!}} a_j z^j
    \]
is invariant (in distribution) under the group of transformations
    \[
    \varphi_{a,b}(z)=\frac{az+b}{\overline{b}z+\overline{a}},\quad z\in \mathbb D,
    \]
where $a, b \in \mathbb C$ and $|a|^2-|b|^2=1$. These linear fractional transformations bijectively map $\mathbb D$ to itself and preserve the hyperbolic metric $ds^2=\frac{dx^2+dy^2}{(1-|z|^2)^2}$ and the hyperbolic area measure $\frac{dm(z)}{(1-|z|^2)^2}$. Thus, it seems natural to expect that asymptotic estimates for the correlation functions of $\widetilde{P}_n$ will involve isometry-invariant quantities, such as the pseudo-hyperbolic distance. The next few lemmas make this heuristic precise.

\begin{lemma} \label{lem.r}
Let $\tau_n$ be defined by \eqref{e.taun}. It holds uniformly for $(x,y)\in S_n\times S_n$ that 
\begin{equation}\label{e.r}
    r_n(x,y)=\alpha^{\tau+1/2}(1+O(\tau_n))
\end{equation}
    and 
\begin{equation} \label{e.1-r2}
    1-r_n^2(x,y)=(1-\alpha^{2\tau+1})(1+O(\sqrt[4]{\tau_n})).
\end{equation}
\end{lemma}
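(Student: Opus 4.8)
The plan is to compute $r_n(x,y)$ directly from the closed form \eqref{r}, namely $r_n(x,y)=k_n(xy)/\sqrt{k_n(x^2)k_n(y^2)}$, by substituting the asymptotics for $k_n$ from Lemma~\ref{l.kn}. For $(x,y)\in S_n\times S_n$ the three arguments $xy$, $x^2$, $y^2$ all have absolute value in $I_n^2$, so Lemma~\ref{l.kn} applies to each. If $S_n=I_n$, all three are positive and we get directly
\[
r_n(x,y)=\frac{\dfrac{C_1^2\Gamma(2\tau+1)}{(1-xy)^{2\tau+1}}(1+O(\tau_n^0))}{\sqrt{\dfrac{C_1^2\Gamma(2\tau+1)}{(1-x^2)^{2\tau+1}}\cdot\dfrac{C_1^2\Gamma(2\tau+1)}{(1-y^2)^{2\tau+1}}}(1+O(\tau_n^0))}=\left(\frac{(1-x^2)(1-y^2)}{(1-xy)^2}\right)^{\tau+1/2}(1+O(\tau_n^0)),
\]
which is exactly $\alpha^{\tau+1/2}(1+O(\tau_n^0))$; since $\tau_n^0\le \tau_n$ this gives \eqref{e.r}. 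If $S_n=-I_n$, write $x=-x'$, $y=-y'$ with $x',y'\in I_n$; then $xy=x'y'>0$ while $x^2=(x')^2$, $y^2=(y')^2$, so the same computation goes through verbatim (the sign of $x,y$ only enters through $xy$, which is positive in both cases, and $\alpha(x,y)=\alpha(x',y')$). I should remark that $\alpha\in[0,1]$ so $\alpha^{\tau+1/2}$ is well-defined and bounded, and state explicitly that the $O(\tau_n)$ is uniform because the implicit constants in Lemma~\ref{l.kn} are.

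For \eqref{e.1-r2} the naive step is to square \eqref{e.r}: $r_n^2=\alpha^{2\tau+1}(1+O(\tau_n))^2=\alpha^{2\tau+1}(1+O(\tau_n))$, hence $1-r_n^2=1-\alpha^{2\tau+1}+O(\tau_n)\alpha^{2\tau+1}$. The issue — and the main obstacle here — is that this is only a good \emph{relative} estimate when $1-\alpha^{2\tau+1}$ is not too small: when $\alpha$ is close to $1$ (i.e. $x,y$ are pseudo-hyperbolically close), $1-\alpha^{2\tau+1}$ can be far smaller than $\tau_n$, so the additive error $O(\tau_n)\alpha^{2\tau+1}$ would swamp the main term and one cannot factor it as $(1-\alpha^{2\tau+1})(1+O(\sqrt[4]{\tau_n}))$. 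So the real work is to get a genuinely multiplicative bound $r_n^2(x,y)=\alpha^{2\tau+1}(1+O(\sqrt[4]{\tau_n}\,(1-\alpha^{2\tau+1})))$ — or something of that flavor — in the regime where $\alpha$ is near $1$.

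The way I would handle the near-diagonal regime is to go back to the series for $k_n$ rather than to the leading-term asymptotics. Split into two cases according to whether $1-\alpha\le \sqrt{\tau_n}$ (say) or not. In the far regime $1-\alpha^{2\tau+1}\gtrsim \sqrt{\tau_n}$ the crude subtraction above already yields a relative error $O(\sqrt{\tau_n})$, comfortably $O(\sqrt[4]{\tau_n})$. In the near regime, one writes $1-r_n^2(x,y)=\dfrac{k_n(x^2)k_n(y^2)-k_n(xy)^2}{k_n(x^2)k_n(y^2)}$ and analyzes the numerator; the point is that $k_n(x^2)k_n(y^2)-k_n(xy)^2=\sum_{i,j}c_i^2c_j^2(x^{2i}y^{2j}-x^{i+j}y^{i+j})=\tfrac12\sum_{i,j}c_i^2c_j^2(x^iy^j-x^jy^i)^2\ge 0$, a Cauchy–Schwarz-type identity, and comparing it term-by-term with the same expression formed from the pure power coefficients $C_1^2 j^{2\tau}$ (using $c_j^2=C_1^2j^{2\tau}(1+o_{j,n})$ and Lemma~\ref{l.kn}) shows the numerator is $(1+O(\tau_n))$ times its model counterpart, which in turn is $\asymp (1-\alpha^{2\tau+1})k_n(x^2)k_n(y^2)$. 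Here Lemma~\ref{l.ez} is the natural tool: since $x,y$ have the same sign and $\varrho(x,y)^2=1-\alpha$ is small, it lets me replace $1/(1-z_1z_2)$ by $1/(1-z_3z_4)$ up to a factor $1+O(\varrho(x,y))$ for any $z_i$ between $x$ and $y$, which is exactly what is needed to control the ratios of the various $k_n$ evaluations appearing when one bounds the error in the numerator. The exponent $1/4$ (rather than $1/2$) in \eqref{e.1-r2} presumably absorbs the loss from these comparisons near $\alpha=1$, so I would not try to optimize it. Finally, reassemble the two cases; the $k_n(-x)=O(\tau_n^0)k_n(x)$ half of Lemma~\ref{l.kn} is not needed for this lemma but would be invoked in the analogous mixed-sign estimates later.
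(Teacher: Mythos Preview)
Your argument for \eqref{e.r} and for the far-from-diagonal half of \eqref{e.1-r2} is correct and coincides with the paper's: the paper also splits at $\varrho(x,y)=\tau_n^{1/4}$ (equivalently $1-\alpha=\sqrt{\tau_n}$) and in that regime uses $1-\alpha^{2\tau+1}\ge (2\tau+1)\sqrt{\tau_n}\,\alpha^{2\tau+1}$ to absorb the additive $O(\tau_n)\alpha^{2\tau+1}$ error coming from squaring \eqref{e.r}.

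The near-diagonal case, however, has a real gap. Your Cauchy--Schwarz identity and the term-by-term comparison with the ``pure power'' model $c_j^2\leadsto C_1^2 j^{2\tau}$ only \emph{shift} the difficulty: the model kernel $v_n(x)=\sum_{j\le n} C_1^2 j^{2\tau}x^j$ is itself only $\frac{C_1^2\Gamma(2\tau+1)}{(1-x)^{2\tau+1}}$ up to a nontrivial multiplicative error, so evaluating $v_n(x^2)v_n(y^2)-v_n(xy)^2$ near $\alpha=1$ runs into exactly the same cancellation you set out to resolve. Your assertion that the model numerator is ``$\asymp(1-\alpha^{2\tau+1})k_n(x^2)k_n(y^2)$'' is therefore the entire problem and is left unproved (note also that $\asymp$ would not suffice --- the lemma demands the exact leading constant). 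The paper's near-diagonal argument is different and does not pass through a model: fixing $x$ and setting $A(y):=k_n(x^2)k_n(y^2)-k_n(xy)^2$, one checks $A(x)=A'(x)=0$, so by the mean value theorem $A(y)=\tfrac12 A''(z)(y-x)^2$ for some $z$ between $x$ and $y$; then $A''(z)$ is computed directly from the asymptotics of $k_n,k_n',k_n''$ in Corollary~\ref{c.kn-der}, with Lemma~\ref{l.ez} used precisely to replace $1/(1-xz)$ and $1/(1-z^2)$ by $1/(1-x^2)$ up to factors $1+O(\tau_n^{1/4})$. This yields $1-r_n^2=(2\tau+1)\bigl(\tfrac{y-x}{1-xy}\bigr)^2(1+O(\tau_n^{1/4}))$, which matches $(1-\alpha^{2\tau+1})(1+O(\tau_n^{1/4}))$ because $1-\alpha^{2\tau+1}=(2\tau+1)(1-\alpha)(1+O(\sqrt{\tau_n}))$ in this regime. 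The second-order vanishing of $A$ at the diagonal is the device that converts the cancellation into an honest leading term, and that idea is missing from your outline. (A variant of your approach that \emph{would} work is to compare term-by-term with the exact limit kernel $k_\infty(x)=C_1^2\Gamma(2\tau+1)(1-x)^{-(2\tau+1)}$, for which $r_\infty^2\equiv\alpha^{2\tau+1}$ identically; but then the comparison coefficients are $\gamma_j=C_1^2\Gamma(2\tau+1)\binom{2\tau+j}{j}$ rather than $C_1^2 j^{2\tau}$, and the tail $j>n$ and low indices must be handled separately --- all doable, since each term carries a built-in factor $(y-x)^2$, but none of this is in your sketch.)
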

\begin{proof}
Inequality \eqref{e.r} follows immediately from \eqref{r} and Corollary~\ref{c.kn-der}.

To prove \eqref{e.1-r2}, we consider two cases according to whether $x$ and $y$ are close with respect to the pseudo-hyperbolic distance $\varrho(x,y)=|\frac{y-x}{1-xy}|$. 

Let $D:=\{(x,y)\in S_n\times S_n: \varrho(x,y)> \sqrt[4]{\tau_n}\}$ and $D':=(S_n\times S_n) \setminus D$ (see Figure \ref{fig4}).

\begin{figure}[htbp]
\centering
\includegraphics[scale=1]{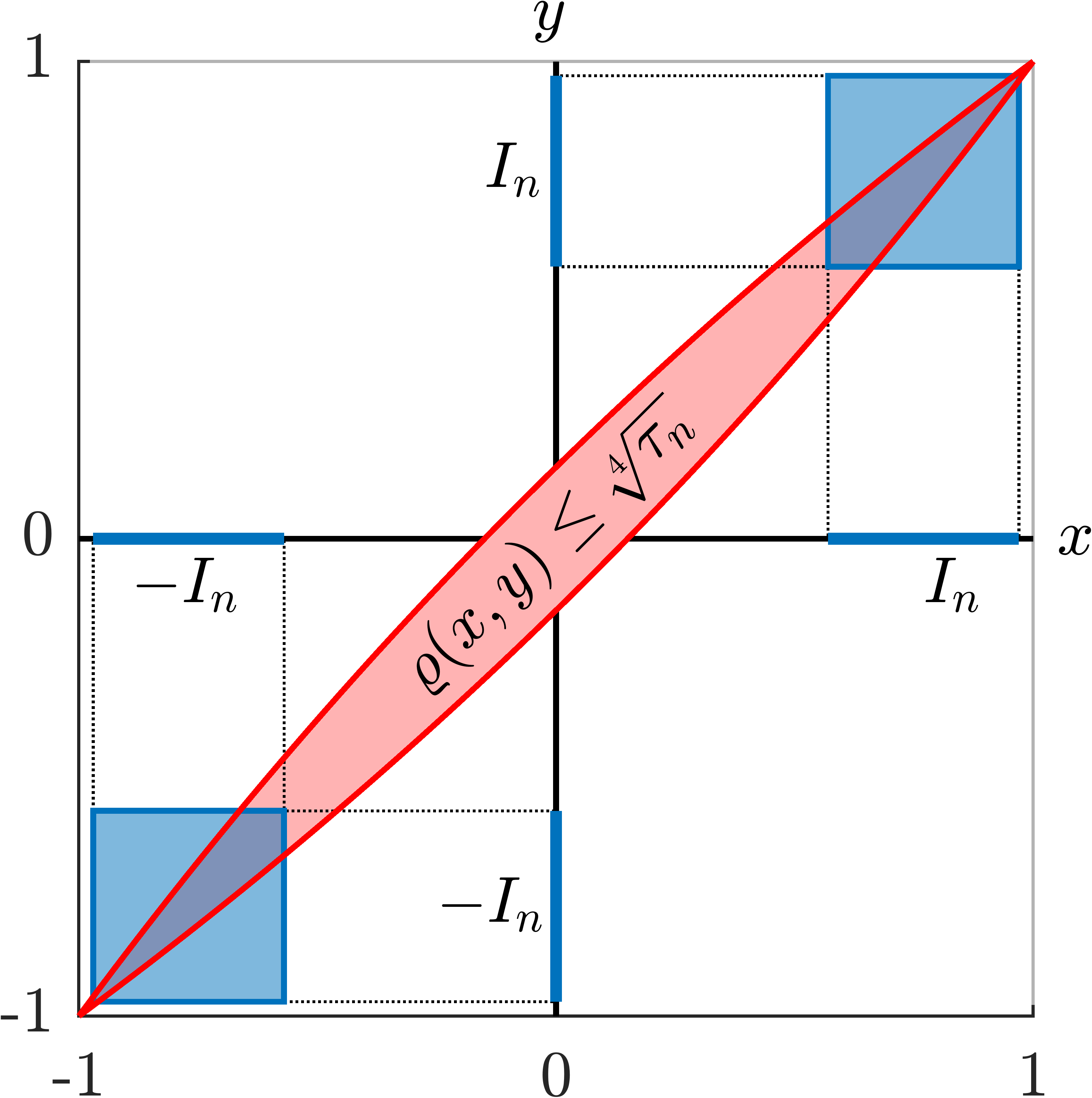}
\caption{$D'$ is the intersection of the red region with one of the two blue squares.}
\label{fig4}
\end{figure}

First, if $(x,y)\in D$, then $1- \alpha  > \sqrt{\tau_n}$, and so by the mean value theorem, we have
    \[
    1-\alpha^{2\tau+1}  \ge (2\tau+1)(1-\alpha)\min(\alpha^{2\tau},1) \ge (2\tau+1)\sqrt{\tau_n}\alpha^{2\tau+1}.
    \]
Therefore, we can use \eqref{e.r} to obtain
    \[
    1-r_n^2(x,y)=1-\alpha^{2\tau+1}(1+O(\tau_n))=(1-\alpha^{2\tau+1})(1+O(\sqrt{\tau_n})),
    \]  
which implies \eqref{e.1-r2}. 

We now assume that $(x,y)\in D'$. We have $\alpha=1+O(\sqrt{\tau_n})$. Therefore, using the mean value theorem, we obtain
\begin{equation}\label{e.alp}
    1-\alpha^{2\tau+1} = (2\tau+1)(1-\alpha)(1+O(\sqrt{\tau_n})).
\end{equation}
Using Lemma~\ref{l.ez},  for any $z_1,z_2$ between $x$ and $y$, we have
\begin{equation} \label{ez}
    \frac{1}{1-z_1 z_2}=\frac{1+O(\sqrt[4]{\tau_n})}{1-y^2}=\frac{1+O(\sqrt[4]{\tau_n})}{1-x^2}, \quad \text{and} \quad \frac{|z_1-z_2|}{|1-z_1z_2|}=O(\sqrt[4]{\tau_n}).
\end{equation}
Fix $x$. We then have
    \[
   1-r_n^2(x,y)=\frac{A(y)}{k_n(x^2)k_n(y^2)},
    \]
where $A(y):=k_n(x^2)k_n(y^2)-k_n^2(xy)$ for $y\in S_n$. Using Corollary~\ref{c.kn-der} and \eqref{ez}, we find
    \[
    k_n(x^2)k_n(y^2) = \left(\frac{C_1^2 \Gamma(2\tau+1)}{(1-x^2)^{2\tau+1}}\right)^2\left(1+O(\sqrt[4]{\tau_n})\right).
    \]
Clearly, $A(x)=0$, $A'(x)=0$, and for any $z$ between $x$ and $y$, we have
    \[
    A''(z)=2k_n(x^2)k_n'(z^2)+4z^2k_n(x^2)k_n''(z^2)-2x^2k_n''(xz)k_n(xz)-2x^2(k_n'(xz))^2.
    \]
Using Corollary~\ref{c.kn-der} and \eqref{ez}, we obtain
\begin{align*}
    A''(z) &= \frac{C_1^4\Gamma(2\tau+1)\Gamma(2\tau+2)}{(1-x^2)^{4\tau+4}} \Big[2+ 4(2\tau+2)(z^2-x^2) + O(\sqrt[4]{\tau_n}) \Big]\\
    &=\frac{2C_1^4\Gamma(2\tau+1)\Gamma(2\tau+2)}{(1-x^2)^{4\tau+4}}  (1+O(\sqrt[4]{\tau_n})).
\end{align*}
Since $A(x)=A'(x)=0$, according to the mean value theorem, there exists some $z$ between $x$ and $y$ such that 
    \[
    A(y)=\frac{1}{2}A''(z)(y-x)^2.
    \]
  Consequently, 
\begin{align*}
    1-r_n^2(x,y)=\frac{A(y)}{k_n(x^2)k_n(y^2)}=\frac{(y-x)^2A''(z)}{2k_n(x^2)k_n(y^2)}
    &=(2\tau+1)\bigg(\frac{y-x}{1-x^2}\bigg)^2(1+O(\sqrt[4]{\tau_n}))\\ 
    &= (2\tau+1)\bigg(\frac{y-x}{1-xy}\bigg)^2(1+O(\sqrt[4]{\tau_n})),
\end{align*}
which gives \eqref{e.1-r2} when combined with \eqref{e.alp}.
\end{proof}

Next, we estimate the partial derivatives of $r_n(x,y)$. To simplify computations and avoid messy algebra, let us introduce the function $\ell_n(x,y):=\log |r_n(x,y)|$. Note that
\begin{equation}\label{e.r2l}
    \frac{\partial r_n}{\partial x}(x,y)=r_n(x,y)\frac{\partial \ell_n}{\partial x}(x,y),\quad \frac{\partial r_n}{\partial y}(x,y)=r_n(x,y)\frac{\partial \ell_n}{\partial y}(x,y),
\end{equation}
and 
\begin{equation}\label{e.rxy2lxy}
    \frac{\partial^2 r_n}{\partial x\partial y}(x,y)=r_n(x,y)\left(\frac{\partial^2 \ell_n}{\partial x\partial y}(x,y)+\frac{\partial \ell_n}{\partial x}(x,y)\frac{\partial \ell_n}{\partial y}(x,y)\right).
\end{equation}
The following lemma indicates that one can take the natural log of \eqref{e.r} and then differentiate, and the estimates remain essentially valid.

\begin{lemma} \label{lem.l} It holds uniformly for $(x,y)\in S_n\times S_n$ that 
\begin{equation} \label{lx}
    \frac{\partial \ell_n}{\partial x}(x,y)=\frac{2\tau+1}{1-x^2}\bigg(\frac{y-x}{1-xy}\bigg)\left(1+O(\sqrt{\tau_n})\right)
\end{equation}
and 
\begin{equation} \label{lxy}
    \frac{\partial^2 \ell_n}{\partial x\partial y}(x,y)=\frac{2\tau+1}{(1-xy)^2}\left(1+O(\tau_n)\right).
\end{equation}
\end{lemma}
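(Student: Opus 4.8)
The plan is to obtain \eqref{lx} and \eqref{lxy} by differentiating the logarithmic identity $\ell_n(x,y)=\log|k_n(xy)|-\tfrac12\log k_n(x^2)-\tfrac12\log k_n(y^2)$ coming from \eqref{r}, and then inserting the asymptotics of $k_n$ and its derivatives from Corollary~\ref{c.kn-der}, together with the geometric comparison in Lemma~\ref{l.ez}. Concretely, from \eqref{r} one computes
\[
\frac{\partial \ell_n}{\partial x}(x,y)=\frac{y\,k_n'(xy)}{k_n(xy)}-\frac{x\,k_n'(x^2)}{k_n(x^2)},
\]
and
\[
\frac{\partial^2\ell_n}{\partial x\partial y}(x,y)=\frac{k_n'(xy)}{k_n(xy)}+xy\left(\frac{k_n''(xy)}{k_n(xy)}-\Big(\frac{k_n'(xy)}{k_n(xy)}\Big)^2\right).
\]
The point of passing to $\ell_n$ is precisely that the leading singular factor $(1-t)^{-(2\tau+1)}$ in $k_n(t)$ becomes additive, so that $k_n'/k_n(t)=\frac{2\tau+1}{1-t}(1+O(\tau_n))$ and $k_n''/k_n-(k_n'/k_n)^2(t)=\frac{2\tau+1}{(1-t)^2}(1+O(\tau_n))$ by Corollary~\ref{c.kn-der}; these are the exact shapes that will produce the stated right-hand sides.

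For \eqref{lxy} this is essentially immediate: plugging the above into the formula for $\partial^2\ell_n/\partial x\partial y$ gives
\[
\frac{2\tau+1}{1-xy}+xy\cdot\frac{2\tau+1}{(1-xy)^2}+\text{errors}=\frac{(2\tau+1)(1-xy)+(2\tau+1)xy}{(1-xy)^2}+\cdots=\frac{2\tau+1}{(1-xy)^2}(1+O(\tau_n)),
\]
where one must check that the $O(\tau_n)$ relative errors from Corollary~\ref{c.kn-der} survive the cancellation in the numerator — here there is no subtractive cancellation among the two main terms (both are positive multiples of $(2\tau+1)/(1-xy)^2$ after clearing denominators), so the error stays $O(\tau_n)$ and no splitting into $D$ and $D'$ is needed. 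For \eqref{lx} the situation is more delicate: the formula $\partial\ell_n/\partial x=\frac{y(2\tau+1)}{1-xy}-\frac{x(2\tau+1)}{1-x^2}+\cdots$ exhibits exactly the kind of algebraic cancellation flagged in the introduction, since the two terms nearly cancel when $x$ and $y$ are pseudo-hyperbolically close. The clean identity to exploit is
\[
\frac{y}{1-xy}-\frac{x}{1-x^2}=\frac{y(1-x^2)-x(1-xy)}{(1-x^2)(1-xy)}=\frac{(y-x)}{(1-x^2)(1-xy)}\cdot\frac{1-x^2+ x(x-y)+\cdots}{1}
\]
— more precisely $y(1-x^2)-x(1-xy)=(y-x)(1+ xy - x^2 ) $ should be simplified directly to $(y-x)(1-x^2) + (y-x)x(y-x)\cdot(\dots)$; the honest computation gives $(y-x) - x(x^2y - x y^2)$-type terms, and after collecting one finds the main term $\frac{2\tau+1}{1-x^2}\cdot\frac{y-x}{1-xy}$ with a correction of relative size $O(|x-y|/|1-xy|)=O(\varrho(x,y))$. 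This is where Lemma~\ref{l.ez} enters: in the regime where $\varrho(x,y)$ is not small the factor $\frac{y-x}{1-xy}$ is itself of order $1$, and one instead bounds both terms crudely using Corollary~\ref{c.kn-der} and checks the identity \eqref{lx} holds with a harmless $O(1)$ implicit constant absorbed into $(1+O(\sqrt{\tau_n}))$ via the same $D$/$D'$ dichotomy used in the proof of Lemma~\ref{lem.r}; when $\varrho(x,y)$ is small, Lemma~\ref{l.ez} lets one replace all the $1-z_iz_j$ denominators arising from the error terms of Corollary~\ref{c.kn-der} by $1-x^2$ up to $(1+O(\varrho))$, so the combined error is $O(\sqrt{\tau_n})$ as claimed.

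The main obstacle I anticipate is bookkeeping the error terms through the near-cancellation in \eqref{lx}: one has a difference of two quantities each of the form $\frac{2\tau+1}{1-t}(1+O(\tau_n))$, and the absolute error $O(\tau_n)\cdot\frac{1}{1-x^2}$ must be shown to be $O(\sqrt{\tau_n})$ relative to the \emph{small} main term $\frac{2\tau+1}{1-x^2}\cdot\frac{y-x}{1-xy}$, which forces a case split exactly at the scale $|\frac{y-x}{1-xy}|\asymp\sqrt{\tau_n}$ — matching the threshold $\sqrt[4]{\tau_n}$ vs $\sqrt{\tau_n}$ appearing in Lemma~\ref{lem.r}. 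In the "close" case the gain comes from Lemma~\ref{l.ez} (the error denominators are comparable to $1-x^2$ and the $O(\tau_n)$ is genuinely relative to the main term after factoring out $\frac{y-x}{1-xy}$), while in the "far" case $\frac{y-x}{1-xy}$ is bounded below and the estimate is routine. Once this is handled, both displays follow by straightforward substitution, and I would present the argument as: (i) write the logarithmic derivative formulas; (ii) insert Corollary~\ref{c.kn-der}; (iii) for \eqref{lxy}, simplify the numerator and observe no adverse cancellation; (iv) for \eqref{lx}, simplify the numerator to isolate the factor $y-x$, then split into the two pseudo-hyperbolic regimes and invoke Lemma~\ref{l.ez} in the close regime.
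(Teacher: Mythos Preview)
Your overall architecture matches the paper's: you derive the logarithmic-derivative formulas from \eqref{r}, substitute Corollary~\ref{c.kn-der}, treat \eqref{lxy} by direct computation (no cancellation, error stays $O(\tau_n)$), and for \eqref{lx} split at the threshold $\varrho(x,y)=\sqrt{\tau_n}$. The far case and the mixed-derivative case are fine and are exactly what the paper does.

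The gap is in the close case $\varrho(x,y)\le\sqrt{\tau_n}$. Substituting $k_n'/k_n(t)=\frac{2\tau+1}{1-t}(1+O(\tau_n))$ termwise gives
\[
\frac{\partial\ell_n}{\partial x}(x,y)=(2\tau+1)\Big(\frac{y}{1-xy}-\frac{x}{1-x^2}\Big)+\Big(\frac{y\,e_1}{1-xy}-\frac{x\,e_2}{1-x^2}\Big),
\]
with two \emph{independent} errors $e_1,e_2=O(\tau_n)$. The first bracket simplifies exactly to $\frac{y-x}{(1-x^2)(1-xy)}$; the second, after using Lemma~\ref{l.ez} to equate denominators, is still $O(\tau_n)/(1-x^2)$. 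Its size relative to the main term is $\tau_n(1-xy)/|y-x|=\tau_n/\varrho(x,y)$, which is \emph{not} $O(\sqrt{\tau_n})$ once $\varrho\ll\sqrt{\tau_n}$. Lemma~\ref{l.ez} only harmonises denominators; it cannot make the two $O(\tau_n)$ errors cancel, and your sentence ``the $O(\tau_n)$ is genuinely relative to the main term after factoring out $\frac{y-x}{1-xy}$'' is precisely the step that fails.

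The paper's remedy is to go back one level before the quotient: write $\partial\ell_n/\partial x=B(y)/\bigl(k_n(xy)k_n(x^2)\bigr)$ with $B(y)=y\,k_n'(xy)k_n(x^2)-x\,k_n'(x^2)k_n(xy)$, observe $B(x)=0$, and apply the mean value theorem to get $B(y)=(y-x)B'(z)$ for some $z$ between $x$ and $y$. Now $B'(z)$ is a combination of $k_n,k_n',k_n''$ evaluated at products of points between $x$ and $y$, so Corollary~\ref{c.kn-der} together with Lemma~\ref{l.ez} gives $B'(z)=\frac{(2\tau+1)C_1^4\Gamma(2\tau+1)^2}{(1-x^2)^{4\tau+4}}(1+O(\sqrt{\tau_n}))$ with an error that is genuinely relative, because the factor $(y-x)$ has already been extracted \emph{before} inserting the asymptotics. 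This mean-value-theorem step is the missing idea in your close-case argument.
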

\begin{proof}
We start with the proof of \eqref{lx}. From \eqref{r}, we have
    \[
    \ell_n(x,y)=\log|k_n(xy)|-\frac 12 \log k_n(x^2)-\frac 12 \log k_n(y^2),
    \]
which leads to
\begin{equation}\label{e.lx}
    \frac{\partial \ell_n}{\partial x}(x,y)
    =y\frac{k_n'(xy)}{k_n(xy)}-x\frac{k_n'(x^2)}{k_n(x^2)}=\frac{yk_n'(xy)k_n(x^2)-xk_n'(x^2)k_n(xy)}{k_n(xy)k_n(x^2)}.
\end{equation}
Utilizing Corollary~\ref{c.kn-der}, it holds uniformly for $(x,y)\in S_n\times S_n$ that
\begin{equation} \label{dk-over-k}
    \frac{k_n'(xy)}{k_n(xy)}=\frac{2\tau+1}{1-xy}(1+O(\tau_n)) \quad \text{and}\quad \frac{k_n'(x^2)}{k_n(x^2)}=\frac{2\tau+1}{1-x^2}(1+O(\tau_n)).
\end{equation}
We now divide the proof into two cases, similar to the proof of Lemma~\ref{lem.r}.

Let $D:=\{(x,y)\in S_n\times S_n: \varrho(x,y) \ge \sqrt{\tau_n}\}$ and $D':=(S_n\times S_n) \setminus D.$

For $(x,y)\in D$,  one observes that
    \[
    \frac {1-x^2}{|y-x|} = \frac {1-xy}{|y-x|} + x\frac{y-x}{|y-x|} < \frac 2{\sqrt{\tau_n}},
    \]
thus yielding
    \[
    \max\left\{\frac{\tau_n}{1-x^2},\frac{\tau_n}{1-xy}\right\} < 2\sqrt{\tau_n}\frac{|y-x|}{(1-x^2)(1-xy)}.
    \]
Hence, \eqref{dk-over-k} implies 
    \[
    \frac{\partial \ell_n}{\partial x}(x,y)=y\frac{k_n'(xy)}{k_n(xy)}-x\frac{k_n'(x^2)}{k_n(x^2)}=\frac{2\tau+1}{1-x^2}\left(\frac{y-x}{1-xy}\right)\left(1+O(\sqrt{\tau_n})\right).
    \]

We now suppose that $(x,y)\in D'$, then $\alpha=1+O(\tau_n)$. Using Lemma~\ref{l.ez}, for all $z_1,z_2$ between $x$ and $y$, we have
\begin{equation} \label{ez'}
    \frac{1}{1-z_1 z_2}=\frac{1+O(\sqrt{\tau_n})}{1-y^2}=\frac{1+O(\sqrt{\tau_n})}{1-x^2}, \quad \text{and}\quad \frac{|z_1-z_2|}{|1-z_1z_2|}=O(\sqrt{\tau_n}).
\end{equation}
Fix $x\in S_n$ and express $\frac{\partial \ell_n}{\partial x}(x,y)$ as
    \[
    \frac{\partial \ell_n}{\partial x}(x,y) = \frac{B(y)}{k_n(xy)k_n(x^2)},
    \]
where $B(y):=yk_n'(xy)k_n(x^2)-xk_n'(x^2)k_n(xy)$, treated as a function of $y\in S_n$. Then, $B(x)=0$ and 
    \[
    B'(y)=k_n(x^2)[k_n'(xy)+xyk_n''(xy)]-x^2k_n'(x^2)k_n'(xy).
    \]
By employing Corollary~\ref{c.kn-der} and \eqref{ez'}, we find that for any $z$ between $x$ and $y$, 
\begin{align*}
    \frac{B'(z)}{k_n(x^2)k_n(xz)}&=\frac{2\tau+1}{(1-x^2)^2}\Big[(1-x^2) +xz (2\tau+2) - x^2 (2\tau+1) + O(\sqrt{\tau_n})\Big]\\
    &=\frac {2\tau+1}{(1-x^2)^2}\left[1+(2\tau+2)x(z-x)+O(\sqrt{\tau_n}) \right]\\
    &=\frac {2\tau+1}{(1-x^2)^2}\left[1+O(\sqrt{\tau_n})\right].
\end{align*}
Next, using the mean value theorem and \eqref{ez'}, we see that for some $z$ between $x$ and $y$,
    \[
    B(y)=B(x)+B'(z)(y-x)=B'(z)(y-x),
    \]
thus yielding
\begin{align*}
    \frac{\partial \ell_n}{\partial x}(x,y)=\frac{(y-x)B'(z)}{k_n(xy)k_n(x^2)} &= \frac{(2\tau+1)(y-x)}{(1-x^2)^2}\left(1+O(\sqrt{\tau_n})\right)\\
    &=\frac{2\tau+1}{1-x^2}\left(\frac{y-x}{1-xy}\right)\left(1+O(\sqrt{\tau_n})\right), 
\end{align*}
and \eqref{lx} is proved.

To establish \eqref{lxy}, we utilize Corollary \ref{c.kn-der}, leading to
\begin{align}
    \label{e.lxy-expand} \frac{\partial^2 \ell_n}{\partial x\partial y}(x,y)&=\frac{k_n'(xy)}{k_n(xy)}+xy\frac{k''_n(xy)}{k_n(xy)}- xy\left(\frac{k'_n(xy)}{k_n(xy)}\right)^2\\
    \nonumber &=\frac{2\tau+1}{1-xy}+xy\frac{(2\tau+1)(2\tau+2)}{(1-xy)^2}-xy\left(\frac{2\tau+1}{1-xy}\right)^2 + O\left(\frac {\tau_n}{(1-xy)^2}\right)\\
    \nonumber &=\frac{2\tau+1}{(1-xy)^2}\left(1+O(\tau_n)\right),
\end{align}
thereby concluding the proof.
\end{proof}

\subsection{Asymptotics of the one-point correlation function}
As a consequence of the preceding estimates, we derive an asymptotic expression for the one-point correlation function $\rho_n^{(1)}$.

\begin{corollary} 
Uniformly for $x\in S_n \cup (-S_n)$, it holds that
\begin{equation} \label{rho1-in}
    \rho_n^{(1)}(x)=\frac{1}{\pi}\frac{\sqrt{2\tau+1}}{1-x^2}\left(1+O(\tau_n)\right).
\end{equation}
\end{corollary}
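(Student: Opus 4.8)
The plan is to read off $\rho_n^{(1)}$ from the Kac--Rice identity $\rho^{(1)}_n(x)=\frac{1}{\pi}\sqrt{\frac{\partial^2r_n}{\partial x\partial y}(x,x)}$ by specializing the logarithmic-derivative estimates of Lemma~\ref{lem.l} to the diagonal $y=x$. First I would record three exact facts on the diagonal. From \eqref{r} one has $r_n(x,x)=k_n(x^2)/\sqrt{k_n(x^2)k_n(x^2)}=1$, in particular $\ell_n(x,x)=0$. From \eqref{e.lx}, $\frac{\partial\ell_n}{\partial x}(x,x)=x\frac{k_n'(x^2)}{k_n(x^2)}-x\frac{k_n'(x^2)}{k_n(x^2)}=0$, and by symmetry $\frac{\partial\ell_n}{\partial y}(x,x)=0$; this is consistent with \eqref{lx}, where the factor $y-x$ vanishes at $y=x$. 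Finally \eqref{lxy} gives $\frac{\partial^2\ell_n}{\partial x\partial y}(x,x)=\frac{2\tau+1}{(1-x^2)^2}(1+O(\tau_n))$.

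Next I would substitute $y=x$ into the identity \eqref{e.rxy2lxy}. Since $\frac{\partial\ell_n}{\partial x}(x,x)\frac{\partial\ell_n}{\partial y}(x,x)=0$ and $r_n(x,x)=1$, this collapses to $\frac{\partial^2 r_n}{\partial x\partial y}(x,x)=\frac{2\tau+1}{(1-x^2)^2}(1+O(\tau_n))$. Taking the (positive) square root, and using that $\tau_n=o(1)$ so that $\sqrt{1+O(\tau_n)}=1+O(\tau_n)$ uniformly, yields $\rho_n^{(1)}(x)=\frac{1}{\pi}\frac{\sqrt{2\tau+1}}{1-x^2}(1+O(\tau_n))$ for $x\in S_n$.

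Finally, for $x\in-S_n$ I would invoke the symmetry $r_n(-x,-y)=r_n(x,y)$, immediate from \eqref{r}, which is preserved under $\frac{\partial^2}{\partial x\partial y}$ and hence gives $\rho_n^{(1)}(-x)=\rho_n^{(1)}(x)$; combined with $1-(-x)^2=1-x^2$ this extends the estimate to all of $S_n\cup(-S_n)$. There is no real obstacle here, since everything has been prepared by Lemma~\ref{lem.l}; the only point requiring a little care is the error bookkeeping: one must note that the weaker $O(\sqrt{\tau_n})$ error in \eqref{lx} is multiplied by the quantity $y-x$, which is exactly $0$ on the diagonal, so only the sharper $O(\tau_n)$ error of \eqref{lxy} propagates, producing precisely the stated rate.
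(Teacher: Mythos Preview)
Your proof is correct and follows essentially the same route as the paper: reduce to the diagonal via $r_n(x,x)=1$ and the exact vanishing $\frac{\partial\ell_n}{\partial x}(x,x)=\frac{\partial\ell_n}{\partial y}(x,x)=0$, apply \eqref{e.rxy2lxy} and \eqref{lxy}, and extend to $-S_n$ by the evenness $r_n(-x,-y)=r_n(x,y)$. Your closing remark on error bookkeeping is harmless but unnecessary, since the exact diagonal vanishing (from \eqref{e.lx}) means the $O(\sqrt{\tau_n})$ factor in \eqref{lx} never enters.
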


We note that a variant of \eqref{rho1-in} is also implicit in \cite{DNV}, with a stronger bound for the error term (but more stringent assumptions on the coefficients $c_j$).

\begin{proof} 
By symmetry, it suffices to consider $x\in S_n$. Using \eqref{e.rxy2lxy}, $r_n(x,x)=1$, and $\frac{\partial \ell_n}{\partial x}(x,x)=\frac{\partial \ell_n}{\partial y}(y,y)=0$, we see that
    \[
    \rho^{(1)}_n(x)=\frac{1}{\pi}\sqrt{\frac{\partial^2r_n}{\partial x\partial y}(x,x)}=\frac{1}{\pi}\sqrt{\frac{\partial^2\ell_n}{\partial x\partial y}(x,x)}.
    \]
Thus, \eqref{rho1-in} follows immediately from \eqref{lxy}.
\end{proof}

\subsection{Asymptotics of the two-point correlation function}
We now establish asymptotic estimates for the two-point correlation function $\rho_n^{(2)}$. Recall that 
\begin{equation} \label{rho2}
    \rho^{(2)}_n(x,y)=\frac{1}{\pi^2}\left(\sqrt{1-\delta^2_n(x,y)}+\delta_n(x,y) \arcsin \delta_n(x,y)\right)\frac{\sigma_n(x,y)}{\sqrt{1-r_n^2(x,y)}},
\end{equation}
where, using \eqref{e.r2l} and \eqref{e.rxy2lxy}, we write $\sigma_n$ and $\delta_n$ as
\begin{equation} \label{sigma}
\begin{split}
    \sigma_n(x,y)&=\pi^2\rho_n^{(1)}(x)\rho_n^{(1)}(y)  \\
    &\quad \times \sqrt{\bigg(1-\frac{(r_n(x,y)\frac{\partial \ell_n}{\partial x}(x,y))^2}{(1-r_n^2(x,y))\frac{\partial^2\ell_n}{\partial x\partial y}(x,x)}\bigg)\bigg(1-\frac{(r_n(x,y)\frac{\partial \ell_n}{\partial y}(x,y))^2}{(1-r_n^2(x,y))\frac{\partial^2\ell_n}{\partial x\partial y}(y,y)}\bigg)}
\end{split}
\end{equation}
and 
\begin{equation} \label{delta}
\begin{split}
    \delta_n(x,y)&=\frac{r_n(x,y)}{\sigma_n(x,y)}\left(\frac{\partial^2\ell_n}{\partial x\partial y}(x,y)+\frac{\frac{\partial \ell_n}{\partial x}(x,y)\frac{\partial \ell_n}{\partial y}(x,y)}{1-r_n^2(x,y)}\right).
\end{split}
\end{equation}
To keep the proof from being too long, we separate the estimates into several lemmas.

\begin{lemma}\label{l.cor-asymp}
Uniformly for $(x,y)\in S_n\times S_n$, it holds that
\begin{equation} \label{rho2-in}
    \rho^{(2)}_n(x,y) = \frac{2\tau+1}{\pi^2}\frac{1+f_{\tau}(\alpha)}{(1-x^2)(1-y^2)} \left(1+O(\sqrt[16]{\tau_n})\right), 
\end{equation}
where $f_\tau$ is defined as in \eqref{ftau}. Furthermore, there exists a constant $\alpha_0>0$ (independent of $n$, but possibly dependent on the implicit constants and rate of convergence in conditions \ref{A1} and \ref{A2})  such that when $\alpha \le \alpha_0$, the following holds
\begin{equation} \label{e.alphasmall-in}
    \rho_n^{(2)}(x,y)-\rho_n^{(1)}(x)\rho_n^{(1)}(y) = \frac{O(\alpha^{2\tau+1})}{(1-x^2)(1-y^2)}.
\end{equation}
\end{lemma}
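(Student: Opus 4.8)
The plan is to substitute the estimates of Lemmas~\ref{lem.r} and \ref{lem.l} and of Corollary~\ref{c.kn-der} into the Kac--Rice representation \eqref{rho2}--\eqref{delta} and to observe that the algebraic combinations appearing there collapse exactly onto the functions $\Sigma_\tau,\Delta_\tau$ of \eqref{del}. Write $G(t):=\sqrt{1-t^2}+t\arcsin t$, so that $\rho_n^{(2)}=\pi^{-2}G(\delta_n)\,\sigma_n/\sqrt{1-r_n^2}$ and, directly from \eqref{ftau}, the identity $1+f_\tau=G(\Delta_\tau)\,\Sigma_\tau$ holds. Put $E_x:=\frac{(r_n\partial_x\ell_n)^2}{(1-r_n^2)\,\partial^2_{xy}\ell_n(x,x)}$ and let $E_y$ be its $y$-analogue, so that \eqref{rho2}--\eqref{sigma} give $\rho_n^{(2)}(x,y)=\rho_n^{(1)}(x)\rho_n^{(1)}(y)\dfrac{G(\delta_n)\sqrt{(1-E_x)(1-E_y)}}{\sqrt{1-r_n^2}}$. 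Feeding \eqref{e.r}, \eqref{e.1-r2}, \eqref{lx}, \eqref{lxy}, \eqref{rho1-in} into these, together with the exact identity $(1-xy)^{-2}=\alpha/[(1-x^2)(1-y^2)]$, one gets $E_x=E_y=\frac{(2\tau+1)\alpha^{2\tau+1}(1-\alpha)}{1-\alpha^{2\tau+1}}(1+o(1))$, hence $1-E_x=1-E_y=\Sigma_\tau(\alpha)(1-\alpha^{2\tau+1})^{1/2}(1+o(1))$, hence $\sigma_n/\sqrt{1-r_n^2}=\frac{(2\tau+1)\Sigma_\tau(\alpha)}{(1-x^2)(1-y^2)}(1+o(1))$, and a parallel computation for the bracket in \eqref{delta} gives $\delta_n=\Delta_\tau(\alpha)(1+o(1))$. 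Since $G$ is globally Lipschitz on $[-1,1]$ with $G\ge1$, the error from replacing $\delta_n$ by $\Delta_\tau(\alpha)$ turns multiplicative, and one concludes $\rho_n^{(2)}(x,y)=\frac{2\tau+1}{\pi^2(1-x^2)(1-y^2)}G(\Delta_\tau(\alpha))\Sigma_\tau(\alpha)(1+o(1))=\frac{2\tau+1}{\pi^2(1-x^2)(1-y^2)}(1+f_\tau(\alpha))(1+o(1))$, which is \eqref{rho2-in}; the only nontrivial point is to make the error come out as $O(\sqrt[16]{\tau_n})$.

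That bookkeeping degenerates near $\alpha=1$, i.e.\ when $\varrho(x,y)=\sqrt{1-\alpha}$ is small: both $1-E_x=\Sigma_\tau(\alpha)(1-\alpha^{2\tau+1})^{1/2}$ and $1+\Delta_\tau(\alpha)$ vanish to first order in $1-\alpha$, so the multiplicative errors from Lemmas~\ref{lem.r}--\ref{lem.l} get divided by $1-\alpha$ and stop controlling the cancellation. I would therefore split $S_n\times S_n$ into the ``far'' region $\{1-\alpha\ge\sqrt[8]{\tau_n}\}$, where the substitution above is valid with a relative error that is $O(\sqrt[8]{\tau_n})$, hence in particular $O(\sqrt[16]{\tau_n})$ (as $\tau_n<1$), and the ``near-diagonal'' region $\{1-\alpha<\sqrt[8]{\tau_n}\}$. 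On the latter I would imitate the $D'$-arguments from the proofs of Lemmas~\ref{lem.r}--\ref{lem.l}: fix $x$, rewrite $r_n$, $\sigma_n$, $\delta_n$ through $k_n$ and its derivatives of order up to $4$ (this is exactly why Corollary~\ref{c.kn-der} is stated up to $i=4$), and introduce
\[
A(y):=k_n(x^2)k_n(y^2)-k_n^2(xy),\qquad B(y):=yk_n'(xy)k_n(x^2)-xk_n'(x^2)k_n(xy),
\]
and $C(x):=k_n'(x^2)k_n(x^2)+x^2k_n''(x^2)k_n(x^2)-x^2(k_n'(x^2))^2$. One checks $A(x)=A'(x)=B(x)=0$, $\tfrac12A''(x)=B'(x)=C(x)$, $\partial^2_{xy}\ell_n(x,x)=C(x)/k_n^2(x^2)$, and $\partial^2_{xy}\ell_n(x,x)(1-E_x)=\frac{C(x)A(y)-B(y)^2}{k_n^2(x^2)A(y)}$; here the denominator vanishes to order $2$ in $y-x$ and --- because the left side is $\ge0$ and vanishes at $y=x$ --- the numerator vanishes to order $4$, and the analogous cancellation holds in the numerator of the bracket in \eqref{delta}. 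Taylor's theorem with Lagrange remainder then isolates the leading terms, and estimating their coefficients by Corollary~\ref{c.kn-der} together with Lemma~\ref{l.ez} (which gives $1-z^2=(1-x^2)(1+O(\varrho(x,y)))$ for $z$ between $x$ and $y$) yields, on this region, $\sigma_n/\sqrt{1-r_n^2}=\frac{(2\tau+1)\Sigma_\tau(\alpha)}{(1-x^2)(1-y^2)}(1+O(\sqrt[16]{\tau_n}))$ and $\delta_n=\Delta_\tau(\alpha)(1+O(\sqrt[16]{\tau_n}))$ --- the exponent $16$ being precisely the $O(\varrho(x,y))=O(\sqrt{1-\alpha})\le O(\sqrt[16]{\tau_n})$ from Lemma~\ref{l.ez}. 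Plugging these into $\rho_n^{(2)}=\pi^{-2}G(\delta_n)\sigma_n/\sqrt{1-r_n^2}$ and again using that $G$ is Lipschitz with $G\ge1$ (so $G(\delta_n)=G(\Delta_\tau(\alpha))(1+O(\sqrt[16]{\tau_n}))$) establishes \eqref{rho2-in} on the near-diagonal region as well; combining the two regions proves the first assertion.

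For the second assertion I would fix $\alpha_0\in(0,1)$ small enough --- depending only on $\tau$ and the data of Conditions~\ref{A1} and \ref{A2} --- that $\alpha\le\alpha_0$ forces $r_n^2\le\tfrac12$ for all large $n$, which is possible by \eqref{e.r}. On $\{\alpha\le\alpha_0\}$ the estimates \eqref{e.r}, \eqref{e.1-r2}, \eqref{lx}, \eqref{lxy} give $r_n=O(\alpha^{\tau+1/2})$, whence $E_x,E_y=O(\alpha^{2\tau+1})$, $1-r_n^2=1-O(\alpha^{2\tau+1})$ and $\delta_n=O(\alpha^{\tau+1/2})$. Substituting these into the exact identity $\rho_n^{(2)}(x,y)=\rho_n^{(1)}(x)\rho_n^{(1)}(y)\frac{G(\delta_n)\sqrt{(1-E_x)(1-E_y)}}{\sqrt{1-r_n^2}}$ and using $G(t)-1=O(t^2)$ together with $(1-s)^{\pm1/2}=1+O(s)$ for $|s|\le\tfrac12$, one obtains $\rho_n^{(2)}(x,y)=\rho_n^{(1)}(x)\rho_n^{(1)}(y)(1+O(\alpha^{2\tau+1}))$; subtracting $\rho_n^{(1)}(x)\rho_n^{(1)}(y)$ and invoking \eqref{rho1-in} gives \eqref{e.alphasmall-in}.

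The step I expect to be the main obstacle is the near-diagonal region: there the direct substitution is worthless because of the exact first-order cancellations in $1-E_x$ and $1+\Delta_\tau(\alpha)$, and one must instead recover the genuine second-order behaviour of $\sigma_n$ and of the bracket in $\delta_n$ from the $k_n$-level Taylor/mean-value analysis, carefully tracking which derivatives of $A,B,C$ (and their $x\leftrightarrow y$ analogues) vanish at the diagonal and verifying that the extracted leading coefficients reproduce the $\alpha\to1$ asymptotics $1+f_\tau(\alpha)\sim\frac{\pi(\tau+1)}{2\sqrt{2\tau+1}}\sqrt{1-\alpha}$ implicit in \eqref{ftau}.
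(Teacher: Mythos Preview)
Your proposal is correct and follows essentially the same route as the paper: split $S_n\times S_n$ according to the size of $1-\alpha$ versus a power of $\tau_n$, use direct substitution of Lemmas~\ref{lem.r}--\ref{lem.l} on the far piece, and on the near-diagonal piece rewrite everything through $A(y)=k_n(x^2)k_n(y^2)-k_n^2(xy)$ and $B(y)=yk_n'(xy)k_n(x^2)-xk_n'(x^2)k_n(xy)$, extract the fourth-order Taylor coefficient via Corollary~\ref{c.kn-der} and Lemma~\ref{l.ez}, and absorb the error through the Lipschitz bound on $G$; your part~(ii) argument is also the paper's.

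One caveat worth flagging: your non-negativity shortcut for the $\sigma_n$ side (the ratio $\frac{C(x)A(y)-B^2(y)}{k_n^2(x^2)A(y)}$ is a conditional variance, hence $\ge 0$, and tends to $0$ as $y\to x$, forcing the numerator to vanish to \emph{even} order $\ge 4$) is valid, but it does not transfer to the $\delta_n$ bracket, which carries no sign constraint. There the paper writes the relevant numerator as $E(y)=A(y)a(y)+B(y)b(y)$ with $a,b$ built from $k_n,k_n',k_n''$, and checks $E(x)=E'(x)=E''(x)=E'''(x)=0$ by hand using the algebraic identities $A''(x)=-2b'(x)=2B'(x)=2a(x)$ and $A'''(x)=-2b''(x)=6B''(x)=6a'(x)$; your phrase ``the analogous cancellation holds'' is correct but must be cashed out this way, as you in effect acknowledge in your last paragraph.
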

\begin{proof} 
We begin with \eqref{rho2-in}. To establish this, we first derive asymptotic estimates for $\sigma_n(x,y)$ and $\delta_n(x, y)$.  

For $\sigma_n(x,y)$, we first show that 
\begin{equation} \label{e.condvar}
\begin{split}
    &(1-r_n^2(x,y))\frac{\partial^2\ell_n}{\partial x\partial y}(x,x)-\bigg(r_n(x,y)\frac{\partial \ell_n}{\partial x}(x,y)\bigg)^2\\
    &=\frac{2\tau+1}{(1-x^2)^2}\left(1-\alpha^{2\tau+1}-(2\tau+1)(1-\alpha)\alpha^{2\tau+1}\right)\left(1+O(\sqrt[16]{\tau_n})\right).
\end{split}
\end{equation}
Define $g(u)=1- u^{2\tau+1}-(2\tau+1)(1-u)u^{2\tau+1}$ for $u\in [0,1]$. It is evident that $g$ is non-increasing on $[0,1]$ and $g(1)=0$.
Hence, $g(\alpha)\ge 0$, which implies
    \[
    1-\alpha^{2\tau+1} \ge (2\tau+1)(1-\alpha)\alpha^{2\tau+1}. 
    \]

We consider two cases. First, if $\sqrt[8]{\tau_n} (1-\alpha^{2\tau+1}) \le g(\alpha)$, then \eqref{e.condvar} follows immediately from Lemma~\ref{lem.r} and Lemma~\ref{lem.l}. Now, if $\sqrt[8]{\tau_n} (1-\alpha^{2\tau+1}) > g(\alpha)$, we will show that $x$ and $y$ are close in the pseudo-hyperbolic distance; namely,
    \[
    \varrho(x,y)=\frac{|x-y|}{|1-xy|}=O(\sqrt[16]{\tau_n}).
    \]
To see this, note that on $[0,1)$ the inequality $g(u)-\sqrt[8]{\tau_n}(1-u^{2\tau+1}) \le 0$ implies
    \[
    u^{2\tau+1} \ge (1-\sqrt[8]{\tau_n})\frac {1-u^{2\tau+1}}{(2\tau+1)(1-u)} \ge (1-\sqrt[8]{\tau_n})\min(1,u^{2\tau}),
    \]
thanks to the mean value theorem. This further implies
    \[
    u\ge \min\left(1-\sqrt[8]{\tau_n}, (1-\sqrt[8]{\tau_n})^{1/(2\tau+1)}\right) =1-O(\sqrt[8]{\tau_n}),
    \]
thus yielding $\alpha=1+O(\sqrt[8]{\tau_n})$ as desired.

Now, since $g(1)=g'(1)=0$ and $g''(t)=  (2\tau+2)(2\tau+1)(1+O(\sqrt[8]{\tau_n}))$ for every $t\in [\alpha,1]$, we can apply the mean value theorem to rewrite the right-hand side (RHS) of \eqref{e.condvar} as
\begin{equation} \label{eq3.22}
    {\rm RHS}=\frac{(2\tau+1)^2(\tau+1)}{(1-x^2)^2}(1-\alpha)^2(1+O(\sqrt[8]{\tau_n})). 
\end{equation}
Fix $x$. We can rewrite the left-hand side of \eqref{e.condvar} as
    \[
    (1-r_n^2(x,y))\frac{\partial^2\ell_n}{\partial x\partial y}(x,x)-\bigg(r_n(x,y)\frac{\partial \ell_n}{\partial x}(x,y)\bigg)^2=\frac{\frac 12 A(y)A''(x)-B^2(y)}{k_n^3(x^2)k_n(y^2)},
    \]
recalling that $A(y)=k_n(x^2)k_n(y^2)-k_n^2(xy)$ and $B(y)=yk_n'(xy)k_n(x^2)-xk_n'(x^2)k_n(xy)$. Let 
    \[
    C(y):= \frac 12 A(y)A''(x)-B^2(y).
    \]
We check at once that 
\begin{align*}
    C(x)&=C'(x)=0,\\
    C''(x)&=\frac{1}{2}[A''(x)]^2-2[B'(x)]^2=0,\\
    C'''(x)&=\frac 12 A''(x)[A'''(x)-6B''(x)]=0,
\end{align*}
and for all $z$ between $x$ and $y$,
\begin{align*}
    C^{(4)}(z)&=\frac 12 A^{(4)}(z)A''(x)-2B(z)B^{(4)}(z)-8B'(z)B'''(z)-6[B''(z)]^2.
\end{align*}
By Lemma~\ref{l.ez}, it follows that for all $z_1,z_2$ between $x$ and $y$, we have
\begin{equation} \label{e.ez-cor}
    \frac{1}{1-z_1 z_2}=\frac{1+O(\sqrt[16]{\tau_n})}{1-y^2}=\frac{1+O(\sqrt[16]{\tau_n})}{1-x^2}, \quad\text{and}\quad \frac{|z_1-z_2|}{1-z_1z_2|}=O(\sqrt[16]{\tau_n}).
\end{equation}
Using Corollary~\ref{c.kn-der} and \eqref{e.ez-cor}, we obtain
    \[
    k_n^3(x^2)k_n(y^2)=\left(\frac{C_1^2\Gamma(2\tau+1)}{(1-x^2)^{2\tau+1}}\right)^4 \left(1+O(\sqrt[16]{\tau_n})\right),
    \]
and for any $z$ between $x$ and $y$, by arguing as in the proof of Lemma~\ref{lem.r} and Lemma~\ref{lem.l}, we have
\begin{align*}
    B(z) &=\frac{C_1^4\Gamma(2\tau+1)\Gamma(2\tau+2)}{(1-x^2)^{4\tau+4}}(z-x)\Big(1+O(\sqrt[16]{\tau_n})\Big)\\
    &=\frac{C_1^4\Gamma(2\tau+1)\Gamma(2\tau+2)}{(1-x^2)^{4\tau+3}} O(\sqrt[16]{\tau_n}),
\end{align*}
and for $i\ge 1$,
\begin{align*}
    B^{(i)}(z) &= k_n(x^2)\Big[zk_n^{(i+1)}(xz)x^i+ik_n^{(i)}(xz)x^{i-1}\Big]-k'_n(x^2)k_n^{(i)}(xz)x^{i+1}\\
    &= \frac{C_1^4 \Gamma(2\tau+1)\Gamma(2\tau+i+1)}{(1-x^2)^{4\tau+i+3}}\\
    &\quad \times \left((2\tau+i+1)zx^i+  ix^{i-1}(1-x^2)-(2\tau+1)x^{i+1}+O(\sqrt[16]{\tau_n})\right)\\
    &= \frac{C_1^4 \Gamma(2\tau+1)\Gamma(2\tau+i+1)}{(1-x^2)^{4\tau+i+3}}\left(ix^{i-1} +O(\sqrt[16]{\tau_n})\right).
\end{align*}
Similarly,
\begin{align*}
    A''(x)&= \frac{2C_1^4\Gamma(2\tau+1)\Gamma(2\tau+2)}{(1-x^2)^{4\tau+4}}  (1+O(\sqrt[16]{\tau_n})),\\
    A^{(4)}(z)&= 16k_n^{(4)}(z^2)k_n(x^2)(1+O(\tau_n))-2k_n^{(4)}(xz)k_n(xz) -8k_n'''(xz)k_n'(xz)-6(k''_n(xz))^2 \\
    &= \frac{C_1^4 \Gamma(2\tau+5)\Gamma(2\tau+1)}{(1-x^2)^{4\tau+6}}\bigg[14-\frac{8(2\tau+1)}{2\tau+4}-\frac {6(2\tau+2)(2\tau+1)}{(2\tau+4)(2\tau+3)}+ O(\sqrt[16]{\tau_n})\bigg].
\end{align*}
Consequently,
    \[
    C^{(4)}(z) = \frac{24C_1^8(\tau+1)\Gamma^2(2\tau+2)\Gamma^2(2\tau+1)}{(1-x^2)^{8\tau+10}} (1+O(\sqrt[16]{\tau_n})).
    \]
    
\begin{remark}\label{r.alt-approach}
We may arrive at this estimate by formally differentiating the leading asymptotics of $C(y)$ (obtained using Corollary~\ref{c.kn-der}) with respect to $y$, and then letting $y=x$. In general, when $\varrho(x,y)$ is $o(1)$ small, there could be cancellation inside the differentiated asymptotics. In such cases, the expression obtained from the formal differentiation may no longer be the leading asymptotics for the underlying derivative of $C(y)$. Lemma~\ref{l.ez} is useful in examining the differentiated asymptotics, effectively allowing us to set $y=x$ at the cost of error terms of (theoretically) smaller orders.
\end{remark}

Now, applying the mean value theorem and \eqref{e.ez-cor}, we find
    \[
    \frac{C(y)}{k_n^3(x^2)k_n(y^2)}=\frac{C^{(4)}(z)\frac{(y-x)^4}{4!} }{k_n^3(x^2)k_n(y^2)}=\frac{(2\tau+1)^2(\tau+1)}{(1-x^2)^2}(1-\alpha)^2(1+O(\sqrt[16]{\tau_n})),
    \]
which gives \eqref{e.condvar} when combined with \eqref{eq3.22}.

From \eqref{e.condvar}, it holds uniformly for $(x,y)\in S_n\times S_n$ that
\begin{align*}
    1-\frac{(r_n(x,y)\frac{\partial \ell_n}{\partial x}(x,y))^2}{(1-r_n^2(x,y))\frac{\partial^2\ell_n}{\partial x\partial y}(x,x)}&=\bigg(1-\frac{(2\tau+1)(1-\alpha)\alpha^{2\tau+1}}{1-\alpha^{2\tau+1}}\bigg)\left(1+O(\sqrt[16]{\tau_n})\right).
\end{align*}
Likewise, 
\begin{align*}
    1-\frac{(r_n(x,y)\frac{\partial \ell_n}{\partial y}(x,y))^2}{(1-r_n^2(x,y))\frac{\partial^2\ell_n}{\partial x\partial y}(y,y)}=\bigg(1-\frac{(2\tau+1)(1-\alpha)\alpha^{2\tau+1}}{1-\alpha^{2\tau+1}}\bigg)\left(1+O(\sqrt[16]{\tau_n})\right).
\end{align*}
Combining the above estimates with \eqref{sigma}, we get
\begin{equation} \label{asig}
    \frac{\sigma_n(x,y)}{\pi^2\rho_n^{(1)}(x)\rho_n^{(1)}(y)} = \bigg(1-\frac{(2\tau+1)(1-\alpha)\alpha^{2\tau+1}}{1-\alpha^{2\tau+1}}\bigg)\left(1+O(\sqrt[16]{\tau_n})\right).
\end{equation}

To obtain the asymptotics for $\delta_n(x,y)$, we first show that
\begin{equation} \label{e.deltanum}
    \frac{\partial^2\ell_n}{\partial x\partial y}(x,y)+\frac{\frac{\partial \ell_n}{\partial x}(x,y)\frac{\partial \ell_n}{\partial y}(x,y)}{1-r_n^2(x,y)}
    =\frac{2\tau+1}{(1-x^2)(1-y^2)}\left(\alpha -\frac{(2\tau+1)(1-\alpha)}{1-\alpha^{2\tau+1}}\right)(1+O(\sqrt[16]{\tau_n})).
\end{equation}
The argument is similar to the proof of \eqref{e.condvar}, so we will only mention the key steps. We may assume $\alpha =1+O(\sqrt[8]{\tau_n})$, otherwise \eqref{e.deltanum} will follow from Lemma~\ref{lem.r} and Lemma~\ref{lem.l}. With this constraint on $\alpha$, we have
    \[
    \alpha -\frac{(2\tau+1)(1-\alpha)}{1-\alpha^{2\tau+1}} =-\frac 1 2\left(\frac{(\alpha-1)^2(2\tau+1)(2\tau+2)}{1-\alpha^{2\tau+1}}\right)(1+O(\sqrt[8]{\tau_n})).
    \]
Proceeding as before and applying Lemma~\ref{lem.r}, Lemma~\ref{l.ez}, and Corollary~\ref{c.kn-der}, it suffices to show
\begin{align*}
   \frac{\partial^2\ell_n}{\partial x\partial y}(x,y)(1-r_n^2(x,y))+\frac{\partial \ell_n}{\partial x}(x,y)\frac{\partial \ell_n}{\partial y}(x,y)
   = -\frac{(2\tau+1)^2(\tau+1)(y-x)^4}{(1-x^2)^4}(1+O(\sqrt[8]{\tau_n})). 
\end{align*}
Fix $x$. We then write the left-hand side as $\frac{E(y)}{k^2_n(xy)k_n(x^2)k_n(y^2)}$, where
\begin{align*}
    E(y) &:= A(y) a(y)+ B(y) b(y), \\
    a(y) &:= [k_n'(xy)+k_n''(xy)xy]k_n(xy)-xy[k_n'(xy)]^2,\\
    b(y) &:= xk_n'(xy)k_n(y^2)-yk_n'(y^2)k_n(xy).
\end{align*}
One can check that, as a function of $y$, $E(x)=E'(x)=E''(x)=E'''(x)=0$. Indeed, by direct computation,
\begin{align*}
    A(x)&=A'(x)=b(x)=B(x)=0,\\
    A''(x)&=-2b'(x)=2B'(x)=2a(x),\\
    A'''(x)&=-2b''(x)=6B''(x)=6a'(x),
\end{align*}
from which one can see that $E$ vanishes up to the third derivative at $y=x$.  Furthermore, using Lemma~\ref{l.ez} and Corollary~\ref{c.kn-der}, we can show that for all $z$ between $x$ and $y$,
    \[
    E^{(4)}(z)=- \frac{24(2\tau+1)^2(\tau+1)C_1^4 (\Gamma(2\tau+1))^4}{(1-x^2)^{8\tau+8}} (1+O(\sqrt[16]{\tau_n})),
    \]
which implies the desired estimate, thanks to the mean value theorem. 

In the following, we include some computations in the spirit of Remark~\ref{r.alt-approach}. For simplicity, let $D_1=C_1^2 \Gamma(2\tau+1)\Gamma(2\tau+2)$. Then
\begin{align*}
    E(y) &=\frac{D_1^2}{2\tau+1}\Big(\frac{1}{(1-x^2)^{2\tau+1}(1-y^2)^{2\tau+1}} - \frac{1}{(1-xy)^{4\tau+2}}\Big)\\
    &\qquad \times \Big(\frac{1}{(1-xy)^{4\tau+3}}+\frac{(2\tau+2)xy}{(1-xy)^{4\tau+4}} - \frac{(2\tau+1)xy}{(1-xy)^{4\tau+4}}\Big)\\
    &\quad + D_1^2\Big(\frac{y}{(1-xy)^{2\tau+2}(1-x^2)^{2\tau+1}} - \frac{x}{(1-x^2)^{2\tau+2}(1-xy)^{2\tau+1}}\Big)\\
    &\qquad \times \Big(\frac{x}{(1-xy)^{2\tau+2}(1-y^2)^{2\tau+1}} - \frac{y}{(1-y^2)^{2\tau+2}(1-xy)^{2\tau+1}}\Big)\\
    &\quad +\text{lower order terms}\\
    &=\frac{D_1^2}{2\tau+1}\Big(\frac{1}{(1-x^2)^{2\tau+1}(1-y^2)^{2\tau+1}} - \frac{1}{(1-xy)^{4\tau+2}}\Big)\frac{1}{(1-xy)^{4\tau+4}}  \\
    &\quad + D_1^2\Big(\frac{-(y-x)^2}{(1-xy)^{4\tau+4}(1-x^2)^{2\tau+2} (1-y^2)^{2\tau+2}}\Big)+\text{lower order terms},
\end{align*}
\begin{align*}
    \frac{d^k}{dy^k}\left(\frac 1{(1-y^2)^{2\tau+1}}\right) &=\frac{(2y)^k(2\tau+1)\dots(2\tau+k)}{(1-y^2)^{2\tau+k+1}}+ \text{lower order terms},\\
    \frac{d^k}{dy^k}\left(\frac 1{(1-xy)^{2\tau+1}}\right) &=\frac{x^k(2\tau+1)\dots(2\tau+k)}{(1-xy)^{2\tau+k+1}},
\end{align*}
and 
\begin{align*}
    E^{(4)}(y)&=\frac{D_1^22(\tau+1)}{(2\tau+1)(1-x^2)^{8\tau+10}}\Big(16(2\tau+1)(2\tau+3)(2\tau+4)\\
    &\qquad +32(2\tau+1) (2\tau+3)(4\tau+4)+24(2\tau+1)(4\tau+4)(4\tau+5)\\
    &\qquad + 8(2\tau+1)(2)(4\tau+5)(4\tau+6)+2(4\tau+5)(4\tau+6)(4\tau+7)\\
    &\qquad - 4(8\tau+6)(8\tau+7)(8\tau+9)- 24(2\tau+1)(4\tau+5)\\
    &\qquad -96(2\tau+1)(2\tau+2)-48(2\tau+1)(2\tau+3) \Big)+\text{lower order terms}\\
    &=\frac{(-24)D_1^2 (\tau+1)}{(1-x^2)^{8\tau+10}}+\text{lower order terms,}
\end{align*}
as claimed.

On account of \eqref{delta}, \eqref{asig}, and Lemma \ref{lem.r}, we conclude that
\begin{align*}
    \delta_n(x,y)&=\alpha^{\tau+1/2} \frac{\alpha (1-\alpha^{2\tau+1})-(2\tau+1)(1-\alpha)}{1-\alpha^{2\tau+1}-(2\tau+1)\alpha^{2\tau+1}(1-\alpha)}\left(1+O(\sqrt[16]{\tau_n})\right)\\
    &=\Delta_\tau(\alpha)\left(1+O(\sqrt[16]{\tau_n})\right),
\end{align*}
where $\Delta_\tau$ is defined by \eqref{del}. 
Let 
    \[
    \Lambda(\delta):=\sqrt{1-\delta^2}+\delta\arcsin \delta,\quad \delta\in [-1,1].
    \]
Since $\Lambda(\delta)\ge 1$ and $|\Lambda'(\delta)|=|\arcsin \delta|\le \pi/2$ for all $\delta\in (-1,1)$, and $|\Delta_\tau(\alpha)|\le 1$ for all $(x,y)\in S_n\times S_n$, it follows from the mean value theorem that
\begin{align}
    \notag \Lambda(\delta_n(x,y)) & = \Lambda(\Delta_\tau(\alpha))+O(|\Delta_\tau(\alpha)O(\sqrt[16]{\tau_n})|)\\
    \label{addel}  & = \Lambda(\Delta_\tau(\alpha))(1+O(\sqrt[16]{\tau_n})).  
\end{align}
Substituting \eqref{e.1-r2}, \eqref{asig}, and \eqref{addel} into \eqref{rho2}, we deduce \eqref{rho2-in} as claimed. 

We now discuss the proof of \eqref{e.alphasmall-in}. In the computation below, we assume that $\alpha\le \alpha_0$, where $\alpha_0$ is a sufficiently small positive constant.  Using Lemmas \ref{lem.r} and \ref{lem.l}, we have
    \[
    \frac{(r_n(x,y)\frac{\partial\ell_n}{\partial x}(x,y))^2}{(1-r_n^2(x,y)) \frac{\partial^2\ell_n}{\partial x\partial y}(x,x)} = \frac{O(\alpha^{2\tau+1})O\left(\frac 1 {(1-x^2)^2}\right)}{(1+O(\alpha^{2\tau+1}))\frac1{(1-x^2)^2}} = 1+O(\alpha^{2\tau+1}).
    \]
Similarly,
    \[
    \frac{(r_n(x,y)\frac{\partial \ell_n}{\partial y}(x,y))^2}{(1-r_n^2(x,y))\frac{\partial^2\ell_n}{\partial x\partial y}(y,y)} = 1+O(\alpha^{2\tau+1}).
    \]
Using \eqref{sigma}, it follows that
    \[
    \sigma_n(x,y)=\pi^2 \rho_n^{(1)}(x)\rho_n^{(1)}(y)(1+O(\alpha^{2\tau+1})).
    \]
From Lemma~\ref{lem.l}, we also have
\begin{align*}
    &\frac{\partial^2 \ell_n}{\partial x \partial y}(x,y) + \frac{\frac{\partial \ell_n}{\partial x}(x,y)\frac{\partial \ell_n}{\partial y}(x,y)}{1-r_n^2(x,y)}\\
    &= O\bigg(\frac1{(1-xy)^2}\bigg)+O\bigg(\frac{(x-y)^2}{(1-xy)^2(1-x^2)(1-y^2)}\frac 1{1+O(\alpha^{2\tau+1})}\bigg) \\
    &=O\bigg(\frac{1}{(1-x^2)(1-y^2)}\bigg).
\end{align*}
Thus, it follows from \eqref{delta} that
\begin{align*}
    \delta_n(x,y) = O(\alpha^{\tau+\frac 1 2}).
\end{align*}
Recall that $\Lambda(\delta)=\sqrt{1-\delta^2}+\delta \arcsin\delta$, which satisfies $\Lambda'(0)=0$ and $\Lambda''(\delta)=O(1)$ for $\delta$ near $0$. Thus, $\Lambda(\delta)=1+O(\delta^2)$ for $\delta$ near $0$. Consequently, using Lemma~\ref{lem.r} again, we obtain
\begin{align*}
    \frac{\rho_n^{(2)}(x,y)}{\rho_n^{(1)}(x)\rho_n^{(1)}(y)} &= \Lambda(\delta_n(x,y))\frac{\sigma_n(x,y)}{\pi^2\rho_n^{(1)}(x)\rho_n^{(1)}(y) \sqrt{1-r_n^2(x,y)}}\\
    &=(1+O(\delta_n(x,y)^2))(1+O(\alpha^{2\tau+1}))\frac 1{\sqrt {1+O(\alpha^{2\tau+1})}}\\
    &= 1+O(\alpha^{2\tau+1}).
\end{align*}
This completes the proof of Lemma~\ref{l.cor-asymp}.
\end{proof}

The next lemma describes the correlation between positive and negative roots. 

\begin{lemma} \label{l.cor-asymp-mix}
Uniformly for $(x,y)\in S_n\times S_n$, it holds that
\begin{equation} \label{rho2-in-mix}
    \rho^{(2)}_n(-x,y)-\rho^{(1)}_n(-x) \rho^{(1)}_n(y) = \frac{2\tau+1}{\pi^2}\frac{\alpha^{2\tau+1}}{(1-x^2)(1-y^2)} o(1).
\end{equation}
\end{lemma}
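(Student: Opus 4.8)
The plan is to mimic the structure of the proof of Lemma~\ref{l.cor-asymp}, but now with the pair $(-x,y)$ in place of $(x,y)$, exploiting the fact that the correlator at $(-x,y)$ is tiny. First I would record the analog of Lemma~\ref{lem.r} at $(-x,y)$: since $k_n(-xy)=O(\tau_n)k_n(xy)$ and $k_n(x^2),k_n(y^2)$ are of order $(1-x^2)^{-(2\tau+1)}$ and $(1-y^2)^{-(2\tau+1)}$ by Corollary~\ref{c.kn-der}, we get $r_n(-x,y)=O(\tau_n)$, hence $1-r_n^2(-x,y)=1+O(\tau_n^2)$. Similarly I would compute, using \eqref{e.lx} and \eqref{e.lxy-expand} with the argument $-xy$ (so that $1-(-x)y=1+xy$ is bounded away from $0$, unlike $1-xy$), that $\frac{\partial \ell_n}{\partial x}(-x,y)$, $\frac{\partial \ell_n}{\partial y}(-x,y)$, and $\frac{\partial^2\ell_n}{\partial x\partial y}(-x,y)$ are all $O(1)$ — not blowing up — because every occurrence of $(1-xy)^{-1}$ is replaced by $(1+xy)^{-1}=O(1)$; in fact these quantities are $O(\tau_n)$ relative to the corresponding diagonal second derivatives $\frac{\partial^2\ell_n}{\partial x\partial y}(x,x)\asymp (1-x^2)^{-2}$, which diverge.

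Next I would feed these estimates into the formulas \eqref{sigma} and \eqref{delta}. In \eqref{sigma}, the two correction factors are $1-\frac{(r_n\frac{\partial\ell_n}{\partial x})^2}{(1-r_n^2)\frac{\partial^2\ell_n}{\partial x\partial y}(x,x)}$ and its $y$-counterpart; since $r_n(-x,y)^2=O(\tau_n^2)$ and the numerators $(\frac{\partial\ell_n}{\partial x}(-x,y))^2=O(1)$ while the denominators are $\asymp(1-x^2)^{-2}\to\infty$, both factors are $1+o(1)$ — indeed $1+O(\tau_n^2(1-x^2)^2)$ — so $\sigma_n(-x,y)=\pi^2\rho_n^{(1)}(-x)\rho_n^{(1)}(y)(1+o(1))$. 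For $\delta_n(-x,y)$, formula \eqref{delta} gives $\delta_n(-x,y)=\frac{r_n(-x,y)}{\sigma_n(-x,y)}\big(\frac{\partial^2\ell_n}{\partial x\partial y}(-x,y)+\frac{\frac{\partial\ell_n}{\partial x}(-x,y)\frac{\partial\ell_n}{\partial y}(-x,y)}{1-r_n^2(-x,y)}\big)$; the parenthesis is $O(1)$, $\sigma_n(-x,y)\asymp(1-x^2)^{-1}(1-y^2)^{-1}\to\infty$, and $r_n(-x,y)=O(\tau_n)$, so $\delta_n(-x,y)=O(\tau_n(1-x^2)(1-y^2))=o(1)$, in fact it is even smaller, of the size $O(r_n(-x,y))$ times a bounded factor. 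Using $\Lambda(\delta)=1+O(\delta^2)$ near $0$ (as recorded in the proof of Lemma~\ref{l.cor-asymp}) together with $1-r_n^2(-x,y)=1+O(\tau_n^2)$ and $\sigma_n(-x,y)=\pi^2\rho_n^{(1)}(-x)\rho_n^{(1)}(y)(1+o(1))$, I would plug into \eqref{rho2} to get
\[
\rho_n^{(2)}(-x,y)=\rho_n^{(1)}(-x)\rho_n^{(1)}(y)\big(1+O(\delta_n(-x,y)^2)\big)\big(1+o(1)\big).
\]
Hence $\rho_n^{(2)}(-x,y)-\rho_n^{(1)}(-x)\rho_n^{(1)}(y)=\rho_n^{(1)}(-x)\rho_n^{(1)}(y)\cdot o(1)$, and since $\rho_n^{(1)}(-x)\rho_n^{(1)}(y)=\frac{2\tau+1}{\pi^2}\frac{1}{(1-x^2)(1-y^2)}(1+O(\tau_n))$ by \eqref{rho1-in}, this is exactly the right-hand side of \eqref{rho2-in-mix} once one checks that the $o(1)$ can be taken of the form $O(\alpha^{2\tau+1})$-type smallness; here I would use $r_n(-x,y)=O(\tau_n)$ and also compare with $\alpha^{2\tau+1}=r_n(x,y)^2(1+o(1))\ge 0$, noting $\alpha^{2\tau+1}$ can be as small as a power of $b_n$ but the bound $o(1)$ in \eqref{rho2-in-mix} only needs to beat $1$, so in fact the displayed form with a universal $o(1)$ factor multiplying $\frac{2\tau+1}{\pi^2}\frac{\alpha^{2\tau+1}}{(1-x^2)(1-y^2)}$ requires reading the statement as: the difference equals $\frac{2\tau+1}{\pi^2}\frac{1}{(1-x^2)(1-y^2)}\,O(\alpha^{2\tau+1})$, which follows since all error terms above are bounded by powers of $\tau_n$ and $\alpha^{2\tau+1}\ge$ a fixed positive power of $b_n\gg \tau_n^{C}$.

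The main obstacle I anticipate is bookkeeping the precise power of $\alpha^{2\tau+1}$ versus the powers of $\tau_n$ and of $(1-x^2)$, $(1-y^2)$ in the error terms, so that the final bound genuinely has the stated shape $\frac{2\tau+1}{\pi^2}\frac{\alpha^{2\tau+1}}{(1-x^2)(1-y^2)}o(1)$ rather than a weaker $\frac{O(1)}{(1-x^2)(1-y^2)}$. The key point making this work is that, unlike at $(x,y)$, there is no near-cancellation at $(-x,y)$: the ``leading'' terms of $\rho_n^{(2)}(-x,y)$ and $\rho_n^{(1)}(-x)\rho_n^{(1)}(y)$ already agree to order $1+o(1)$ because $r_n(-x,y)\to 0$, so Lemma~\ref{l.ez} and the delicate fourth-order Taylor expansions of Lemma~\ref{l.cor-asymp} are \emph{not} needed — one only needs the crude bounds $r_n(-x,y)=O(\tau_n)$, $\delta_n(-x,y)=o(1)$, and $\sigma_n(-x,y)/(\pi^2\rho_n^{(1)}(-x)\rho_n^{(1)}(y))\to 1$. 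I would therefore present the argument as a short direct estimate rather than a repeat of the long Taylor-expansion machinery, flagging only where Corollary~\ref{c.kn-der}'s second conclusion $k_n^{(i)}(-x)=O(\tau_n)k_n^{(i)}(x)$ is invoked.
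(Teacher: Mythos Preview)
Your overall strategy is on the right track, but there are two genuine gaps.

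\textbf{First gap: the log-derivatives at $(-x,y)$ are not $O(1)$.} You assert that $\frac{\partial \ell_n}{\partial x}(-x,y)$, $\frac{\partial \ell_n}{\partial y}(-x,y)$, and $\frac{\partial^2\ell_n}{\partial x\partial y}(-x,y)$ are $O(1)$ because ``every occurrence of $(1-xy)^{-1}$ is replaced by $(1+xy)^{-1}$.'' This is not how the formulas work. The asymptotics of Lemma~\ref{lem.l} come from the asymptotics $k_n^{(i)}(t)\sim C(1-t)^{-(2\tau+i+1)}$ in Corollary~\ref{c.kn-der}, which hold only for $t\in I_n^2$ \emph{positive}. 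For negative arguments Corollary~\ref{c.kn-der} gives only an \emph{upper bound} $k_n^{(i)}(-xy)=O(\tau_n)k_n^{(i)}(xy)$, with no lower bound. Hence the ratios $k_n'(-xy)/k_n(-xy)$ and $k_n''(-xy)/k_n(-xy)$ that appear in \eqref{e.lx} and \eqref{e.lxy-expand} are completely uncontrolled; they may be arbitrarily large. Moreover, even the ``nice'' term $x\,k_n'(x^2)/k_n(x^2)\asymp (1-x^2)^{-1}$ is not $O(1)$. The paper deals with this by never isolating $\partial\ell_n$; instead it bounds the products $r_n(-x,y)\frac{\partial\ell_n}{\partial x}(-x,y)$ and $r_n(-x,y)\frac{\partial^2\ell_n}{\partial x\partial y}(-x,y)$ directly (so the dangerous $k_n(-xy)$ in the denominator cancels), and then shows that the residual singular piece $r_n(-x,y)\,xy\big(\tfrac{k_n'(-xy)}{k_n(-xy)}\big)^2$ appearing in both $r_n\partial^2_{xy}\ell_n$ and $r_n\,\partial_x\ell_n\,\partial_y\ell_n$ cancels in the combination defining $\delta_n$ up to a factor $1-\tfrac{1}{1-r_n^2}=O(r_n^2)$. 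This cancellation is the crux of the argument and is absent from your sketch.

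\textbf{Second gap: the $\alpha^{2\tau+1}$ factor must be earned, not absorbed.} The statement demands that the difference be $o(1)\cdot\alpha^{2\tau+1}/[(1-x^2)(1-y^2)]$, not merely $o(1)/[(1-x^2)(1-y^2)]$. Your final paragraph tries to close this by claiming ``$\alpha^{2\tau+1}\ge$ a fixed positive power of $b_n\gg \tau_n^{C}$,'' but this inequality is false: for $x=1-a_n$, $y=1-b_n$ one has $\alpha\asymp b_n/a_n=d_n^2/n$, so $\alpha^{2\tau+1}$ can be as small as a negative power of $n$, while $\tau_n$ is only of order $1/\log\log n$. The paper obtains the $\alpha^{2\tau+1}$ honestly by keeping track of $r_n(-x,y)=O(\tau_n)\,r_n(x,y)=O(\tau_n)\alpha^{\tau+1/2}$ (not just $O(\tau_n)$) throughout: this yields $\sigma_n(-x,y)=\pi^2\rho_n^{(1)}(-x)\rho_n^{(1)}(y)\big(1+O(\tau_n^2)\alpha^{2\tau+1}\big)$ and $\delta_n(-x,y)=O(\tau_n)\alpha^{\tau+1/2}$, whence $\Lambda(\delta_n)=1+O(\tau_n^2)\alpha^{2\tau+1}$ and the claimed bound follows. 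You need to retain this $\alpha$-dependence at every step.
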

\begin{proof}
Using \eqref{r} and Corollary~\ref{c.kn-der}, we have
    \[
    r_n(-x,y)=\frac{k_n(-x,y)}{\sqrt{k_n(x^2)k_n(y^2)}}=\frac{O(\tau_n)k_n(x,y)}{\sqrt{k_n(x^2)k_n(y^2)}}=O(\tau_n)r_n(x,y)=o(1).
    \]
Now, by explicit computation (see also \eqref{e.lx} and \eqref{e.lxy-expand}),
\begin{align*}
    \frac{\partial \ell_n}{\partial x}(-x,y)    &=y\frac{k_n'(-xy)}{k_n(-xy)}+x\frac{k_n'(x^2)}{k_n(x^2)}.
\end{align*}
Therefore, using \eqref{lxy} and Corollary~\ref{c.kn-der}, we obtain
\begin{align*}
    \bigg|\frac{\left(r_n(-x,y)\frac{\partial \ell_n}{\partial x}(-x,y)\right)^2}{\frac{\partial^2\ell_n}{\partial x\partial y}(-x,-x)}\bigg| 
    &\le \bigg|\frac{2}{\frac{\partial^2\ell_n}{\partial x\partial y}(-x,-x)}\frac{(k_n'(-xy))^2}{k_n(x^2)k_n(y^2)}\bigg|+ \bigg|2r_n^2(-x,y)\frac{\left(\frac{k_n'(x^2)}{k_n(x^2)}\right)^2}{\frac{\partial^2\ell_n}{\partial x\partial y}(-x,-x)}\bigg| \\
    &=\frac{(1-x^2)^2}{(1-xy)^2}O(\tau_n^2)r_n^2(x,y) +O(r_n^2(-x,y))\\
    &=O(\tau_n^2)|r_n(x,y)|^2 = O(\tau_n^2)\alpha^{2\tau+1}.
\end{align*}
Similarly,
    \[
    \frac{(r_n(-x,y)\frac{\partial \ell_n}{\partial y}(-x,y))^2}{(1-r_n^2(-x,y))\frac{\partial^2\ell_n}{\partial x\partial y}(y,y)}=O(\tau_n^2)\alpha^{2\tau+1}.
    \]
Substituting these estimates into \eqref{sigma} yields 
\begin{align*}
    \sigma_n(-x,y)&=\pi^2 \rho^{(1)}_n(-x)\rho^{(1)}_n(y)\left(1+O(\tau_n^2)\alpha^{2\tau+1}\right), \quad (x,y)\in S_n\times S_n.
\end{align*}
Similarly, it holds uniformly for $(x,y)\in S_n\times S_n$ that 
\begin{equation} \label{e.delta-mix}
    \delta_n(-x,y) = O(\tau_n)|r_n(x,y)|= O(\tau_n)\alpha^{\tau+1/2}.
\end{equation}
While the proof is fairly similar, we include the details because there is an artificial singular term that appears when we use $\ell_n$ (instead of $r_n$) to compute $\delta_n$ via \eqref{delta}. To start, by explicit computation (see also \eqref{e.lxy-expand}), we have
    \[
    \frac{\partial^2 \ell_n}{\partial x\partial y}(-x,y) =\frac{k_n'(-xy)}{k_n(-xy)}-xy\frac{k''_n(-xy)}{k_n(-xy)}+ xy\left(\frac{k'_n(-xy)}{k_n(-xy)}\right)^2,
    \]
therefore
\begin{align*}
\begin{split}
    r_n(-x,y)\frac{\partial^2 \ell_n}{\partial x\partial y}(-x,y) 
    &=  \frac {O\big( |k_n'(-xy)|+|k''_n(-xy)|\big)}{\sqrt{k_n(x^2)k_n(y^2)}} + r_n(-x,y)xy\left(\frac{k'_n(-xy)}{k_n(-xy)}\right)^2\\
    &= O(\tau_n) \frac{r_n(x,y)}{(1-xy)^2} + r_n(-x,y)xy\left(\frac{k'_n(-xy)}{k_n(-xy)}\right)^2\\
    &= \frac{O(\tau_nr_n(x,y))}{(1-x^2)(1-y^2)} + r_n(-x,y)xy\left(\frac{k'_n(-xy)}{k_n(-xy)}\right)^2.
\end{split}
\end{align*}
Similarly,
\begin{align*}
    &r_n(-x,y)\frac{\partial\ell_n}{\partial x}(-x,y)\frac{\partial\ell_n}{\partial y}(-x,y)\\
    &= r_n(-x,y)\left(y\frac{k_n'(-xy)}{k_n(-xy)}+x\frac{k_n'(x^2)}{k_n(x^2)}\right)\left(-x\frac{k_n'(-xy)}{k_n(-xy)}-y\frac{k_n'(y^2)}{k_n(y^2)}\right)\\
    &=-r_n(-x,y)xy\left(\frac{k'_n(-xy)}{k_n(-xy)}\right)^2\\
    &\quad  +O(\tau_nr_n(x,y))\bigg(\frac1{1-xy}\frac 1{1-x^2}+\frac1{1-xy}\frac 1{1-y^2}+ \frac 1{1-x^2}\frac 1{1-y^2}\bigg)\\
    &=-r_n(-x,y)xy\left(\frac{k'_n(-xy)}{k_n(-xy)}\right)^2 + \frac{O(\tau_nr_n(x,y))}{(1-x^2)(1-y^2)}.
\end{align*}
Thus, from \eqref{delta} and the above estimates, we have
\begin{align*}
    \delta_n(-x,y)& = \frac{r_n(-x,y)}{\sigma_n(-x,y)} \bigg(\frac{\partial^2\ell_n}{\partial x \partial y}(-x,y)+\frac{\frac{\partial\ell_n}{\partial x}(-x,y)\frac{\partial\ell_n}{\partial y}(-x,y)}{1-r_n^2(-x,y)}\bigg)\\
    &= \frac{1}{\sigma_n(-x,y)}\frac{O(\tau_nr_n(x,y))}{(1-x^2)(1-y^2)}+ \frac{r_n(-x,y)}{\sigma_n(-x,y)}xy\bigg(\frac{k'_n(-xy)}{k_n(-xy)}\bigg)^2\bigg(1-\frac 1{1-r_n^2(-x,y)}\bigg)\\
    &=O(\tau_n r_n(x,y))+ O\bigg(\frac{|r_n(-x,y)^3|}{|\sigma_n(-x,y)|}\left|\frac{k'_n(-xy)}{k_n(-xy)}\right|^2\bigg)\\
    &=O(\tau_n r_n(x,y))+ O(\tau_n^3 r^3_n(x,y))\\
    &=O(\tau_n r_n(x,y)).
\end{align*}
This completes the proof of \eqref{e.delta-mix}.

Now, note that $\Lambda(\delta)=1+O(\delta^2)$ for $\delta$ near $0$, so it follows that
    \[
    \Lambda(\delta_n(-x,y))=1+O(\tau_n^2)\alpha^{2\tau+1}.
    \]
But then 
\begin{align*}
    \rho^{(2)}_n(-x,y)=\frac{1}{\pi^2}\Lambda(\delta_n(-x,y))\frac{\sigma_n(-x,y)}{\sqrt{1-r_n^2(-x,y)}}
    =\rho^{(1)}_n(-x)\rho^{(1)}_n(y)\left(1+O(\tau_n^2)\alpha^{2\tau+1}\right),
\end{align*}
which yields \eqref{rho2-in-mix} when combined with \eqref{rho1-in}.
\end{proof}

\subsection{Correlation functions of real roots of the reciprocal polynomials}
The above analysis applies directly only to estimating the variance of the number of real roots within subsets of $(-1,1)$. To handle the intervals $(-\infty, -1) \cup (1, \infty)$, we consider the reciprocal polynomial
\[Q_n(x):=\frac{x^n}{c_n}\widetilde{P}_n(1/x),\] 
which converts the roots of $\widetilde{P}_n$ in $(-\infty,-1)\cup (1,\infty)$ to the roots of $Q_n$ in $(-1,1)$. Note that
    \[
    Q_n(x)=\sum_{j=0}^n\widetilde{\xi}_{n-j}\frac{c_{n-j}}{c_n} x^j
    \]
is also a Gaussian polynomial. Let $k_{Q_n}(x)$ denote the corresponding variance function; then
    \[
    k_{Q_n}(x)=\sum_{j=0}^n \frac{c_{n-j}^2}{c_n^2}x^j=\frac{x^nk_n(1/x)}{c_n^2}.
    \]
Recall that $I_n=[1-a_n,1-b_n]$, where $a_n=d_n^{-1}=\exp(-\log^{d/4}n)$ and $b_n=d_n/n$. For $x\in I_n$, we will show that $k_{Q_n}(x)$ converges to $\frac{1}{1-x}$ as $n\to\infty$, suggesting that $Q_n$ asymptotically resembles a classical Kac polynomial (this heuristics is well known; see, e.g., \cite{DNV}).

\begin{lemma}\label{l.cor-asymp-out} Let $\rho_{Q_n}^{(1)}$ and $\rho_{Q_n}^{(2)}$ denote the one-point and two-point correlation functions of the real roots of $Q_n$, respectively. Define
\begin{equation} \label{e.en}
    e_n :=\max_{0\le j\le n\sqrt{a_n}}\left |\frac{|c_{n-j}|}{|c_n|}-1 \right|+ \frac 1{\log\log n}.
\end{equation}
Fix $S_n\in \{-I_n, I_n\}$. Then, uniformly for $x\in S_n$, 
    \[
    \rho_{Q_n}^{(1)}(x)=\frac{1}{\pi}\frac{1}{1-x^2}(1+O(e_n)).
    \]
Uniformly for $(x,y)\in S_n\times S_n$,
    \[
    \rho^{(2)}_{Q_n}(x,y)=\frac{1}{\pi^2}\frac{1+f_0(\alpha)}{(1-x^2)(1-y^2)}\left(1+O(\sqrt[16]{e_n})\right),
    \]
where $f_0$ is given by 
\begin{align*}
    f_0(u)=(\sqrt{1-u}+\sqrt{u}\arcsin \sqrt u)\sqrt{1-u}-1,
\end{align*}
as in \eqref{ftau} with $\tau=0$.
Furthermore, there is a positive constant $\alpha_1>0$ (independent of $n$, though possibly depending on the constants and convergence rates in conditions \ref{A1} and \ref{A2}) such that for $\alpha \le \alpha_1$, 
    \[
    \rho_{Q_n}^{(2)}(x,y)-\rho_{Q_n}^{(1)}(x)\rho_{Q_n}^{(1)}(y) = \frac{O(\alpha)}{(1-x^2)(1-y^2)}.
    \]
Finally, uniformly for $(x,y)\in S_n\times S_n$,
    \[
    \rho^{(2)}_{Q_n}(-x,y)-\rho^{(1)}_{Q_n}(-x)\rho^{(1)}_{Q_n}(y)=\frac{1}{\pi^2}\frac{\alpha}{(1-x^2)(1-y^2)}o(1).
    \]
\end{lemma}

\begin{proof}
Write 
    \[
    k_{Q_n}(x)=\sum_{0\le j\le n\sqrt{a_n}} \frac{c_{n-j}^2}{c_n^2}x^j+\sum_{n\sqrt{a_n}<j\le n} \frac{c_{n-j}^2}{c_n^2}x^j.
    \]
By condition \ref{A2}, 
    \[
    \frac{c_{n-j}^2}{c_n^2}=\begin{cases}
    1+o_{n}(1)&\text{if}\quad 0\le j\le n\sqrt{a_n},\\
    O\left(n^{|2\tau|}\right)&\text{if}\quad n\sqrt{a_n}< j\le n,
    \end{cases}
    \]
where $o_{n}(1)\to 0$ as $n\to \infty$. For $x\in I_n^2$, it holds that 
    \[
    n^{|2\tau|+1}x^{n\sqrt{a_n}}\le n^{|2\tau|+1}(1-b_n)^{2n\sqrt{a_n}}=O(e^{-\sqrt{d_n}})=O(a_n).
    \]
Consequently, by putting
    \[
    e_n^0:=\max_{0\le j\le n\sqrt{a_n}}\bigg |\frac{|c_{n-j}|}{|c_n|}-1 \bigg|+ a_n,
    \] 
we see that uniformly for $x\in I_n^2$,
    \[
    k_{Q_n}(x)=\frac{1}{1-x}(1+O(e^0_n))
    \]
and 
    \[
    k_{Q_n}(-x)=O(1)+O(e^0_n)k_{Q_n}(x) = O(e^0_n)k_{Q_n}(x).
    \]
    
For the derivatives of $k_{Q_n}$, we similarly compare them with those of the corresponding variance function for the classical Kac polynomials, leading to analogues of Lemma~\ref{l.kn} (more precisely, Corollary~\ref{c.kn-der}) for $k_{Q_n}$. Note that to account for the derivatives of $k_{Q_n}$, one has to add an $O(1/j)$ term to $e^0_n$ (where $j\ge \log\log n$ as in the proof of Lemma~\ref{l.kn}). Thus, with $e_n$ defined by \eqref{e.en} (which dominates $e^0_n$), it holds uniformly for $x\in I_n^2$ that
\begin{equation} \label{dkq}
    k_{Q_n}^{(j)}(x) = \frac{j!}{(1-x)^{1+j}}(1+O(e_n))\quad \text{and}\quad k_{Q_n}^{(j)}(-x)=O(e_n)k_{Q_n}^{(j)}(x)
\end{equation}
for $j=0,1,...,4$. In other words, $e_n$ plays the same role as $\tau_n$ in the prior treatment of $\widetilde P_n$ within $(-1,1)$.

Let $r_{Q_n}(x,y)$ denote the normalized correlator of $Q_n$; that is,
    \[
    r_{Q_n}(x,y)=\frac{\mathbb E[Q_n(x)Q_n(y)]}{\sqrt{\Var[Q_n(x)]\Var[Q_n(y)]}}=\frac{k_{Q_n}(xy)}{\sqrt{k_{Q_n}(x^2)k_{Q_n}(y^2)}}.
    \]
It follows from \eqref{dkq} that, uniformly for $(x,y)\in S_n\times S_n$, 
    \[
    r_{Q_n}(x,y)=\left[\frac{(1-x^2)(1-y^2)}{(1-xy)^2}\right]^{1/2}(1+O(e_n))=\alpha^{1/2}(x,y)(1+O(e_n)),
    \]
and 
    \[
    r_{Q_n}(-x,y)=O(e_n)r_{Q_n}(x,y).
    \]
The rest of the proof is entirely similar to the previous treatment for $\widetilde{P}_n$, with $\tau=0$ and $C_1=1$.
\end{proof}

\subsection{Asymptotic independence of real roots across distinct core regions}
Our next task is to establish the correlation between the roots of $\widetilde{P}_n$ inside and outside the interval $(-1,1)$.

\begin{lemma} \label{l.cor-in-out}
Let $S_n\in \{-I_n,I_n\}$  and $T_n\in \{-I_n^{-1}, I_n^{-1}\}$. Uniformly for $(x,y)\in S_n\times T_n$, 
\begin{equation} \label{r-mix}
    r_n(x,y)=o(e^{-d_n/2})
\end{equation}
and
\begin{equation} \label{rho2-inout-mix}
    \rho^{(2)}_n(x,y)-\rho^{(1)}_n(x)\rho^{(1)}_n(y)=\rho^{(1)}_n(x)\rho^{(1)}_n(y)o(e^{-d_n/2}),
\end{equation}
where $\rho_n^{(1)}(x)$ satisfies \eqref{rho1-in} and 
\begin{equation} \label{rho1-out}
    \rho_n^{(1)}(y)=\frac{1}{\pi}\frac{1}{y^2-1}(1+O(e_n)),\quad y\in T_n.
\end{equation}
\end{lemma}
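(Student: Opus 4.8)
The plan is to prove the three assertions \eqref{r-mix}, \eqref{rho1-out}, \eqref{rho2-inout-mix} in that order, with the correlator bound \eqref{r-mix} doing the heavy lifting; the guiding heuristic is that a zero of $\widetilde P_n$ near $x\in S_n\subset(-1,1)$ is governed by the coefficients $\widetilde\xi_j$ with $j=O\big(1/(1-|x|)\big)=O(n/d_n)$, while a zero near $y\in T_n$ is governed, through the reciprocal polynomial $Q_n$, by $\widetilde\xi_{n-j}$ with $j=O(n/d_n)$, and these index ranges are disjoint for large $n$. For \eqref{r-mix}, recall $r_n(x,y)=k_n(xy)/\sqrt{k_n(x^2)k_n(y^2)}$ with $k_n(z)=\sum_{j=0}^n c_j^2 z^j$. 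Since $|y|\ge(1-b_n)^{-1}$, every summand of $k_n(y^2)$ is positive, so $k_n(y^2)\ge c_n^2|y|^{2n}\gg n^{2\tau}(1-b_n)^{-2n}\gg n^{2\tau}e^{2d_n}$; on the other hand $|x|\le 1-b_n<1$ gives $k_n(x^2)\gg 1$ by Lemma~\ref{l.kn}, while the trivial bound $|k_n^{(i)}(w)|\le k_n^{(i)}(|w|)\le\max(1,|w|)^n\,k_n^{(i)}(1)=O\big(n^{2\tau+i+1}\max(1,|w|)^n\big)$ (valid for all $i\ge 0$, using $\tau>-1/2$) gives $|k_n(xy)|=O\big(n^{2\tau+1}\max(1,|xy|)^n\big)$. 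Writing $|x|=1-s$, $1/|y|=1-t$ with $s,t\in[b_n,a_n]$ we have $\max(1,|xy|)/|y|=\max(1/|y|,|x|)=1-\min(s,t)$, hence $\max(1,|xy|)^n/|y|^n\le e^{-n\min(s,t)}\le e^{-d_n}$; combining, $r_n(x,y)^2\ll n^{2(2\tau+1)}/(n^{2\tau}e^{2d_n})=n^{2\tau+2}e^{-2d_n}=o(e^{-d_n})$, since $d_n=e^{\log^{d/4}n}$ dominates every power of $\log n$. This proves \eqref{r-mix}, with room to spare.

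For \eqref{rho1-out}, since $Q_n(1/y)=y^{-n}c_n^{-1}\widetilde P_n(y)$ and $y\ne 0$, the map $z\mapsto 1/z$ puts the zeros of $\widetilde P_n$ in $T_n$ in bijection with those of $Q_n$ in $1/T_n\subset\pm I_n$, so the change of variables $z=1/y$ (with $|dz/dy|=1/y^2$) gives $\rho_n^{(1)}(y)=\rho_{Q_n}^{(1)}(1/y)/y^2$. By the first assertion of Lemma~\ref{l.cor-asymp-out}, $\rho_{Q_n}^{(1)}(1/y)=\frac1\pi\frac{1}{1-1/y^2}(1+O(e_n))=\frac1\pi\frac{y^2}{y^2-1}(1+O(e_n))$, and dividing by $y^2$ yields \eqref{rho1-out}; the estimate \eqref{rho1-in} for $\rho_n^{(1)}(x)$, $x\in S_n$, is already available.

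For \eqref{rho2-inout-mix}, I would feed \eqref{r-mix} and the analogous smallness of the partial derivatives of $r_n$ into the Kac--Rice expressions for $\rho^{(2)}_n,\sigma_n,\delta_n$ from Section~\ref{sec3}, using the $r_n$-form rather than the $\ell_n$-rewrite so as to avoid the artificial singularity at zeros of $k_n(xy)$ (which can occur when $S_n,T_n$ have opposite signs), exactly as in the proof of Lemma~\ref{l.cor-asymp-mix}. Writing $\frac{\partial r_n}{\partial x}(x,y)=\frac{y\,k_n'(xy)}{\sqrt{k_n(x^2)k_n(y^2)}}-r_n(x,y)\,x\frac{k_n'(x^2)}{k_n(x^2)}$ and using $|k_n'(xy)|=O(n^{2\tau+2}\max(1,|xy|)^n)$, $\sqrt{k_n(x^2)k_n(y^2)}\gg n^\tau|y|^n$, $k_n'(x^2)/k_n(x^2)=O(n)$ together with \eqref{r-mix}, one obtains $\frac{\partial r_n}{\partial x}(x,y)=O(n^{\tau+2}e^{-d_n})+o(n\,e^{-d_n/2})=o(e^{-d_n/4})$, and likewise $\frac{\partial r_n}{\partial y}(x,y)=o(e^{-d_n/4})$ and $\frac{\partial^2 r_n}{\partial x\partial y}(x,y)=o(e^{-d_n/4})$; meanwhile $\frac{\partial^2 r_n}{\partial x\partial y}(x,x)=\pi^2(\rho_n^{(1)}(x))^2\asymp(1-x^2)^{-2}$ and $\frac{\partial^2 r_n}{\partial x\partial y}(y,y)=\pi^2(\rho_n^{(1)}(y))^2\asymp(y^2-1)^{-2}$ (the latter via \eqref{rho1-out}). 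Plugging these into the formula for $\sigma_n$ gives $\sigma_n(x,y)=\pi^2\rho_n^{(1)}(x)\rho_n^{(1)}(y)(1+o(e^{-d_n/2}))$; into the formula for $\delta_n$ gives $\delta_n(x,y)=o(e^{-d_n/4})$, so $\sqrt{1-\delta_n^2}+\delta_n\arcsin\delta_n=1+O(\delta_n^2)=1+o(e^{-d_n/2})$; and $\sqrt{1-r_n^2}=1+o(e^{-d_n})$. Substituting into \eqref{rho2} gives $\rho^{(2)}_n(x,y)=\rho_n^{(1)}(x)\rho_n^{(1)}(y)(1+o(e^{-d_n/2}))$, which is \eqref{rho2-inout-mix}.

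The main obstacle I anticipate is the bookkeeping in the last step: one must cleanly separate the genuinely large factors — $k_n(y^2)$ and its derivatives, and $\rho_n^{(1)}(y)\asymp(y^2-1)^{-1}$, which can be as large as a fixed power of $n$ — from the small ones, and check at each stage that the polynomial-in-$n$ losses are absorbed by the gain $e^{-d_n}$ coming from $|y|>1$, which they are since $\log n=o(d_n)$. No individual estimate is deep; the care lies entirely in making the normalizations and the partial cancellations line up.
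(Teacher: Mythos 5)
Your proposal is correct and follows essentially the same route as the paper: bound $r_n$ and its partial derivatives via the crude estimate $|k_n^{(m)}(w)|\ll n^{O(1)}\max(1,|w|)^n$, the lower bound $k_n(y^2)\gg c_n^2|y|^{2n}$, and the geometric observation $\max(1,|xy|)/|y|\le 1-b_n$, then feed these into the Kac--Rice formula; \eqref{rho1-out} comes from $\rho_n^{(1)}(y)=y^{-2}\rho_{Q_n}^{(1)}(1/y)$ together with Lemma~\ref{l.cor-asymp-out}, exactly as in the paper. Working directly with $\partial r_n/\partial x$ instead of $\ell_n=\log|r_n|$ is only a presentational difference (the paper also effectively estimates $r_n\,\partial\ell_n/\partial x=\partial r_n/\partial x$ to sidestep the spurious singularity at zeros of $k_n(xy)$), and your exponent bookkeeping ($o(e^{-d_n/4})$ versus the paper's $o(e^{-d_n/2})$) is marginally lossier but just as sufficient.
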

\begin{proof} It is well known that $\rho^{(1)}_n(y)=\frac 1 {y^2} \rho^{(1)}_{Q_n}(\frac 1 y)$, which can be demonstrated through a change of variables (see, e.g., \cite{DNV}) or explicit computations.

Recall the definition of $e_n$ from \eqref{e.en}. Applying \eqref{dkq} and proceeding as in the proof of Lemma~\ref{lem.l}, we see that 
    \[
    \rho_n^{(1)}(y)=\frac{1}{\pi}\frac{1}{y^2-1}(1+O(e_n)),\quad y\in T_n,
    \]
which gives \eqref{rho1-out}. It also follows that 
    \[
    \frac{\partial^2\ell_n}{\partial x\partial y}(y,y)=\frac 1{\pi^2(y^2-1)^2}(1+O(e_n)).
    \]

Since $1\ll k_n(x^2)$ for $x\in S_n$, to prove \eqref{r-mix}, it suffices to show that for any bounded integer $m\ge 0$,
\begin{equation} \label{e.k-der-mix}
    |k^{(m)}_n(xy)|= o\left(e^{-2d_n/3}\sqrt{k_n(y^2)}\right).
\end{equation}
To see this, first note that from the polynomial growth of $c_j$, we obtain
    \[
    |k^{(m)}_n(xy)| =O(n^{2\tau+1+m}(|xy|^{n+1}+1)).
    \]
Using $k_n(y)=c_n^2y^{n}k_{Q_n}(1/y)$ and the asymptotic behavior of $k_{Q_n}$ given in \eqref{dkq}, we see that 
    \[
    k_n(y^2) =  c_n^2 \frac{y^{2n+2}}{y^2-1}(1+o(1)) \gg  n^{2\tau}|y|^{2(n+1)}, \quad y\in T_n.
    \]
For $(x,y)\in S_n\times T_n$ and any bounded constant $c$, we have 
    \[
    n^{c}|x|^{n}=O(n^{c}(1-b_n)^n)=o(e^{-2d_n/3}), \quad \text{and similarly}\quad \frac{n^{c}}{|y|^{n}} = o(e^{-2d_n/3}).
    \]
Consequently,
\begin{align*}
    \bigg|\frac{k^{(m)}_n(xy)}{\sqrt{k_n(y^2)}}\bigg|
    &=O\bigg(\frac{n^{O(1)}}{\sqrt{k_n(y^2)}} + \frac{n^{O(1)}|x|^{n+1}|y|^{n+1}}{\sqrt{k_n(y^2)}}\bigg)\\
    &\le O\bigg(\frac{n^{O(1)}}{|y|^{n+1}}\bigg) +O\left(n^{O(1)}|x|^{n+1}\right) \\
    &= o(e^{-2d_n/3}),
\end{align*}
completing the proof of \eqref{e.k-der-mix}, and  thus verifying \eqref{r-mix}.  

To prove \eqref{rho2-inout-mix}, we first employ the explicit computation of $\frac{\partial \ell_n}{\partial x}$ (see \eqref{e.lx}) to estimate
    \[
    \bigg|\frac{\left(r_n(x,y)\frac{\partial \ell_n}{\partial x}(x,y)\right)^2}{\frac{\partial^2\ell_n}{\partial x\partial y}(x,x)} \bigg| \le \bigg|\frac{2y^2}{\frac{\partial^2\ell_n}{\partial x\partial y}(x,x)}\frac{(k_n'(xy))^2}{k_n(x^2)k_n(y^2)} \bigg| + \bigg|2r_n^2(x,y)\frac{\left(\frac{k_n'(x^2)}{k_n(x^2)}\right)^2}{\frac{\partial^2\ell_n}{\partial x \partial y}(x,x)}\bigg|.
    \]
Using \eqref{e.k-der-mix}, we have
    \[
    \frac{|k_n'(xy)|^2}{k_n(x^2)k_n(y^2)}=o(e^{-d_n}),\quad (x,y)\in S_n\times T_n.
    \]
It follows from Lemma~\ref{lem.l}  and  Corollary~\ref{c.kn-der} that
    \[
    \frac{2y^2}{\frac{\partial^2\ell_n}{\partial x\partial y}(x,x)}=o(1) \quad \text{and}\quad \frac{\left(\frac{k_n'(x^2)}{k_n(x^2)}\right)^2}{\frac{\partial^2\ell_n}{\partial x\partial y}(x,x)}=O(1).
    \]
Together with \eqref{r-mix}, we obtain
\begin{equation} \label{rm-x}  
    1-\frac{(r_n(x,y)\frac{\partial \ell_n}{\partial x}(x,y))^2}{(1-r_n^2(x,y))\frac{\partial^2\ell_n}{\partial x\partial y}(x,x)}=1+o(e^{-d_n}),\quad (x,y)\in S_n\times T_n.
\end{equation}
Similarly, we also have
\begin{equation} \label{rm-y} 
    1-\frac{(r_n(x,y)\frac{\partial \ell_n}{\partial y}(x,y))^2}{(1-r_n^2(x,y))\frac{\partial^2\ell_n}{\partial x\partial y}(y,y)}=1+o(e^{-d_n}),\quad (x,y)\in S_n\times T_n.
\end{equation}
Substituting \eqref{rm-x} and \eqref{rm-y} into \eqref{sigma} yields
    \[
    \sigma_n(x,y)=\pi^2 \rho_n^{(1)}(x)\rho_n^{(1)}(y)(1+o(e^{-d_n})), \quad (x,y)\in S_n\times T_n.
    \]

In the same manner, we can show that 
    \[
    \delta_n(x,y)=o(e^{-d_n/2}), \quad (x,y)\in S_n\times T_n.
    \]
To prove this, we proceed similarly to the proof of \eqref{e.delta-mix} in Lemma~\ref{l.cor-asymp-mix}. Recall from \eqref{e.lxy-expand} that
    \[
    \frac{\partial^2 \ell_n}{\partial x\partial y}(x,y) =\frac{k_n'(xy)}{k_n(xy)}+xy\frac{k''_n(xy)}{k_n(xy)}- xy\left(\frac{k'_n(xy)}{k_n(xy)}\right)^2,
    \]
therefore,
\begin{align*}
\begin{split}
    r_n(x,y)\frac{\partial^2 \ell_n}{\partial x\partial y}(x,y) 
    &=  \frac {O\big( |k_n'(xy)|+|k''_n(xy)|)  \big)}{\sqrt{k_n(x^2)k_n(y^2)}} - r_n(x,y)xy\left(\frac{k'_n(xy)}{k_n(xy)}\right)^2\\
    &= o(e^{-d_n/2}) - r_n(x,y)xy\left(\frac{k'_n(xy)}{k_n(xy)}\right)^2.
\end{split}
\end{align*}
On the other hand, using \eqref{e.lx} gives
    \[
    r_n(x,y)\frac{\partial\ell_n}{\partial x}(x,y)\frac{\partial\ell_n}{\partial y}(x,y)= r_n(x,y)\bigg(y\frac{k_n'(xy)}{k_n(xy)}-x\frac{k_n'(x^2)}{k_n(x^2)}\bigg)\bigg(x\frac{k_n'(xy)}{k_n(xy)}-y\frac{k_n'(y^2)}{k_n(y^2)}\bigg).
    \]
As we will see, the main term on the right-hand side is $r_n(x,y)xy\big(\frac{k'_n(xy)}{k_n(xy)}\big)^2$. In the estimate below, we will use the crude estimate $|k'_n(t^2)/k_n(t^2)|\le n$ for all $t\in \mathbb R$. Combining with \eqref{e.k-der-mix}, it follows that
\begin{align*}
    r_n(x,y)\frac{\partial\ell_n}{\partial x}(x,y)\frac{\partial\ell_n}{\partial y}(x,y) &=r_n(x,y)xy\left(\frac{k'_n(xy)}{k_n(xy)}\right)^2+ o(e^{-d_n/2}).
\end{align*}
Since $r_n(x,y) = o(e^{-d_n/2})$ and $\sigma_n(x,y)\gg 1$ (as proved above), using \eqref{delta} we obtain
\begin{align*}
    \delta_n(x,y)&=\frac{1}{\sigma_n(x,y)}\bigg[r_n(x,y)xy\bigg(\frac{k'_n(xy)}{k_n(xy)}\bigg)^2\bigg(\frac 1{1-r_n(x,y)^2}-1\bigg)+o(e^{-d_n/2})\bigg]\\
    &=\frac{1}{\sigma_n(x,y)}\bigg[ \left(\frac{k'_n(xy)}{k_n(xy)}\right)^2O(|r_n(x,y)|^3)+o(e^{-d_n/2})\bigg]\\
    &=o(e^{-d_n/2}),
\end{align*}
which proves the desired claim $\delta_n(x,y)=o(e^{-d_n/2})$. Together with \eqref{rho2}, we deduce \eqref{rho2-inout-mix} as desired.
\end{proof}

\section{Proof of Theorem \ref{t.maingauss}} \label{s.mainthmproof}
From Lemma \ref{l.cor-in-out}, it follows that the numbers of real roots of $\widetilde{P}_n$ in $S_n\in \{-I_n,I_n\}$ and in $T_n\in \{-I_n^{-1},I_n^{-1}\}$ are asymptotically independent. Indeed, on account of \eqref{rho2-inout-mix} and \eqref{rho1-in}, we have 
\begin{align*}
    \Cov[\widetilde{N}_n (S_n),\widetilde{N}_n (T_n)]&=\int_{S_n}dx\int_{T_n}\left(\rho^{(2)}_n(x,y)-\rho^{(1)}_n(x)\rho^{(1)}_n(y)\right)dy\\
    &=o(e^{-d_n/2})\int_{S_n}\frac{1}{1-x^2}dx\int_{T_n}\frac{1}{y^2-1}dy\\
    &=o(e^{-d_n/2})O(\log^2n)=o(1).
\end{align*}
This gives
    \[
    \Var[\widetilde{N}_n(\mathcal I_n)]=\Var[\widetilde{N}_n(-I_n\cup I_n)]+\Var[\widetilde{N}_n(-I_n^{-1}\cup I_n^{-1})]+o(1).
    \]
Therefore, the proof of Theorem \ref{t.maingauss} naturally splits into two parts, as addressed in Lemma~\ref{l.var-in} and Lemma~\ref{l.var-out}. Before stating and proving these lemmas, we first collect some fundamental properties of $f_\tau$ (as defined in \eqref{ftau}), which will be useful throughout the proof.

\begin{lemma}\label{l.f-fact}
For $\tau>-1/2$, it holds that $\sup_{u\in [0,1]}|f_\tau(u)|<\infty$ and
\begin{align*}
    f_{\tau}(u)=\begin{cases} 
    O(u^{2\tau+1})&\text{as}\quad u\to 0^+,\\
    -1+O(\sqrt{1-u})&\text{as}\quad u\to 1^-.
    \end{cases}
\end{align*}
Furthermore, given any $\varepsilon'\in (0,1/2)$, $f_\tau$ is real analytic on $(\varepsilon',1-\varepsilon')$. In particular, the equation $f_\tau(u)=0$ has at most finitely many real roots in $(0,1)$, each of which has a finite vanishing order. 
\end{lemma}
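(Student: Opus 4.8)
The plan is to analyze the three factors of $f_\tau$ separately: $\Sigma_\tau(u)$, the bracketed expression $\sqrt{1-\Delta_\tau^2(u)}+\Delta_\tau(u)\arcsin\Delta_\tau(u)$, and the term $\Delta_\tau(u)$ itself. First I would record that $\Lambda(\delta):=\sqrt{1-\delta^2}+\delta\arcsin\delta$ is a smooth, bounded, even function on $[-1,1]$ with $\Lambda(0)=1$, $\Lambda(\pm1)=\pi/2$, and $\Lambda(\delta)=1+O(\delta^2)$ near $0$; thus controlling $f_\tau$ reduces to controlling $\Delta_\tau(u)$ and $\Sigma_\tau(u)$. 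The key elementary fact, already used implicitly in the proof of Lemma~\ref{l.cor-asymp} via the function $g(u)=1-u^{2\tau+1}-(2\tau+1)(1-u)u^{2\tau+1}$, is that the denominator of $\Delta_\tau$ and of $\Sigma_\tau$ equals $g(u)$, which is non-increasing on $[0,1]$ with $g(1)=0$, $g'(1)=0$, $g''(1)=(2\tau+1)(2\tau+2)\ne 0$; hence $g(u)>0$ on $[0,1)$ and $g(u)=\tfrac12(2\tau+1)(2\tau+2)(1-u)^2(1+o(1))$ as $u\to1^-$. Since $1-u^{2\tau+1}$ behaves like $(2\tau+1)(1-u)$ near $u=1$, the ratio $\Delta_\tau(u)$ stays bounded (its numerator $u^{\tau+1/2}\big(u(1-u^{2\tau+1})-(2\tau+1)(1-u)\big)$ vanishes to order $2$ at $u=1$ as well, by the same Taylor expansion), and a direct Taylor computation gives $\Delta_\tau(u)\to 0$ as $u\to1^-$ (the leading terms cancel), while $\Sigma_\tau(u)\to 0$ like $(1-u)^{1/2}$ — more precisely $\Sigma_\tau(u)=\tfrac12(2\tau+1)(2\tau+2)(1-u)^2\big/\big((2\tau+1)(1-u)\big)^{3/2}\cdot(1+o(1))=O(\sqrt{1-u})$. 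Combining, $f_\tau(u)=\Lambda(\Delta_\tau(u))\Sigma_\tau(u)-1\to -1$ with the correction of size $O(\Sigma_\tau(u))=O(\sqrt{1-u})$, giving the claimed behavior as $u\to1^-$.

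Next, for $u\to0^+$: here $u^{2\tau+1}\to 0$ (using $\tau>-1/2$, so $2\tau+1>0$), the denominator $g(u)\to 1$, and $\Sigma_\tau(u)=\big(1-u^{2\tau+1}-(2\tau+1)(1-u)u^{2\tau+1}\big)/(1-u^{2\tau+1})^{3/2}=1+O(u^{2\tau+1})$, while $\Delta_\tau(u)=u^{\tau+1/2}\cdot O(1)=O(u^{\tau+1/2})$, so $\Lambda(\Delta_\tau(u))=1+O(u^{2\tau+1})$. Therefore $f_\tau(u)=(1+O(u^{2\tau+1}))(1+O(u^{2\tau+1}))-1=O(u^{2\tau+1})$, as claimed. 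Boundedness of $f_\tau$ on all of $[0,1]$ then follows because on any compact subinterval $[\varepsilon',1-\varepsilon']$ the denominator $g(u)$ is bounded below by a positive constant and the numerators are continuous, so $\Delta_\tau$, $\Sigma_\tau$ and hence $f_\tau$ are continuous (in fact one must check $|\Delta_\tau(u)|\le 1$ so that $\arcsin\Delta_\tau(u)$ is defined on all of $[0,1]$; this is exactly the content of the inequality $g(u)\ge 0$ combined with the non-negativity of $1-\alpha^{2\tau+1}-(2\tau+1)(1-\alpha)$ type bounds, i.e. one shows $\big(u(1-u^{2\tau+1})-(2\tau+1)(1-u)\big)^2u^{2\tau+1}\le g(u)^2$, which after expansion is an elementary polynomial inequality in $u$ on $[0,1]$).

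For the analyticity assertion: on the open interval $(\varepsilon',1-\varepsilon')$ all of $u\mapsto u^{2\tau+1}$, $u\mapsto u^{\tau+1/2}$ are real analytic (positive base), $g(u)>0$ so $1/g(u)$ and $g(u)^{3/2}$, $(1-u^{2\tau+1})^{3/2}$ are real analytic, and since $|\Delta_\tau(u)|<1$ on the open interval (strict, because equality $|\Delta_\tau|=1$ would force $g=0$) the composition $\arcsin\circ\,\Delta_\tau$ and $\sqrt{1-\Delta_\tau^2}$ are real analytic; hence $f_\tau$ is real analytic there. A real-analytic function on an interval that is not identically zero has only isolated zeros, each of finite order; since $f_\tau\equiv 0$ would contradict $f_\tau(u)\to-1$ at $u=1$, on each $(\varepsilon',1-\varepsilon')$ there are finitely many zeros, and letting $\varepsilon'\to0$ together with the boundary asymptotics ($f_\tau<0$ near $1$, $f_\tau=O(u^{2\tau+1})$ near $0$ but — this needs a word — one must rule out an accumulation of zeros at $0$; since $f_\tau(u)/u^{2\tau+1}$ extends to a real-analytic function near $0$ that is not identically zero, its zeros do not accumulate at $0$ either) gives at most finitely many zeros in $(0,1)$ with finite vanishing order each. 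I expect the main obstacle to be the bookkeeping in the Taylor expansion at $u=1$: verifying that the apparent leading terms in the numerators of $\Delta_\tau$ and in $\Sigma_\tau$ genuinely cancel to the stated order requires carefully expanding $1-u^{2\tau+1}=(2\tau+1)(1-u)-\tfrac12(2\tau+1)(2\tau)(1-u)^2+\cdots$ and tracking the first non-vanishing coefficient; everything else is routine continuity and analyticity bookkeeping.
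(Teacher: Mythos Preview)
Your overall strategy matches the paper's, but there are three concrete problems.

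\textbf{The limit of $\Delta_\tau$ at $u=1$.} You assert that ``a direct Taylor computation gives $\Delta_\tau(u)\to 0$ as $u\to1^-$ (the leading terms cancel)''. This is false. Writing $s=1-u$, the numerator of $\Delta_\tau$ expands as $-(2\tau+1)(\tau+1)s^2+O(s^3)$ while the denominator $g(u)$ expands as $(2\tau+1)(\tau+1)s^2+O(s^3)$, so the ratio tends to $-1$, not $0$; hence $\Delta_\tau(u)\to -1$. Fortunately this does not break your conclusion $f_\tau(u)=-1+O(\sqrt{1-u})$, since $\Lambda$ is bounded on $[-1,1]$ regardless, but the stated reasoning is wrong.

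\textbf{The inequality $|\Delta_\tau(u)|<1$ on $(0,1)$.} This is the heart of the lemma (it is needed even for $f_\tau$ to be defined, and strict inequality is needed for analyticity of $\arcsin\circ\Delta_\tau$). You dismiss it as ``an elementary polynomial inequality in $u$'', but for generic $\tau$ the exponents $2\tau+1$ and $\tau+\tfrac12$ are non-integers, so this is not a polynomial inequality at all; and your strictness argument (``equality $|\Delta_\tau|=1$ would force $g=0$'') is simply incorrect: $|\Delta_\tau|=1$ means $|u^{\tau+1/2}\,\mathrm{Num}|=g(u)$, which does not force $g=0$. In the paper this step occupies most of the proof: one shows $\mathrm{Num}<0$ and then proves $h(u):=u^{\tau+1/2}\,\mathrm{Num}+g(u)>0$ on $(0,1)$ by a case split $\tau\ge 0$ versus $-\tfrac12<\tau<0$, each handled by a monotonicity/derivative argument. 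You need to supply an argument of comparable strength here.

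\textbf{Zeros near $u=0$.} Your fix for possible accumulation of zeros at $0$ is to claim ``$f_\tau(u)/u^{2\tau+1}$ extends to a real-analytic function near $0$''. For non-integer $2\tau+1$ this is false: already $\Sigma_\tau$ and $\Lambda(\Delta_\tau)^2$ contain terms like $u^{4\tau+2}$, so $f_\tau(u)/u^{2\tau+1}$ picks up a $u^{2\tau+1}$ contribution, which is not real-analytic at $0$. The correct route (which the paper takes) is to sharpen your $O(u^{2\tau+1})$ to the precise leading term $f_\tau(u)=2\tau^2 u^{2\tau+1}(1+o(1))$ for $\tau\ne 0$ (and $f_0(u)=\tfrac16 u^2(1+o(1))$), which gives a definite sign near $0$ and rules out accumulation directly.
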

\begin{proof}
The estimates near $0$ and $1$ for $f_\tau$ follow directly from \eqref{ftau} and Taylor expansions. Specifically, for $u$ near $0$, one has $f_\tau(u)=2\tau^2 u^{2\tau+1}(1+o(1))$ if $\tau\ne0$, and for $\tau=0$, $f_0(u)=(-\frac{1}{3}) u^2(1+o(1))$. These endpoint estimates demonstrate that the roots do not accumulate at $0$ or $1$ (see Figure \ref{fig5}).

\begin{figure}[htbp]
\centering
\includegraphics[scale=1]{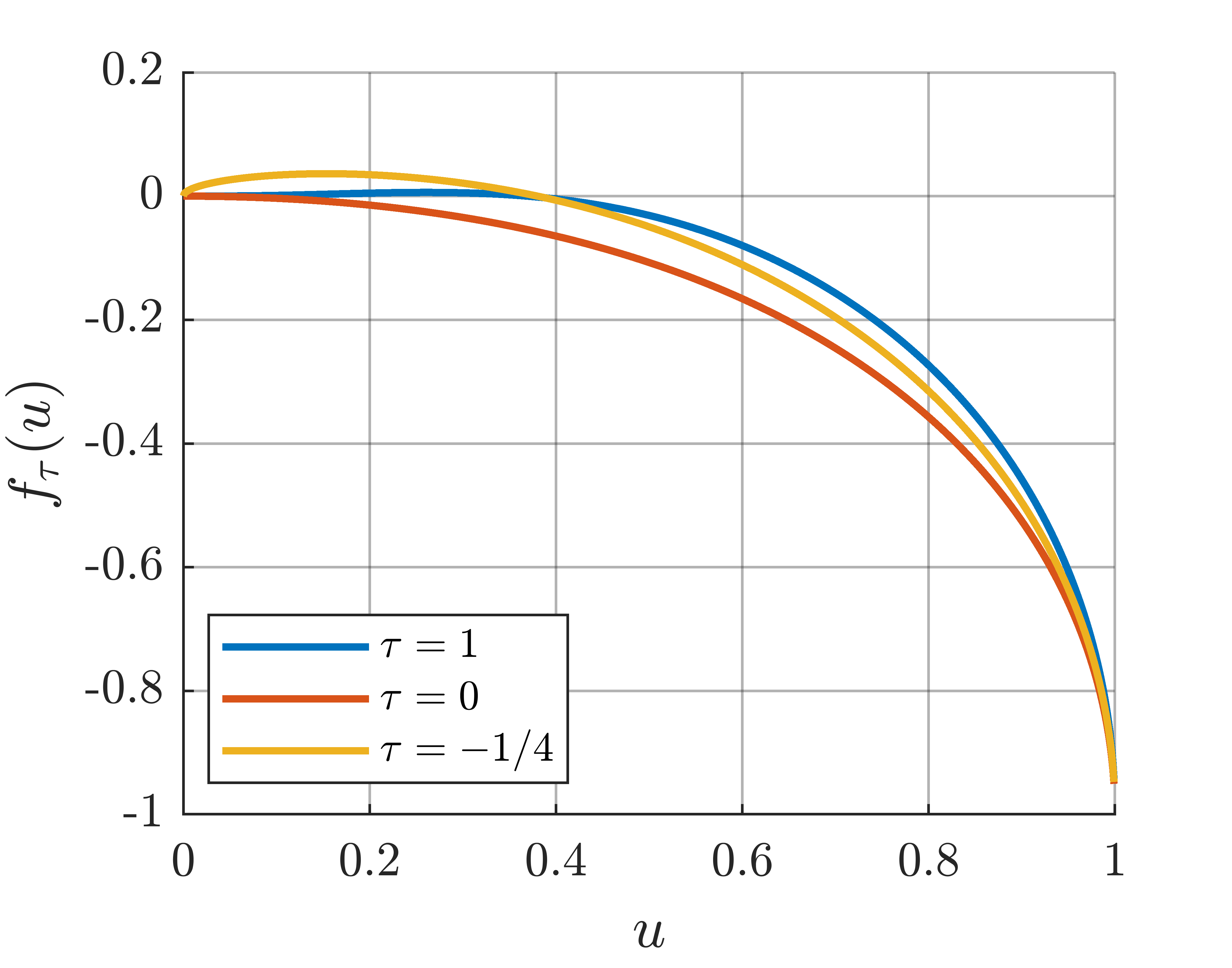}
\caption{Plots of $f_\tau$ on $[0,1]$ when $\tau= 1, 0, -1/4$.}
\label{fig5}
\end{figure}

Recall the definition of $\Delta_\tau(u)$ in \eqref{del}. Note that $0\ge \Delta_\tau(u)\ge -1$, and these inequalities are strict for $u\in (0,1)$ (see Figure~\ref{fig6}).

\begin{figure}[htbp]
\centering
\includegraphics[scale=1]{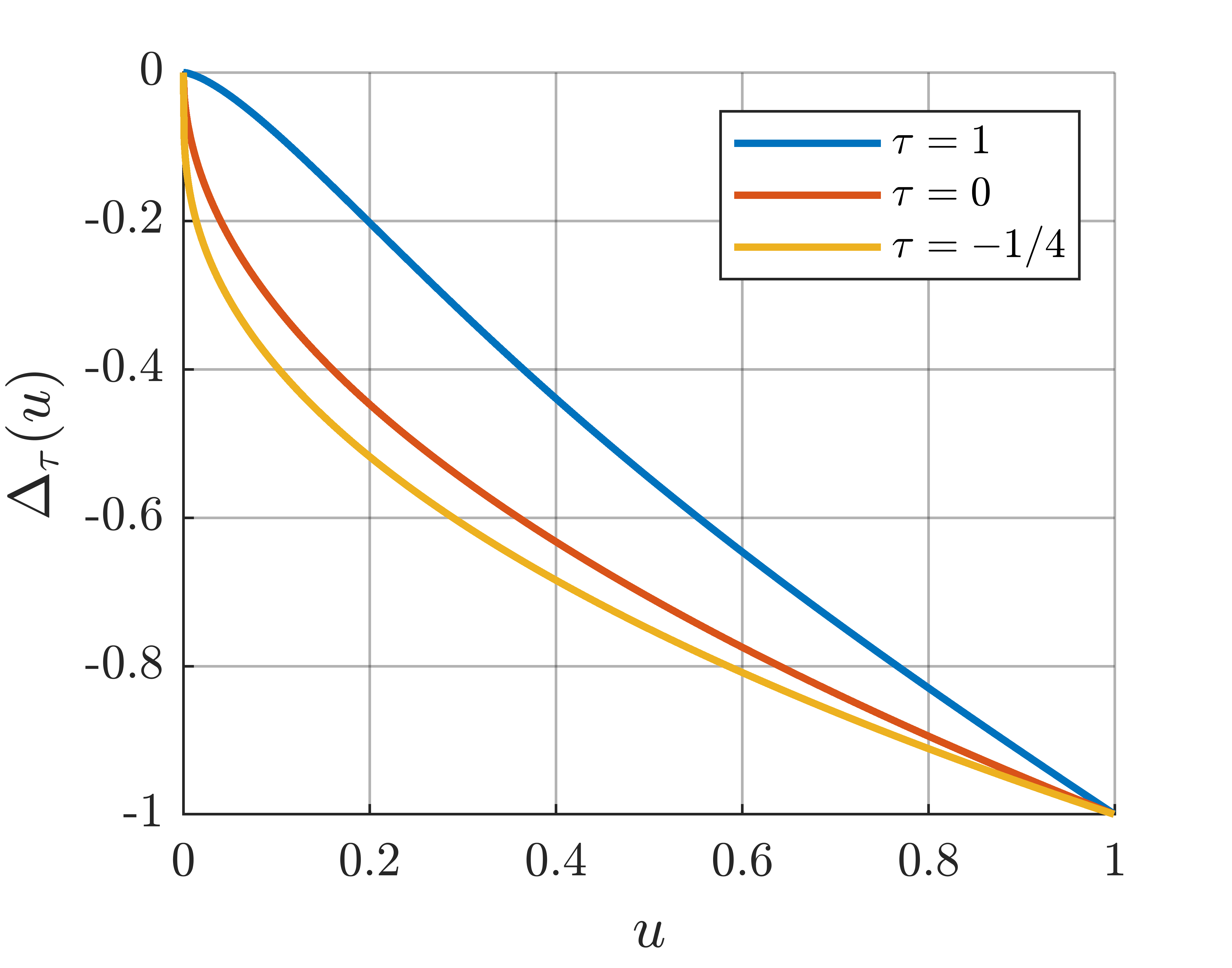}
\caption{Plots of $\Delta_\tau$ on $[0,1]$ when $\tau= 1, 0, -1/4$.}
\label{fig6}
\end{figure}
 
Writing $\Delta_\tau =u^{\tau+1/2}\frac{\rm Num}{\rm Denom}$, we show that ${\rm Num}<0$ while $u^{\tau+\frac 1 2}{\rm Num}+{\rm Denom}>0$ for $u\in (0,1)$. Through examination, we first see that ${\rm Num}$ is strictly increasing on $(0,1)$, so ${\rm Num}<{\rm Num}(1)=0$. Now,
    \[
    u^{\tau+\frac 1 2}\text{Num} + \text{Denom} = (1-u^{2\tau+1})(1+u^{\tau+\frac 32})- (2\tau+1)(1-u)(u^{\tau+\frac 1 2}+u^{2\tau+1})=:h(u).
    \]
If $\tau \ge 0$, it can be verified that $(1-u^{2\tau+1}) - (2\tau+1)(1-u)u^\tau$ is decreasing for $u\in (0,1)$. Hence, $(1-u^{2\tau+1}) \ge (2\tau+1)(1-u)u^\tau$. Consequently, 
\begin{align*}
    h(u) &\ge (2\tau+1)(1-u) (u^\tau+u^{2\tau+\frac 32} -u^{\tau+\frac 1 2}-u^{2\tau+1})\\
    &=(2\tau+1)u^\tau(1-u)(1-\sqrt{u})(1-u^{\tau+1})\\
    &>0.
\end{align*}
If $-\frac 1 2 <\tau<0$, we will show that $h$ is strictly decreasing on $(0,1)$. One has 
\begin{align*}
    h'(u)&= (2\tau+2)(\tau+\frac 3 2)u^{\tau+\frac 12}+(2\tau+2)(2\tau+1)u^{2\tau+1}-(3\tau+\frac  52)u^{3\tau+\frac 32}  \\
    &\quad -(2\tau+2)(2\tau+1)u^{2\tau}-(2\tau+1)(\tau+\frac 12)u^{\tau-\frac 12},
\end{align*}
and 
\begin{align*}
    \frac{d}{du}(u^{-2\tau}h'(u)) &=(2\tau+2)(\tau+\frac 3 2)(\frac 12-\tau)u^{-\tau-\frac 12}+(2\tau+2)(2\tau+1)\\
    &\quad -(3\tau+\frac  52)(\tau+\frac 32)u^{\tau+\frac 12}  + (2\tau+1)(\tau+\frac 12)(\tau+\frac 12)u^{-\tau-\frac 32}\\
    &> u^{\tau+\frac 12}\bigg[(2\tau+2)(\tau+\frac 3 2)(\frac 12-\tau)  +(2\tau+2)(2\tau+1)   \\
    &\quad - (3\tau+\frac  52)(\tau+\frac 32)+(2\tau+1)(\tau+\frac 12)(\tau+\frac 12)\bigg]\\
    &=0.
\end{align*}
Thus, $u^{-2\tau}h'(u)$ is strictly increasing on $(0,1)$, and so $u^{-2\tau}h'(u)<h'(1)=0$ for $u\in (0,1)$. Consequently, $h'<0$ and $h$ is strictly decreasing on $(0,1)$, which implies $h(u)>h(1)=0$. This completes the proof of the claimed estimates for $\Delta_\tau$.

By continuity, the above considerations imply $\max_{u\in [\varepsilon',1-\varepsilon']}|\Delta_\tau(u)|<1$. Consequently, using the principal branch of $\log$, it is clear from the definition that $f_\tau$ has analytic continuation to a neighborhood of $[\varepsilon',1-\varepsilon']$ in $\mathbb C$, thus the claimed properties regarding the real zeros of $f_\tau$ follow.
\end{proof}

\begin{lemma}\label{l.var-in}
It holds that 
\begin{equation} \label{var-in}
    \Var[\widetilde{N}_n(S_n)] = \left(\kappa_\tau+o(1)\right)\log n, \quad S_n\in \{-I_n,I_n\},
\end{equation}
and 
\begin{equation} \label{var-in-cup}
    \Var[\widetilde{N}_n(-I_n\cup I_n)] = \left(2\kappa_\tau+o(1)\right)\log n.
\end{equation}
\end{lemma}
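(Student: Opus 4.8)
The plan is to apply the Kac--Rice variance identity \eqref{var} with $I=S_n$, reducing \eqref{var-in} to two computations: the mean $\mathbf E[\widetilde N_n(S_n)]$ and the double integral $\iint_{S_n\times S_n}[\rho_n^{(2)}(x,y)-\rho_n^{(1)}(x)\rho_n^{(1)}(y)]\,dx\,dy$. By the symmetry $x\mapsto-x$, which preserves both $\alpha$ and $1-x^2$, one may take $S_n=I_n$. The mean is immediate from \eqref{rho1-in}: using $\frac1{1-x^2}=\frac12\bigl(\frac1{1-x}+\frac1{1+x}\bigr)$ and $\int_{I_n}\frac{dx}{1-x^2}=\frac12\log(a_n/b_n)+O(a_n)$ with $\log(a_n/b_n)=\log n-2\log^{d/4}n=\log n\,(1+o(1))$, one gets $\mathbf E[\widetilde N_n(I_n)]=\frac{\sqrt{2\tau+1}}{2\pi}\log n\,(1+o(1))$.

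For the double integral I would first combine \eqref{rho2-in}, \eqref{rho1-in}, and the boundedness of $1+f_\tau$ (Lemma~\ref{l.f-fact}) to write, uniformly on $I_n\times I_n$,
\[
\rho_n^{(2)}(x,y)-\rho_n^{(1)}(x)\rho_n^{(1)}(y)=\frac{2\tau+1}{\pi^2}\,\frac{f_\tau(\alpha(x,y))}{(1-x^2)(1-y^2)}+\frac{O(\sqrt[16]{\tau_n})}{(1-x^2)(1-y^2)}.
\]
Integrating this bound directly gives an error $O(\sqrt[16]{\tau_n}\log^2 n)$, which is too large, so the key maneuver is to split $I_n\times I_n$ at $\alpha=\alpha_2$ for a slowly vanishing threshold, e.g. $\alpha_2:=\tau_n^{1/32}\to0$. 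On $\{\alpha>\alpha_2\}$ the pairs $(x,y)$ are pseudo-hyperbolically close, and the change of variables below shows $\iint_{\{\alpha>\alpha_2\}}\frac{dx\,dy}{(1-x^2)(1-y^2)}=O(\log(1/\alpha_2)\log n)=O(\log(1/\tau_n)\log n)$, so the $O(\sqrt[16]{\tau_n})$ error contributes only $O(\sqrt[16]{\tau_n}\log(1/\tau_n)\log n)=o(\log n)$. On $\{\alpha\le\alpha_2\}$, where $\alpha_2<\alpha_0$ for large $n$, I discard \eqref{rho2-in} and use instead the sharper \eqref{e.alphasmall-in}, which bounds the truncated correlator by $O(\alpha^{2\tau+1})/[(1-x^2)(1-y^2)]$; since $f_\tau(\alpha)=O(\alpha^{2\tau+1})$ near $0$ as well, both the genuine integrand and the $f_\tau$-proxy integrate over this set to $O(\alpha_2^{2\tau+1}\log n)=o(\log n)$ (using $\int_{\{\alpha\le\alpha_2\}}(\sech^2v)^{2\tau+1}\,dv=O(\alpha_2^{2\tau+1})$). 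Hence the double integral equals $\frac{2\tau+1}{\pi^2}\iint_{I_n\times I_n}\frac{f_\tau(\alpha)}{(1-x^2)(1-y^2)}\,dx\,dy+o(\log n)$.

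To evaluate the remaining integral I would substitute $x=1-e^{-p}$, $y=1-e^{-q}$, so that $\frac{dx}{1-x^2}=\frac12(1+O(a_n))\,dp$ (and likewise in $y$), the parameters $p,q$ range over $[P_1,P_2]:=[\log^{d/4}n,\ \log n-\log^{d/4}n]$, and a direct computation gives $\alpha(x,y)=\sech^2\!\bigl(\tfrac{p-q}{2}\bigr)(1+O(a_n))$. Replacing $f_\tau(\alpha)$ by $f_\tau(\sech^2(\tfrac{p-q}{2}))$ costs only $O(a_n^{\min(1,2\tau+1)})\log^2n=o(\log n)$ by the Hölder continuity of $f_\tau$ (Lemma~\ref{l.f-fact}), so the integral becomes $\frac14\int_{P_1}^{P_2}\!\int_{P_1}^{P_2}f_\tau(\sech^2(\tfrac{p-q}{2}))\,dp\,dq+o(\log n)$. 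For $p$ in the bulk of $[P_1,P_2]$ (all but an $O(\sqrt{\log n})$-long boundary layer) the inner integral over $q$ equals $2\int_{\mathbb R}f_\tau(\sech^2 w)\,dw+o(1)=4\int_0^\infty f_\tau(\sech^2 v)\,dv+o(1)$, because $f_\tau(\sech^2 w)$ decays exponentially as $|w|\to\infty$; since $P_2-P_1=\log n\,(1+o(1))$ this gives $\bigl(\int_0^\infty f_\tau(\sech^2 v)\,dv\bigr)\log n\,(1+o(1))$. Adding the mean term, $\Var[\widetilde N_n(S_n)]=\bigl[\frac{2\tau+1}{\pi^2}\int_0^\infty f_\tau(\sech^2 v)\,dv+\frac{\sqrt{2\tau+1}}{2\pi}\bigr]\log n+o(\log n)=(\kappa_\tau+o(1))\log n$, proving \eqref{var-in}. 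Finally \eqref{var-in-cup} follows from $\Var[\widetilde N_n(-I_n\cup I_n)]=\Var[\widetilde N_n(-I_n)]+\Var[\widetilde N_n(I_n)]+2\Cov[\widetilde N_n(-I_n),\widetilde N_n(I_n)]$ together with \eqref{rho2-in-mix}, which gives $\Cov[\widetilde N_n(-I_n),\widetilde N_n(I_n)]=o(1)\iint_{I_n\times I_n}\frac{2\tau+1}{\pi^2}\frac{\alpha^{2\tau+1}}{(1-x^2)(1-y^2)}\,dx\,dy=o(\log n)$.

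The step I expect to be the real obstacle is the tiny-$\alpha$ regime: there the multiplicative error $O(\sqrt[16]{\tau_n})$ in \eqref{rho2-in} swamps the truncated correlator itself, which is only of size $\alpha^{2\tau+1}$ and hence exponentially small in the pseudo-hyperbolic separation, so one genuinely has to switch to the absolute estimate \eqref{e.alphasmall-in} and balance the two regimes through the $n$-dependent cutoff $\alpha_2(n)$. By contrast the remaining ingredients --- the change of variables, the $O(\sqrt{\log n})$ boundary layers near the endpoints of $I_n$, and the mere Hölder (rather than Lipschitz) regularity of $f_\tau$ at $0$ and $1$ --- are harmless because $a_n$ decays faster than any power of $\log n$.
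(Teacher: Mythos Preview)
Your proof is correct and follows the same overall architecture as the paper's---Kac--Rice identity, a split on $\alpha$, a hyperbolic-type change of variables---but the two arguments differ in how they absorb the multiplicative error $O(\sqrt[16]{\tau_n})$ from \eqref{rho2-in}. The paper keeps a \emph{fixed} threshold $\alpha_0$ and, on $\{\alpha\ge\alpha_0\}$, further subdivides according to whether $|f_\tau(\alpha)|$ exceeds $\sqrt[32]{\tau_n}$; the small-$|f_\tau|$ subset is shown to have negligible $(t,s)$-measure via the real analyticity of $f_\tau$ (Lemma~\ref{l.f-fact}), while on the large-$|f_\tau|$ subset the error can be rewritten as $O(|f_\tau|\sqrt[32]{\tau_n})$ and then integrated. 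Your route---an $n$-dependent cutoff $\alpha_2=\tau_n^{1/32}$ together with the crude measure bound $\iint_{\{\alpha>\alpha_2\}}\frac{dx\,dy}{(1-x^2)(1-y^2)}=O(\log(1/\tau_n)\log n)$---is more direct and bypasses the analyticity input entirely; it needs only that $f_\tau$ is bounded and $O(u^{2\tau+1})$ near $0$. Conversely, the paper's substitution $x=\tanh t$, $y=\tanh s$ yields the \emph{exact} identity $\alpha=\sech^2(s-t)$, so no regularity of $f_\tau$ is needed to pass from $f_\tau(\alpha)$ to $f_\tau(\sech^2 v)$; your approximate substitution $x=1-e^{-p}$ forces the extra H\"older step. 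Two minor points on that step: Lemma~\ref{l.f-fact} does not state H\"older continuity of $f_\tau$ explicitly (it is derivable from analyticity away from the endpoints together with the endpoint expansions), and the behaviour $f_\tau(u)=-1+O(\sqrt{1-u})$ near $u=1$ gives exponent $1/2$ there, so your $\min(1,2\tau+1)$ should read $\min(1/2,2\tau+1)$; the conclusion is of course unaffected since $a_n$ decays faster than any power of $\log n$.
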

\begin{proof} 
We start with \eqref{var-in}. Let $\varepsilon'>0$ be arbitrary. It suffices to show that for $n$ large enough (depending on $\varepsilon'$), the following holds
    \[
    |\Var[\widetilde{N}_n(S_n)] - \kappa_\tau \log n| = O(\varepsilon' \log n).
    \]

By \eqref{mean} and \eqref{rho1-in}, 
\begin{align*}
    \mathbb E[\widetilde{N}_n (S_n)]
    =\int_{S_n}\frac{\sqrt{2\tau+1}}{\pi}\frac{1}{1-x^2}\left(1+o(1)\right) dx=\left(\frac{\sqrt{2\tau+1}}{2\pi}+o(1)\right)\log n.
\end{align*}
Now, using the change of variables $x=\tanh t$ and $y=\tanh s$, we see that 
    \[
    \frac{dxdy}{(1-x^2)(1-y^2)}=dtds \quad \text{and}\quad \alpha=\sech^2(s-t), \quad  (t,s)\in J_n\times J_n,
    \]
where 
    \[
    J_n:=\begin{cases}
    (\frac 12 \log \frac{2-a_n}{a_n},\frac 12\log\frac{2-b_n}{b_n}) &\text{if}\quad S_n=I_n,\\
    (\frac 12 \log \frac{b_n}{2-b_n},\frac 12\log\frac{a_n}{2-a_n})&\text{if}\quad S_n=-I_n,
    \end{cases}
    \]
and it is clear that
    \[
    |J_n|=\frac{1}{2}\log n-\log^{\frac d4}n+o(1).
    \]

Recall the constant $\alpha_0>0$ from Lemma~\ref{l.cor-asymp}. Then there is a constant $M_0>0$ such that $\sech(t)< \alpha_0$ is equivalent to $|t|> M_0$. It follows that
\begin{align*}
    &\iint_{\{S_n\times S_n: \alpha< \alpha_0\}}|\rho_n^{(2)}(x,y)-\rho_n^{(1)}(x)\rho_n^{(1)}(y)|dxdy\\
    &= O\bigg(\iint_{\{J_n\times J_n: |s-t| > M_0\}}\sech^{4\tau+2}(s-t)dsdt\bigg)\\
    &=  O\bigg(\int_{M_0}^{|J_n|}(|J_n|-v)\sech^{4\tau+2}(v)dv\bigg)\\
    &= |J_n| O\left(\int_{M_0}^{\infty}\sech^{4\tau+2}(v)dv\right) + O(1).
\end{align*}
 We can now refine $\alpha_0$ (making it smaller) so that
    \[
    \int_{M_0}^\infty\sech^{4\tau+2}(v)dv  <\varepsilon',
    \]
and  it follows that
\begin{equation}\label{e.var-in-smallapha}
    \iint_{\{S_n\times S_n: \alpha<\alpha_0\}}|\rho_n^{(2)}(x,y)-\rho_n^{(1)}(x)\rho_n^{(1)}(y)|dxdy = O(\varepsilon' \log n). 
\end{equation}

We now estimate the integral
    \[
    \iint_{\{S_n\times S_n: \alpha\ge \alpha_0\}}|\rho_n^{(2)}(x,y)-\rho_n^{(1)}(x)\rho_n^{(1)}(y)|dxdy.
    \]
We partition the integration region into two subsets, $(I)$ and $(II)$, defined as follows:
\begin{align*}
    (I)&:=\{(x,y)\in S_n\times S_n: \alpha\ge \alpha_0, |f_{\tau}(\alpha)| < \sqrt[32]{\tau_n}\}, \\
    (II)&:=\{(x,y)\in S_n\times S_n: \alpha\ge \alpha_0, |f_{\tau}(\alpha)| \ge \sqrt[32]{\tau_n}\}.
\end{align*}
We will use the same change of variable $x=\tanh t$ and $y=\tanh s$, so that $\alpha=\sech^2(s-t)$. 

For $(I)$, using Lemma~\ref{l.f-fact}, it is clear that the set $E=\{u\in  [\alpha_0,1]: |f(u)| < \sqrt[32]{\tau_n}\}$ can be covered by a union of finitely many subintervals of $[\alpha_0,1]$, each with a length of $o(1)$. Let $F=\{v: \sech^2(v)\in E\}$. Then it is clear that $F$ may be covered by a union of finitely many intervals, each with a length of $o(1)$. (The implicit constant may depend on $\alpha_0$). Thus, using the boundedness of $f_\tau$ and Lemma~\ref{l.cor-asymp}, we have
\begin{align*}
    \iint_{(I)} |\rho_n^{(2)}(x,y)-\rho_n^{(1)}(x)\rho_n^{(1)}(y)|dxdy  &\le O\bigg(\iint_{(I)} \rho_n^{(1)}(x)\rho_n^{(1)}(y)dxdy\bigg)  \\
    &\le \iint_{\{J_n\times J_n: |s-t|\in F\}}O(1) dsdt \\
    &= o(|J_n|)\\
    &=o(\log n).
\end{align*}

For $(II)$, we note that  
    \[
    (1+f_\tau(\alpha))(1+O(\sqrt[16]{\tau_n}))=1+f_\tau(\alpha)+O(|f_\tau(\alpha)|\sqrt[32]{\tau_n}).
    \]
Thus, using Lemma~\ref{l.cor-asymp}, the above estimates (for region $(I)$), and the boundedness of $f_\tau$, we obtain
\begin{align*}
    &\iint_{(II)} \rho_n^{(2)}(x,y)-\rho_n^{(1)}(x)\rho_n^{(1)}(y)dxdy \\
    &=\iint_{(II)}  \Big(f_\tau(\alpha) +O(\sqrt[32]{\tau_n})|f_\tau(\alpha)|\Big) \rho_n^{(1)}(x)\rho_n^{(1)}(y)dxdy \\
    &=o(\log n)+\iint_{\{S_n\times S_n: \alpha\ge \alpha_0\}}  \Big(f_\tau(\alpha) +O(\sqrt[32]{\tau_n})|f_\tau(\alpha)|\Big) \rho_n^{(1)}(x)\rho_n^{(1)}(y)dxdy.
\end{align*}
Making the change of variables $x=\tanh t$ and $y=\tanh s$ again, from Lemma~\ref{l.f-fact}, we know that $\int_{0}^{\infty}|f_{\tau}(\sech^2v)|dv$ and $\int_{0}^{\infty}v|f_{\tau}(\sech^2v)|dv$ both converge. It follows that
\begin{align*}
    &\iint_{\{S_n\times S_n: \alpha\ge \alpha_0\}}  f_\tau(\alpha)   \rho_n^{(1)}(x)\rho_n^{(1)}(y)dxdy\\
    &=\frac{2\tau+1}{\pi^2} \iint_{\{J_n\times J_n: |t-s|\le M_0\}}  f_\tau(\sech^2(t-s)) dsdt\\
    &=\frac{2(2\tau+1)}{\pi^2}\int_{0}^{M_0}(|J_n|-v)f_{\tau}(\sech^2v)  dv \\
    &=\frac{2\tau+1}{\pi^2}\left(\int_{0}^{\infty}f_{\tau}(\sech^2 v)dv+O(\varepsilon')\right) \log n + o(\log n).
\end{align*}

A similar computation works for $|f_\tau|$, giving a contribution of order 
    \[
    O(\sqrt[32]{\tau_n}\log n)=o(\log n),
    \]
and we obtain
\begin{align*}
    &\iint_{\{S_n\times S_n:\alpha \ge \alpha_0\}}\Big(\rho^{(2)}_n(x,y)-\rho^{(1)}_n(x)\rho^{(1)}_n(y)\Big)dxdy\\
    &=\left(\frac{2\tau+1}{\pi^2}\int_0^\infty f_{\tau}(\sech^2 v)dv +O(\varepsilon')\right) \log n.
\end{align*}

Combining these with \eqref{e.var-in-smallapha} and \eqref{var}, we get 
\begin{align*}
    \Var[\widetilde{N}_n(S_n)]=\left(\frac{2\tau+1}{\pi}\int_{0}^{\infty} f_{\tau}(\sech^2v)dv+\frac{\sqrt{2\tau+1}}{2}+O(\varepsilon')\right)\frac{1}{\pi}\log n,
\end{align*}
for any $\varepsilon'>0$, which gives \eqref{var-in}, recalling the definition of $\kappa_\tau$ in \eqref{kappa}. 

We now prove \eqref{var-in-cup}. Using \eqref{rho2-in-mix}, we get 
\begin{align*}
\Cov[\widetilde{N}_n (-I_n),\widetilde{N}_n (I_n)]
    &=\int_{-I_n}dx\int_{I_n}\left(\rho^{(2)}_n(x,y)-\rho^{(1)}_n(x)\rho^{(1)}_n(y)\right)dy\\
    &=\iint_{I_n\times I_n}\Big(\rho^{(2)}_n(-x,y)-\rho^{(1)}_n(-x)\rho^{(1)}_n(y)\Big)dxdy\\
    &=\iint_{I_n\times I_n}\frac{2\tau+1}{\pi^2}\frac{\alpha^{2\tau+1}(x,y)}{(1-x^2)(1-y^2)} o(1)dxdy\\
    &=o(1)\int_{0}^{|J_n|}(|J_n|-v)(\sech^2v)^{2\tau+1}dv\\
    &=o(1)\log n.
\end{align*}
Combining with \eqref{var-in} and 
    \[
    \Var[\widetilde{N}_n(-I_n\cup I_n)]=\Var[\widetilde{N}_n(-I_n)]+\Var[\widetilde{N}_n(I_n)]+2\Cov[\widetilde{N}_n(-I_n), \widetilde{N}_n(I_n)],
    \]
we deduce \eqref{var-in-cup} as desired.
\end{proof}

\begin{lemma} \label{l.var-out}
It holds that 
    \[
    \Var[\widetilde{N}_n (T_n)]=\left[\frac{1}{\pi}\left(1-\frac{2}{\pi}\right)+o(1)\right]\log n,\quad T_n\in \{-I_n^{-1},I_n^{-1}\},
    \]
and 
    \[
    \Var[\widetilde{N}_n (-I_n^{-1}\cup I_n^{-1})]=\left[\frac{2}{\pi}\left(1-\frac{2}{\pi}\right)+o(1)\right]\log n.
    \]
\end{lemma}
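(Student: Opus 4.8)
The plan is to reduce everything to the reciprocal polynomial $Q_n(x)=\frac{x^n}{c_n}\widetilde{P}_n(1/x)$ and then to rerun, with the single change $\tau\rightsquigarrow 0$, the argument already carried out for Lemma~\ref{l.var-in}. The map $x\mapsto 1/x$ is a homeomorphism of $\mathbb R\setminus\{0\}$ that sends $I_n^{-1}$ onto $I_n$ and $-I_n^{-1}$ onto $-I_n$ (note $0\notin\pm I_n^{-1}$, and the endpoints carry no roots almost surely), and it induces a multiplicity-preserving bijection between the real roots of $\widetilde{P}_n$ in $T_n\in\{-I_n^{-1},I_n^{-1}\}$ and the real roots of $Q_n$ in the corresponding set $S_n\in\{-I_n,I_n\}$. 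Hence, almost surely, $\widetilde{N}_n(T_n)=\widetilde{N}_{Q_n}(S_n)$ and $\widetilde{N}_n(-I_n^{-1}\cup I_n^{-1})=\widetilde{N}_{Q_n}(-I_n\cup I_n)$, so it is enough to estimate $\Var[\widetilde{N}_{Q_n}(S_n)]$ together with the cross-covariance $\Cov[\widetilde{N}_{Q_n}(-I_n),\widetilde{N}_{Q_n}(I_n)]$.

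For this I would apply the Kac--Rice formula \eqref{var} to $Q_n$ and feed in Lemma~\ref{l.cor-asymp-out}, which supplies exactly the $\tau=0$ analogues of the inputs used in Lemma~\ref{l.var-in}: the one-point density $\rho^{(1)}_{Q_n}(x)=\frac1\pi\frac{1}{1-x^2}(1+O(e_n))$; the two-point formula $\rho^{(2)}_{Q_n}(x,y)=\frac{1}{\pi^2}\frac{1+f_0(\alpha)}{(1-x^2)(1-y^2)}(1+O(\sqrt[16]{e_n}))$; the small-$\alpha$ bound $\rho^{(2)}_{Q_n}(x,y)-\rho^{(1)}_{Q_n}(x)\rho^{(1)}_{Q_n}(y)=O(\alpha)/\big((1-x^2)(1-y^2)\big)$ valid for $\alpha\le\alpha_1$; and the mixed-sign bound $\rho^{(2)}_{Q_n}(-x,y)-\rho^{(1)}_{Q_n}(-x)\rho^{(1)}_{Q_n}(y)=\frac{1}{\pi^2}\frac{\alpha}{(1-x^2)(1-y^2)}\,o(1)$. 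With these in hand, the whole computation is line-for-line that of the proof of Lemma~\ref{l.var-in} under the substitutions $\tau\to 0$, $\tau_n\to e_n$, $\rho^{(i)}_n\to\rho^{(i)}_{Q_n}$, $f_\tau\to f_0$, $\alpha_0\to\alpha_1$; in particular Lemma~\ref{l.f-fact} with $\tau=0$ furnishes the boundedness of $f_0$, the $O(u)$ behaviour at $0$, the finiteness of its zero set, and the convergence of $\int_0^\infty|f_0(\sech^2 v)|\,dv$ and $\int_0^\infty v\,|f_0(\sech^2 v)|\,dv$ needed along the way.

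Concretely, one makes the substitution $x=\tanh t$, $y=\tanh s$ on $S_n\times S_n$, so that $\frac{dx\,dy}{(1-x^2)(1-y^2)}=dt\,ds$, $\alpha=\sech^2(s-t)$, and the image $J_n$ of $S_n$ has $|J_n|=\tfrac12\log n-\log^{d/4}n+o(1)$. One then splits the square into: the region $\alpha<\alpha_1$, whose contribution is $O\big(|J_n|\int_M^\infty\sech^2 v\,dv\big)+O(1)=O(\varepsilon'\log n)$ after shrinking $\alpha_1$ so that $\int_M^\infty\sech^2 v\,dv<\varepsilon'$ (with $M=M(\alpha_1)$); the region $\{\alpha\ge\alpha_1,\ |f_0(\alpha)|<\sqrt[32]{e_n}\}$, which by Lemma~\ref{l.f-fact} corresponds to $|s-t|$ lying in a finite union of intervals of length $o(1)$, hence is $o(\log n)$; and the remaining region, on which $\rho^{(2)}_{Q_n}(x,y)-\rho^{(1)}_{Q_n}(x)\rho^{(1)}_{Q_n}(y)$ equals $\frac{1}{\pi^2}\frac{f_0(\alpha)+O(\sqrt[32]{e_n})|f_0(\alpha)|}{(1-x^2)(1-y^2)}$ and integrates to $\big(\frac1\pi\int_0^\infty f_0(\sech^2 v)\,dv+O(\varepsilon')\big)\frac1\pi\log n$. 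Adding $\mathbf E[\widetilde{N}_{Q_n}(S_n)]=(\tfrac1{2\pi}+o(1))\log n$ (the $\sqrt{2\tau+1}$ factor equals $1$), this gives
\[
\Var[\widetilde{N}_{Q_n}(S_n)]=\Big(\frac1\pi\int_0^\infty f_0(\sech^2 v)\,dv+\frac12+O(\varepsilon')\Big)\frac1\pi\log n=(\kappa_0+o(1))\log n,
\]
and by the Remark after Theorem~\ref{t.main} (where $\int_0^\infty f_0(\sech^2 v)\,dv=\tfrac\pi2-2$) we have $\kappa_0=\frac1\pi(1-\frac2\pi)$, which is the first claim. For the second claim, the mixed-sign estimate of Lemma~\ref{l.cor-asymp-out} gives, after the same substitution, $\Cov[\widetilde{N}_{Q_n}(-I_n),\widetilde{N}_{Q_n}(I_n)]=o(1)\int_0^{|J_n|}(|J_n|-v)\sech^2 v\,dv=o(\log n)$, whence $\Var[\widetilde{N}_n(-I_n^{-1}\cup I_n^{-1})]=\Var[\widetilde{N}_{Q_n}(-I_n)]+\Var[\widetilde{N}_{Q_n}(I_n)]+o(\log n)=\big(\tfrac2\pi(1-\tfrac2\pi)+o(1)\big)\log n$.

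I do not anticipate a real obstacle here: all the hard analysis (the correlation-function asymptotics and the structural facts about $f_0$) is already packaged in Lemma~\ref{l.cor-asymp-out} and Lemma~\ref{l.f-fact}, so this lemma is essentially the specialization $\tau=0$ of Lemma~\ref{l.var-in} read through the reciprocal map. The only genuinely new ingredient is the bookkeeping that $x\mapsto 1/x$ converts root-counting for $\widetilde{P}_n$ on $\pm I_n^{-1}$ into root-counting for $Q_n$ on $\pm I_n$; the mildest subtlety is checking that the error parameter $e_n$ from \eqref{e.en}, rather than $\tau_n$, legitimately plays the role of the small parameter throughout, which is precisely the content of Lemma~\ref{l.cor-asymp-out}.
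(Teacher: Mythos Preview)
Your proposal is correct and follows precisely the approach the paper takes: the paper's own proof consists of a single sentence stating that the argument is ``entirely similar to the proof of Lemma~\ref{l.var-in} presented above (specialized to the case $\tau=0$), making use of Lemma~\ref{l.cor-asymp-out}.'' You have spelled out in detail exactly this specialization---passing to the reciprocal polynomial $Q_n$, invoking Lemma~\ref{l.cor-asymp-out} for the correlation-function inputs with $e_n$ in place of $\tau_n$, rerunning the hyperbolic change of variables and the three-region decomposition, and identifying $\kappa_0=\frac{1}{\pi}\big(1-\frac{2}{\pi}\big)$ via the Remark after Theorem~\ref{t.main}.
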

\begin{proof} The proof follows the same argument as that of Lemma~\ref{l.var-in}, specialized to the case $\tau=0$, and relies on Lemma~\ref{l.cor-asymp-out}.
\end{proof}

\begin{appendix}
\section{}  \label{a.example}
This appendix provides an example showing that the asymptotic condition \eqref{e.polyasymp} does not imply the concentration condition \eqref{e.OV}.

Consider the sequence $c_j =j^\tau\big(1+\frac{(-1)^{j}}{\log j}\big)$, which satisfies \eqref{e.polyasymp}, but we will show that it does not satisfy \eqref{e.OV}. 

First, observe that for $n-e^{(\log n)^{1/5}} \ge j\ge n-ne^{-(\log n)^{1/5}}$, we have
    \[
    \frac {j^\tau}{n^\tau}= \bigg(1+o\bigg(\frac 1{\log n}\bigg)\bigg)^\tau = 1+o\left(\frac 1{\log n}\right).
    \]
Now, if $n-j$ is odd, then we claim that
    \[
    \bigg|\frac{1+\frac{(-1)^j}{\log j}}{1+\frac{(-1)^n}{\log n}} -1\bigg|\ge \frac 1{\log n}.
    \]
To see this, consider two cases. If $n$ is even, then 
    \[
    \frac{1-\frac 1{\log j}}{1+\frac 1{\log n}} \le 1-\frac 1{\log j}\le 1-\frac 1{\log n}.
    \]
If $n$ is odd, then
    \[
    \frac{1+\frac 1{\log j}}{1-\frac 1{\log n}} \ge 1+\frac 1{\log j}\ge 1+\frac 1{\log n}.
    \]
Thus, the claim holds. It follows that for large $n$,
    \[
    \bigg|\frac {|c_j|}{|c_n|} -1\bigg| = \bigg|\frac {j^\tau}{n^\tau}\frac{1+\frac{(-1)^j}{\log j}}{1+\frac{(-1)^n}{\log n}}-1\bigg|\ge \frac 1 2 \frac 1{\log n}=\frac 1 2 e^{-\log\log n},
    \]
so condition~\eqref{e.OV} is not satisfied.

We remark that this example can be modified to show that the asymptotic condition \eqref{e.polyasymp} does not imply conditions similar to \eqref{e.OV}, where one requires a decay estimate for $\left|\frac{|c_j|}{|c_n|}-1\right|$ (as $n\to\infty$) that is stronger than the uniform decay rate for $|\frac{j^\tau}{n^\tau}-1|$ (over the range of $j$ under consideration).
\end{appendix}

\subsection*{Acknowledgments}
The authors thank the two anonymous referees for their insightful comments and suggestions, which substantially improved the paper.
\subsection*{Funding statement}
The first author has been supported in part by NSF grant DMS-1800855.


\end{document}